\documentclass[10pt,hyperref,reqno,a4paper]{amsart}
\usepackage{graphicx} 
\usepackage{float} 
\usepackage{amsmath}
\usepackage{color}
\usepackage{amsfonts,amsthm,amssymb}
\usepackage{mathtools}
\usepackage{bm} 
\usepackage[colorlinks,linkcolor=cyan,citecolor=cyan]{hyperref}
\usepackage{extarrows}
\usepackage[hang,flushmargin]{footmisc} 
\usepackage{mathrsfs} 
\usepackage[font=footnotesize,skip=0pt,textfont=rm,labelfont=rm]{caption,subcaption} 
\usepackage{booktabs} 

\def\eqdefa{\buildrel\hbox{\footnotesize def}\over =}

\def\S{\mathbb{S}}
\def\R{\mathbb{R}}
\def\T{\mathbb{T}}

\newcommand{\1}{\mathbf{1}}

\newcommand{\g}{\bm{g}}
\newcommand{\h}{\bm{h}}

\renewcommand{\L}{\mathbf{L}}

\renewcommand{\S}{\mathbb{S}}

\newcommand{\ga}{\gamma}

\newcommand{\pa}{\partial}
\newcommand{\na}{\nabla}

\newcommand{\al}{\alpha}

\newcommand{\lam}{\lambda}

\renewcommand{\>}{\rangle}

\renewcommand{\P}{\bm{P}}
\newcommand{\PP}{\bm{P}_{\pm}}

\allowdisplaybreaks
\theoremstyle{plain}
\newtheorem{thm}{Theorem}[section]
\newtheorem{cor}[thm]{Corollary}
\newtheorem{lem}[thm]{Lemma}
\newtheorem{rmk}[thm]{Remark}

\usepackage[]{geometry}                           
\usepackage{fullpage}
\numberwithin{equation}{section} 


\setcounter{tocdepth}{1}
\usepackage{refcheck}
\newcommand{\vertiii}{{\vert\kern-0.25ex\vert\kern-0.25ex\vert}}

\begin{document}
\title{The Vlasov-Maxwell-Boltzmann/Landau system with polynomial perturbation near Maxwellian}
\date{}



\author{Chuqi Cao}
\address[Chuqi Cao]{Yau Mathematical Science Center and Beijing Institute of Mathematical Sciences and Applications, Tsinghua University\\
Beijing 100084,  P. R.  China.} \email{chuqicao@gmail.com}

\author{Dingqun Deng}
\address[Dingqun Deng]{Department of Mathematics, Pohang University of Science and Technology, Pohang, Republic of Korea (South), ORCID: \href{https://orcid.org/0000-0001-9678-314X}{0000-0001-9678-314X}} \email{dingqun.deng@postech.ac.kr} \email{dingqun.deng@gmail.com}

\author{Xingyu Li}
\address[Xingyu Li] {Institut de Math\'ematiques de Toulouse , Universit\'e de Toulouse III Paul Sabatier, Toulouse, France.} \email{xingyuli92@gmail.com}
    

\subjclass[2020]{Primary: 35Q20; Secondary 76X05, 82C40, ??}

\keywords{Vlasov-Maxwell-Boltzmann/Landau system equation, ??}

\begin{abstract}
     In this paper, we study the Vlasov-Maxwell-Boltzmann system without angular cutoff and the Vlasov-Maxwell-Landau/Boltzmann system with polynomial perturbation $F=\mu+f$ near global Maxwellian. In particular, we prove the global existence, uniqueness and
large time behavior for solutions in a polynomial weighted space $H^N_{x,v}(\langle v\rangle^k)$. The method is based on Duhamel's principle with the crucial time-decay analysis on the particle distribution $f$ and the electromagnetic field $(E,B)$.

\end{abstract}

\maketitle
\tableofcontents

\section{Introduction}

The Vlasov-Maxwell-Boltzmann equation reads
\begin{equation}
\label{VMB1}
\begin{cases}
	\pa_tF_+ + v\cdot \na_xF_++ (E+v\times B)\cdot\na_vF_+ = Q(F_+,F_+)+Q(F_-,F_+),\\
	\pa_tF_- + v\cdot \na_xF_- -(E+v\times B)\cdot\na_vF_- = Q(F_+,F_-)+Q(F_-,F_-),\\
	F_\pm(0,x,v)=F_{0,\pm}(x,v),\quad E(0,x)=E_0(x), \quad B(0,x)=B_0(x)
\end{cases}
\end{equation}
where $E, B$ satisfies 
\begin{equation}\left\{
\begin{aligned}
\label{VMB2}
&\partial_t E - \nabla_x \times B = -\int_{\R^3} v(F_+-F_{-}) dv , \quad \partial_t B + \nabla_x \times E =0,\\
&\nabla_x \cdot E = \int_{\R^3} (F_+-F_{-}) dv, \quad  \nabla_x \cdot B =0. 
\end{aligned}\right.
\end{equation}
Here $F(t, x, v) \ge 0$ is the distribution function of colliding particles at time $t>0$ and position $x \in \T^3$, moving with at velocity $v \in \R^3$, and $[E(t,x), B(t,x)]$ is the self-consistent, spatially periodic electromagnetic field coupled to $F(t,x,v)$.

\subsubsection{Boltzmann collision operator}
The Boltzmann collision operator $Q$ is a bilinear operator which acts only on the velocity variable $v$ given by 
\[
 Q(G,F)(v)=\int_{\R^3}\int_{\mathbb{S}^2}B(v-v_*,\sigma )(G'_*F'-G_*F)d\sigma dv_*.
\]
Let us give some explanations on the collision operator.
\begin{enumerate}
\item  We use the standard shorthand $F=F(v),G_*=G(v_*),F'=F(v'),G'_*=G(v'_*)$, where $v',v'_*$ are given by
\[
v'=\frac{v+v_*}{2}+\frac{|v-v_*|}{2}\sigma, \quad v_*'=\frac{v+v_*}{2}-\frac{|v-v_*|}{2}\sigma, \quad \sigma\in\mathbb{S}^2.
\]
 This representation follows the parametrization of the set of solutions of the physical law of elastic collision:
\[
  v+v_*=v'+v'_*,  \quad
  |v|^2+|v_*|^2=|v'|^2+|v'_*|^2.
\]

\item  The nonnegative function $B(v-v_*,\sigma)$ in the collision operator is called the Boltzmann collision kernel. It is  assumed depending  only on $|v-v_*|$ and the deviation angle $\theta$ through \[\cos\theta\eqdefa \frac{v-v_*}{|v-v_*|}\cdot\sigma .\]
\item  In the present work,  our {\bf basic assumptions on the kernel $B$}  can be concluded as follows:
\begin{itemize}
  \item[$\mathbf{(A1).}$] The Boltzmann kernel $B$ takes the form
$B(v-v_*,\sigma)=|v-v_*|^\gamma b(\frac{v-v_*}{|v-v_*|}\cdot\sigma )$,
 where   $b$ is a nonnegative function.

  \item[$\mathbf{(A2).}$] The angular function $b(\cos \theta)$ is not locally integrable and it satisfies
\[
  \mathcal{K}\theta^{-1-2s}\leq \sin\theta b(\cos\theta) \le \mathcal{K}^{-1}\theta^{-1-2s},~\mbox{with}~0<s<1,~\mathcal{K}>0.
\]

  \item[$\mathbf{(A3).}$]
  The parameter $\gamma$ and $s$ satisfy the condition $-3 < \gamma  \le 1$ and $s \ge 1/2.$

  \item[$\mathbf{(A4).}$]  Without lose of generality, we may assume that $B(v-v_*,\sigma )$ is supported in the set $0\leq \theta \le \pi/2$, i.e.$\frac{v-v_*}{|v-v_*|}   \cdot  \sigma  \ge 0$, for otherwise $B$ can be replaced by its symmetrized form:
\[
\overline{B}(v-v_*,\sigma )=|v-v_*|^\gamma\big(b(\frac{v-v_*}{|v-v_*|}\cdot\sigma) + b(\frac{v-v_*}{|v-v_*|}\cdot(-\sigma ))\big)  \mathrm{1}_{\frac{v-v_*}{|v-v_*|}\cdot\sigma \ge 0},
\]
where $\mathrm{1}_A$ is the characteristic function of the set $A$.
\end{itemize}
\end{enumerate}

\subsubsection{Landau collision operator}
We also consider \eqref{VMB1} as the Vlasov-Maxwell-Landau equation, and $Q$ as the Landau operator
\begin{equation}
\begin{aligned}
\label{vpl3}
Q(g, f)(v)&=\nabla_v\cdot\int_{\R^3}   \Phi(v-v_*)(g(v_*)  \nabla_v  f(v)-f(v)  \nabla_{v_*}g(v_*)) dv_*,\\
&\text{with}\,\, \Phi(u)=|u|^{\gamma+2}\left(I-\frac{u\otimes u}{|u|^2}\right),\,\, \gamma\in[-3,1].
\end{aligned}
\end{equation}
The Landau operator \eqref{vpl3} can be equivalently written in the form
\[
Q(g, f)(v)  = \partial_i  \int_{\R^3}  a_{ij} (v-v_*) (g_* \partial_j f - f  \partial_j g_*  )    dv_*,
\]
where we use the convention of summation of repeated indices, and the derivatives are in the velocity variable $\partial_i = \partial_{v_i}$. Hereafter we use the shorthand notations $g_* = g(v_*), f = f(v), \partial_j g_* = \partial_{v_{*j}}g(v_*), \partial_j f = \partial_{v_j} f(v) $,
and 
\[
a_{i j} (v) = |v|^{\gamma+2} \left( \delta_{ij} - \frac {v_i v_j}  {|v|^2}\right), \quad -3 \le \gamma \le 1. 
\]
Then we define
\begin{equation*}
\begin{aligned}
&b_i(v) = \partial_j a_{ij} (v) = -2|v|^\gamma v_i,\\
& c(v) = \partial_{ij} a_{ij}(v) = -2(\gamma + 3) |v|^\gamma,  \quad\text{ when }-3 < \gamma \le 1,\\
& c(v) = - 8 \pi \delta_0,  \quad\text{ when } \gamma=-3,
\end{aligned}
\end{equation*}
Then we have $Q(g, f) = \partial_i \big( (a_{ij} *g )\partial_j f - (b_i * g)  f   \big) = (a_{ij} *g ) \partial_{ij} f -(c * g) f.$
\begin{rmk}
We will call the cases $\gamma\ge 0$ and $\gamma<0$ hard potential and soft potential respectively.
\end{rmk}
\subsection{Basic properties and the perturbation equation} We recall some basic facts on the Boltzmann/Landau equation.
 \smallskip

\noindent$\bullet$ {\bf Conservation Law.}  Formally if $F$ is the solution to \eqref{VMB1} with the initial data $F_0$, then it enjoys the conservation of mass, momentum, and energy, that is,
\begin{equation}
\label{conse1}
\frac d {dt} \int_{\T^3} \int_{  \R^3} F_+ dv dx = \frac d {dt} \int_{\T^3} \int_{  \R^3} F_{-} dv dx =0, \quad\frac d {dt}\int_{\T^3} \int_{  \R^3} v( F_++F_{-}) dv dx =0, \,\,
\end{equation}
\begin{equation}
\label{conse2a}
 \frac d {dt} \int_{\T^3} \int_{  \R^3} |v|^2( F_++F_{-}) dv dx+\frac {d} {dt}  \int_{\T^3} |E  (t, x)|^2 +  |B  (t, x)|^2dx=0.
 \end{equation}
Taking the inner product of \eqref{VMB1} with 1 over $\mathbb R^3_v$, we get the continuity equation 
\[
\frac {\partial} {\partial t} \int_{\R^3} F_\pm dv + \nabla_x \cdot \int_{\R^3} v F_\pm dv =0,
\]
For simplicity, we assume the initial data $F_0 $ is normalized such that  the  equilibrium associated with the equation \eqref{VMB1} will be the standard Gaussian function, i.e.
\[
\label{DefM} \mu(v)\eqdefa (2\pi)^{-3/2} e^{-|v|^2/2}, 
\] which enjoys the same mass, momentum, and energy as $F_0$.
\smallskip

\noindent$\bullet$ {\bf Perturbation Equation.} In the perturbation framework, let $f=[f_+,f_-]$ be the perturbation such that
\[
F_\pm=\mu+f_\pm.
\]
Then the equation \eqref{VMB1} becomes
\begin{equation}\label{VMB3}
\partial_t f_\pm+v \cdot \nabla_x f_\pm \pm (E + v \times B) \cdot \nabla_v f_\pm \mp ( E  \cdot v)   \mu =Q(f_\pm+f_\mp,\mu)+Q(2\mu+f_\pm+f_\mp, f_\pm)
\end{equation}
where $E, B$ satisfies 
\begin{equation}\label{VMB4}
\partial_t E - \nabla_x \times B = -\int_{\R^3} v (f_+-f_{-}) dv:=G, \quad \partial_t B +  \nabla_x \cdot E =0,\quad
\nabla_x \cdot E = \int_{\R^3} (f_+-f_{-}) dv   , \quad  \nabla_x \cdot B =0,
\end{equation}
and we define the linearized operators $L=[L_+, L_{-}], \mathcal L=[\mathcal L_+, \mathcal L_{-}]$ by 
\[L_\pm f=2Q(\mu, f_\pm)+Q(f_++f_{\mp},\mu),\quad \mathcal Lf=-v\cdot\nabla_xf\mp E\cdot v\mu+Lf
\]
and the nonlinear collision operator $\Gamma=[\Gamma_+, \Gamma_{-}]$ by
\[
\Gamma_{\pm}(f,g)=Q(g_\pm+g_\mp, f_{\pm}).
\]
By assuming $F_0$ have the same mass, total momentum and total energy as $\mu$, the conservation law \eqref{conse1} turns to
\begin{equation}
\label{conse2}
\begin{aligned}
\int_{\T^3} \int_{ \R^3}  f_\pm dv dx  =0,\,\,\int_{\T^3} \int_{ \R^3} v (f_++f_{-})dv dx = 0,\\\int_{\T^3} \int_{ \R^3} |v|^2 (f_++f_{-}) dv dx  +  \int_{\T^3} |E (t, x)|^2  +|B(t, x)|^2 dx=0.
\end{aligned}
\end{equation}
Then the continuity equation is 
\[
\frac {\partial} {\partial t} \int_{\R^3}  f_\pm( v) dv + \nabla_x \cdot \int_{\R^3} v f_\pm(v)  dv =0.
\]
and the kernel of $L$ is 
\[
\ker(L) = \text{span} \{[1,0]\mu,[0,1]\mu,[1,1]v_1\mu,[1,1]v_2 \mu,[1,1] v_3 \mu,[1,1] |v|^2 \mu \}, 
\]
where $[\cdot,\cdot]$ is the vector in $\R^2$. 
Then we define the projection onto $\ker(L)$ by
\begin{align}\label{projection}\notag
	\PP f &= \left(\int_{\R^3}f_\pm dv\right) \mu + \sum_{i=1}^3\left(\int_{\R^3}v_i(f_++f_-) dv  \right)v_i\mu + \left(\int_{\R^3} \frac {|v|^2-3}{6}(f_++f_-)dv\right)(|v|^2 -3)\mu\\
 &=:\Big(a_\pm(t,x)+v\cdot b(t,x)+(|v|^2-3)c(t,x)\Big)\mu,
\end{align}
or equivalently by 
\[
\P f = \Big(a_+(t,x)[1,0]+a_-(t,x)[0,1]+v\cdot b(t,x)[1,1]+(|v|^2-3)c(t,x)[1,1]\Big)\mu. 
\]

\subsection{ Notations and function spaces} 

\noindent $\bullet$ Let the multi-indices $\alpha$ and $\beta$ be $\alpha=[\alpha_1,\alpha_2,\alpha_3]$, $\beta=[\beta_1,\beta_2,\beta_3]$ and $\partial^\alpha_{\beta}:=\partial^{\alpha_1}_{x_1}\partial^{\alpha_2}_{x_2}\partial^{\alpha_3}_{x_3}\partial^{\beta_1}_{v_1}\partial^{\beta_2}_{v_2}\partial^{\beta_3}_{v_3}$. If each component of $\theta$ is not greater than that of $\overline\theta$, we denote it by $\theta\le\overline\theta$; $\theta<\overline\theta$ means $\theta\le\overline\theta$, $|\theta|<|\overline\theta|$.\\
\noindent $\bullet$ We use $C$ to denote a universal constant that may change from line to line. The notation $a\lesssim b$ means $a\le Cb$, and $a\sim b$ means both $a\lesssim b$ and $b\lesssim a$ hold. We denote $C_{a_1,a_2,\cdots, a_n}$ by a constant depending on parameters $a_1,a_2,\cdots, a_n$. We also use the parameter $\epsilon$ to represent different positive numbers that are much smaller than 1 and determined in different cases.\\
\noindent $\bullet$ We define $\langle v\rangle :=(2+|v|^2)^{1/2}$, so $\log\langle v\rangle\ge \log2>0$. The lower bound of $\log\langle v\rangle$ will be used in the definition of the weight function.\\
\noindent $\bullet$ $(a(D)f)(v):= \frac{1} {(2\pi)^3}  \int_{\R^3}\int_{\R^3}e^{i(v-u) \xi }  a(\xi) f(u) du d\xi$ is the pseudo-differential operator with the symbol $a(\xi)$.\\
\noindent $\bullet$ For the linear operator $L$, we write $S_L$ as the semigroup generated by $L$.\\
\noindent $\bullet$  For $m, l\in\mathbb R$, we define the weighted Sobolev space $H_l^m$ by
$H^m_l:=\{f(v) {\|}f|_{ H^m_l}=|\langle \cdot\rangle^l\langle D \rangle^m  f|_{L^2}<+\infty\}$.
\noindent $\bullet$ The space $L\log L$ is defined as $L\log L:=\Big\{f(v)|\|f\|_{L\log L}=\int_{\R^3}|f|\log(1+|f|)dv\Big\}$.


\noindent $\bullet$ The norm $\|\cdot\|_{H^\alpha_xH^m_l}$ is defined as
$\|f\|_{H^\alpha_xH^m_l}:=\left(\int_{\T^3}\|\langle D_x\rangle ^\alpha f(x,\cdot)  \|^2_{H^m_l}dx\right)^{1/2}$,
and $H^0_xH^m_l=L^2_xH^m_l$. 

\noindent $\bullet$ For Landau equation, let $m:=\langle v\rangle^k$ for some $k>0$. We define the velocity space $H^1_*(m)$ with the norm
\begin{equation}
\label{landauhm}
{\|}f{\|}^2_{H^1_*(m)}:={\|}f{\|}^2_{L^2_v(m\langle v\rangle^{\gamma/2})}+{\|}P_v\nabla_vf{\|}^2_{L^2_v(m\langle v\rangle^{\gamma/2})}+{\|}(I-P_v)\nabla_vf{\|}^2_{L^2_v(m\langle v\rangle^{(\gamma+2)/2})}
\end{equation}
here $P_v$ is projection onto $v$, namely $P_v\xi=\left(\xi\cdot\frac{v}{|v|}\right)\frac{v}{|v|}$.

\noindent $\bullet$ 
To describe the polynomial perturbation with variable $v$, we define the weight  function $w(\alpha,\beta)$ as
\begin{equation}
\label{weightw}
w(|\alpha|, |\beta|)=\langle v\rangle ^{k-a|\alpha|-b|\beta|+c}, \quad b=\frac{\max\{-2\gamma, 0\}}{s}+5, \quad a=b+\min\{\gamma, 0\},\quad c=2b+6
\end{equation}

\noindent $\bullet$ Let $(f, g)$ be the inner product of $f, g$ in $v$ variable $(f, g)_{L^2_v}$ for short, And we will use $(f, g)_{L^2_{x, v}}$  if the integral is both in $x, v$. We use $(f, g)_{L^2_k}:=(f, g  \langle v \rangle^{2k})_{L^2_v}$, and 
$\|f(\theta)\|_{L^1_\theta} = \int_{\S^2}f(\theta) d\sigma = 2\pi\int_0^{\pi}   f(\theta)  \sin\theta  d\theta$.

\noindent $\bullet$ We define vector $\bm g$ as $\bm g:=(f_+, f_{-}, E,B)$.\\
\noindent $\bullet$ We define the instant energy functional $E_{K,k}$ and dissipation rate functional $D_{K,k}$ respectively by
\begin{align}
\label{functional1}
\begin{aligned}
    E_{K, k}(f,E,B)(t)&:=E^f_{K,k}(t)+E^{eb}_{K}(t),\,\,\text{where}\,\,E^f_{K,k}(t):=\sum_{|\alpha| +|\beta| \le K} \Vert C_{|\alpha|, |\beta|}\partial^\alpha_\beta f w(\alpha, \beta) \Vert_{L^2_{x, v}}^2 , \,\, E^{eb}_{K}(t):=\Vert [E, B] \Vert_{H^K_x}^2,\\
    D_{K, k}(f,E,B)(t)&: = D^f_{K,k}(t)+E^{eb}_{K-1}(t),\,\,\text{where}\,\,D^f_{K,k}(t):=\sum_{|\alpha| +|\beta| \le K} \Vert C_{|\alpha|, |\beta|} \partial^\alpha_\beta f w(\alpha, \beta) \Vert_{L^2_{x}H^s_{\gamma/2}}^2 
\end{aligned}
\end{align}
where $C_{|\alpha|, |\beta|}>0$ satisfy
$C_{|a|,|b| } \gg C_{|a|, |c|}$ if $|a| < |b|$, $C_{|a| +1,|b|-1 } \gg C_{|a|, |b|}$ for any $a, b\in \mathbb{N}^+$. 
Moreover, we write $F_K, G_K$ as the functionals without weight ($k=0$):  
\begin{align}\label{functional2}
    \begin{aligned}
        F^f_{K, k}(f)(t)&: =\sum_{|\alpha| +|\beta| \le K} \Vert C_{|\alpha|, |\beta|}\partial^\alpha_\beta f w(\alpha, \beta) \Vert_{L^2_{x, v}}^2,\\
 G_{K}(f,E,B)(t)&: = G^{f}_{K}(t)+ E^{eb}_{K}(t),\,\,\text{where}\,\, G^f_{K}(t):=\sum_{|\alpha| +|\beta| \le K} \Vert C_{|\alpha|, |\beta|} \partial^\alpha_\beta f w(\alpha, \beta) \Vert_{L^2_{x}H^s_{\gamma/2}}^2 
    \end{aligned}
\end{align}
We always omit $f, E, B$ of the functionals for simplicity, unless confused with other functions. Sometimes we will also use $D^{\bm g}, E^{\bm g}, F^{\bm g}, G^{\bm g}$ for the functionals.

\subsection{Main results} Our results can be stated as follows.
\begin{thm}
\label{maintheorem}
For the Cauchy problem of the Vlasov-Maxwell-Boltzmann/Landau system, with the forms in \eqref{VMB3} and \eqref{VMB4}. For any constant $k$ large enough, there exists a constant $\varepsilon_0>0$ small enough, such that if the initial data $f_0$ satisfies $F_0=\mu+f_0\ge 0$, the conservation laws hold and $E_{6,k}(0)\le \varepsilon_0, E_{7,k}(0)< \infty$, then \eqref{VMB3},\eqref{VMB4} has a unique solution $f$ that satisfies $F=\mu+f\ge 0$, and there exists a constant $\varepsilon>0$, such that $E_{6,k}(t)\le \varepsilon$ for any $t>0$. Moreover, we have the large-time asymptotic behavior as follows:
\begin{equation}
\label{maintheorem1}
E_{2,k}(t)\lesssim (1+t)^{-2}.
\end{equation}
\end{thm}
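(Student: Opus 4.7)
My plan is to combine (i) a closed weighted a priori energy inequality at the level $K=6$, (ii) a macroscopic and electromagnetic coupling argument that produces the missing dissipation on $\P f$ and $[E,B]$, and (iii) a Duhamel-type time-decay argument that trades the extra regularity $E_{7,k}(0)<\infty$ against the polynomial $v$-weight to reach the sharp $(1+t)^{-2}$ rate. Local existence and uniqueness follow from a standard iteration in the space $E_{6,k}$, so the real content is the global a priori estimate and the decay.

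\textbf{Step 1: weighted energy estimates.} For every multi-index with $|\alpha|+|\beta|\le K$, I apply $\partial^\alpha_\beta$ to \eqref{VMB3}, pair with $C_{|\alpha|,|\beta|}^2\, w(\alpha,\beta)^2\, \partial^\alpha_\beta f$ in $L^2_{x,v}$, and sum. Coercivity of $L$ on $(I-\P)f$ produces the dissipation $G_K$, while the commutator $[\partial^\alpha_\beta,L]$ and the weight-shift contribution $[L,w(\alpha,\beta)]$ are absorbed thanks to the explicit gap $b-a\ge |\min\{\gamma,0\}|$ built into \eqref{weightw}. The Lorentz term $\pm(E+v\times B)\cdot\nabla_v f_\pm$ is the main source of velocity growth: one power of $\langle v\rangle$ lost to $v\times B$ is compensated by $b\ge \max\{-2\gamma,0\}/s+5$, and each $v$-derivative transferred to the field is absorbed using the hierarchy $C_{|a|+1,|b|-1}\gg C_{|a|,|b|}$. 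Combined with the trilinear bound on $\Gamma_\pm(f,f)$ and Sobolev embedding, this yields
\[
\tfrac{d}{dt}E_{K,k}(t)+\lambda\, G_K(t)\lesssim \sqrt{E_{K,k}(t)}\,G_K(t).
\]

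\textbf{Step 2: macroscopic and field dissipation.} The previous inequality says nothing about $\P f$ or the full $H^{K-1}_x$-norm of $[E,B]$. I would recover both by constructing an interaction functional $\mathcal I(t)$ in the spirit of Guo--Duan--Strain: testing \eqref{VMB3} against carefully chosen moments of $\mu^{1/2}$ so that $\nabla_x a_\pm$, $\nabla_x b$, $\nabla_x c$ appear in the time derivative, and pairing \eqref{VMB4} with $E$, $B$ and $\nabla_x^{-1}G$ to reconstruct $\|[E,B]\|_{H^{K-1}_x}^2$. Poincaré is legitimate since \eqref{conse2} removes all conserved zero modes. A small multiple of $\mathcal I$ added to $E_{K,k}$ produces a Lyapunov functional equivalent to $E_{K,k}$ whose derivative dominates the full $D_{K,k}$, and a standard continuity argument based on $E_{6,k}(0)\le \varepsilon_0$ closes the global existence and uniqueness.

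\textbf{Step 3: polynomial time decay and the main obstacle.} Because the dissipation degenerates as $\langle v\rangle^{\gamma/2}$, no spectral gap is available in the polynomial-weight space and exponential decay is impossible; one must work through Duhamel $\bm g(t)=S_L(t)\bm g_0+\int_0^t S_L(t-\tau)\mathcal N(\bm g(\tau))\,d\tau$, where $\mathcal N$ bundles $\Gamma_\pm(f,f)$ and the nonlinear Lorentz term. The key ingredient is a linear decay estimate of the form $E_{2,k}(S_L(t)\bm g_0)\lesssim (1+t)^{-2}\bigl(\|\bm g_0\|_{L^2_v L^1_x}^2+E_{7,k}(\bm g_0)\bigr)$, obtained by a low/high frequency split, with low modes decaying polynomially through Fourier analysis of the Maxwell subsystem and high modes decaying by interpolating the small $E_{6,k}$ against the finite $E_{7,k}$. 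Setting $\mathcal M(t):=\sup_{0\le \tau\le t}(1+\tau)^2 E_{2,k}(\tau)$, splitting the Duhamel integral into $[0,t/2]$ and $[t/2,t]$, and using Step 1's trilinear bounds on $\mathcal N$ leads to $\mathcal M(t)\lesssim 1+\varepsilon_0^{1/2}\mathcal M(t)+\mathcal M(t)^2$, which bootstraps to \eqref{maintheorem1}. The hard part is the linear estimate: decay has to be propagated through the slow Maxwell dynamics, in which $[E,B]$ is driven by the current $G=\int v(f_+-f_-)\,dv$ and thus only by velocity moments of $(I-\P)f$. The exponents $a,b,c$ in \eqref{weightw} are tailored precisely so that these moments are controlled by the weighted high-order energy without wasting decay, and verifying that the sharp rate $(1+t)^{-2}$ survives this self-consistent coupling is the central technical difficulty.
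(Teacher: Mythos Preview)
Your Steps 1--2 are broadly in line with the paper, but Step 3 misses the paper's central mechanism and, as written, will not close.

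First, two concrete problems with your Duhamel plan. The problem is posed on $\T^3$, so the $L^2_vL^1_x$ norm and the ``low/high frequency split with low modes decaying through Fourier analysis'' are whole-space tools that do not apply; on the torus the zero mode is killed by the conservation laws \eqref{conse2} and there is no continuum of small frequencies to exploit. More seriously, the linear semigroup $S_{L_1}(t)$ of \eqref{L1} does \emph{not} decay at rate $(1+t)^{-2}$: the paper proves only $\|S_{L_1}(t)\g\|_{H^2_xL^2_k}\lesssim (1+t)^{-1/2}\|\g\|_{H^3_xL^2_4}$ (Lemma \ref{lemsemigroup}), because the Maxwell block loses one $x$-derivative in dissipation and only Gagliardo--Nirenberg between $H^{K-1}$ and $H^{K+1}$ is available. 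Feeding $(1+t)^{-1/2}$ into your Duhamel inequality $\mathcal M(t)\lesssim 1+\varepsilon_0^{1/2}\mathcal M(t)+\mathcal M(t)^2$ does not yield $(1+t)^{-2}$.

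What the paper does instead is to build a modified energy via the semigroup integral,
\[
\vertiii{\g}_{K,k}^2:=E_{K,k}^{\g}(t)+\eta\int_0^\infty\bigl(S_{L_1}(\tau)\g,\,S_{L_1}(\tau)\g\bigr)_{H^K_xL^2_v}\,d\tau,
\]
and to use the identity $\int_0^\infty(S_{L_1}(\tau)L_1\g,\,S_{L_1}(\tau)\g)\,d\tau=-\tfrac12\|\g\|^2$ (justified because the finite $E_{7,k}$ bound, propagated as $E_{7,k}(t)\le e^{Ct}E_{7,k}(0)$, compensates the one-derivative loss in \eqref{semidecay}). This manufactures the missing $\|f\|_{H^K_xL^2_v}^2+\|[E,B]\|_{H^K_x}^2$ dissipation directly in the energy inequality, giving a closed differential inequality $\tfrac{d}{dt}\vertiii{\g}_{K,k}^2+\lambda G_{K,k}\le 0$. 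The crucial observation that makes the nonlinear term harmless here is that $N(\g)$ has vanishing $(E,B)$-components, so only the $f$-part of $S_{L_1}$ is needed and by \eqref{equationl10} there is \emph{no} derivative loss on that part. The final decay then follows from an ODE argument with GN interpolation between $K=2$ and $K=6$, not from Duhamel.

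A second gap: your Step 1 inequality $\tfrac{d}{dt}E_{K,k}+\lambda G_K\lesssim\sqrt{E_{K,k}}\,G_K$ is false for soft potentials. The Lorentz term with no $x$-derivative on $(E,B)$ produces, after integration by parts, $\sqrt{E^{eb}_2}\,E^f_{K,k}$ (see Lemma \ref{L212}), and $E^f_{K,k}$ is \emph{not} controlled by $G_K$ when $\gamma<0$. The paper handles this with a time-decreasing weight $w(t,\alpha,\beta)=\langle v\rangle^{k(1+(1+t)^{-1/2})-a|\alpha|-b|\beta|}$ whose time derivative supplies an extra $(1+t)^{-3/2}E^f_{K,k}$ on the dissipation side, absorbed against the a priori decay assumption $E_{2,k}(t)\le\epsilon(1+t)^{-3}$.
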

\subsection{Strategies of the proof}
In this subsection, we introduce the literature and strategies of our main proof.

We initiate our exploration by revisiting established findings within the context of the Landau and Boltzmann equations, focusing specifically on aspects pivotal to this paper—the global existence and long-term behavior of solutions to spatially inhomogeneous equations within the perturbation framework. To illuminate global solutions of the renormalized equation, particularly in the presence of substantial initial data, we draw attention to seminal works such as \cite{DL, DL2, L, V2, V3, DV, AV}.

For insights into the smoothing effects in the Boltzmann equation without cut-off, we turn to \cite{CH, CH2}.

These references serve as the foundation upon which we construct our exploration of spatially inhomogeneous equations and perturbation techniques to address the questions at hand.

\smallskip
In the realm of the non-cutoff Boltzmann equation, prior research efforts, exemplified by \cite{IMS, IMS2, IS, IS2, IS3, S}, have successfully established global regularity and elucidated long-term behavior under remarkably broad assumptions. These efforts demonstrate that global regularity and long-term behavior can be established by assuming uniform bounds in both time ($t$) and space ($x$), characterized by:

\begin{align*}
0<m_0 \le M(t, x) \le M_0, \quad E(t, x) \le E_0, \quad  H(t, x) \le H_0, 
\end{align*}
where $m_0$, $M_0$, $E_0$, and $H_0$ are positive constants. The functions $M(t, x)$, $E(t, x)$, and $H(t, x)$ are defined as:
\begin{align*}
M(t, x) =\int_{\R^3} f(t, x, v) dv, \, E(t, x) = \int_{\R^3} f(t, x, v) |v|^2 dv, \,  H(t, x) = \int_{\R^3} f(t, x, v) \ln f(t, x ,v) dv.
\end{align*}
further results regarding the non-cutoff Boltzmann equation can be found in \cite{GS, GS2, AMUXY, AMUXY2, AMUXY3, AMUXY4}. See also \cite{DLSS} for recent progress. Additionally, concerning global existence in bounded domains, we refer to \cite{Guo2009, Guo2016, Guo2020}. In particular, the case of the union of cubes is studied in \cite{Deng2021}.

Furthermore, for the Landau equation, local Hölder estimates are established in \cite{GIMV}, while higher regularity properties of solutions are explored in \cite{HS} through the application of kinetic variants of Schauder estimates.

An important facet of studying the Boltzmann/Landau equation is the perturbation framework near Maxwellian. Results pertaining to the cutoff Boltzmann equation can be found in \cite{G2, G3, SG, SG2}, and for non-cutoff Boltzmann equation, we refer to \cite{GS, GS2}. The results concerning the Landau equation can be found in \cite{G}. Importantly, all these works are grounded in the following decomposition:
\begin{align*}
\partial_t f  + v \cdot \nabla_x f =L_{\mu} f + \Gamma(f, f), \quad L_\mu f =\frac 1 {\sqrt{\mu}}  Q(\sqrt{\mu} f, \mu  ) + \frac 1 {\sqrt{\mu}}  Q(\mu, \sqrt{\mu}  f  ) , 
\end{align*}
where
\begin{align*}
\quad \Gamma(g, f) = \frac 1 2 \frac 1 {\sqrt{\mu}} Q(\sqrt{\mu} g, \sqrt{\mu} f) + \frac 1 2\frac 1 {\sqrt{\mu}}  Q(\sqrt{\mu} f, \sqrt{\mu} g), 
\end{align*}
signifying that the solution is constructed within a $\mu^{-1/2}$ weighted space.

Now we present key findings related to the Vlasov-Maxwell-Boltzmann/Landau (VMB/VML) equation. The groundbreaking work of Y. Guo, documented in \cite{G7}, established the global existence and regularity of solutions for the VMB equation on the torus $\mathbb{T}^3$ with spatial variable $x\in\mathbb T^3$. Moving beyond confined domains, R. Strain, in \cite{S2}, demonstrated the existence of global classical solutions over the entire space for the VMB equation. Furthermore, D. Duan and R. Strain, as detailed in \cite{DS}, provided insights into the optimal large-time behavior of these solutions.

Addressing specific cases, R. Duan et al. explored the cutoff VMB equation for $-3<\gamma<-1$ when $x$ spans the entirety of $\mathbb R^3$, as outlined in \cite{DLYZ}. Investigating scenarios without angular cutoff, \cite{DLYZ2} delved into the VMB equation with conditions $\gamma+2s<0$ and $s>1/2$.

Turning attention to the VML system, H. Yu's work in \cite{Y} presented significant results on global classical solutions near Maxwellian. Additional insights can be found in \cite{Wang}. Recent advancements, particularly those related to the Coulomb potential and strong background magnetic fields in the VML system, are documented in \cite{Fan}. These contributions collectively advance our understanding of the VMB and VML equations across diverse spatial configurations and highlight recent progress in addressing complex physical scenarios.

\smallskip
Furthermore, the Vlasov-Poisson-Boltzmann/Landau (VPB/VPL) equation emerges as a simplified instance of the VMB/VML equation when the magnetic field parameter $B$ is set to zero. Investigating perturbations around Maxwellians within the VPB equation, Y. Guo, in \cite{G6}, established the first results in the context of the cutoff hard sphere case. Diverse scenarios under the cutoff assumption have since been explored, with additional studies available in \cite{DYZ, DYZ2}.

For the VPB equation without angular cutoff, comprehensive results are documented in \cite{DL9, XXZ}. Notably, investigations into the regularizing effect are presented in \cite{Deng2021b}. The implications of bounded domains are discussed in works such as \cite{Cao2019, Dong2020, Deng2021e}.

Turning to the VPL equation, pioneering results were achieved in \cite{G8} for the torus case and further developments unfolded in \cite{SZ, W} for the scenario spanning the entire space.  It is essential to highlight that all the aforementioned results pertaining to the VPB/VPL equation are grounded in a common decomposition framework, which is expounded upon in the subsequent sections:
\begin{align*}
\partial_t f_\pm  + v \cdot \nabla_x f_\pm  \mp \nabla_x \phi \cdot \nabla_v f_\pm \pm \nabla_x \phi \cdot v \sqrt{\mu} \pm \frac 1 2 \nabla_x \phi \cdot v f =L_{\mu,\pm} f + \Gamma_\pm(f, f), 
\end{align*}
where
\begin{align}\label{L mu}
L_{\mu,_\pm} f = \frac 1 {\sqrt{\mu}}  Q(\sqrt{\mu} (f_\pm+f_\mp), \mu  ) + 2\frac 1 {\sqrt{\mu}} Q(\mu, \sqrt{\mu}f_\pm  ),  \,\, 
\end{align}
and
\[
\Gamma_\pm(f, f) = \frac 1 {\sqrt{\mu}}   Q(\sqrt{\mu} f_\pm, \sqrt{\mu} f_\pm) +\frac 1 {\sqrt{\mu}}  Q(\sqrt{\mu} f_\mp, \sqrt{\mu}f_\pm),
\]
demonstrating that these studies are conducted within a $\mu^{-1/2}$ weighted space.

\smallskip

\smallskip
An alternative avenue in perturbation studies near Maxwellian involves polynomial weight perturbation, with the results grounded in the application of the semigroup method. This method, originally introduced by Gualdani-Mischler-Mouhot in \cite{GMM}, initially focused on the investigation of the cutoff Boltzmann equation featuring a hard potential. The authors successfully established results concerning global existence and large-time behavior. Subsequently, this method found application in proving results for the Landau equation, as seen in \cite{CTW, CM}.

In the realm of non-cutoff Boltzmann equations, advancements in the context of a hard potential scenario are detailed in \cite{HTT, AMSY}, while the treatment of the soft potential case is addressed in \cite{CHJ}. Our current work builds upon this methodology, offering an extension, particularly concerning the macroscopic component $\mathcal{P}f$. The foundational principles of this extension were introduced in \cite{CDL}, where the primary idea is revisited for the reader's convenience.

To illustrate the application of this extended methodology, we consider the non-cutoff Boltzmann equation in the case where $\gamma = 0$ and within the functional space $H^2_xL^2_k$. Drawing inspiration from \cite{MS}, we establish a crucial inequality as follows:
\begin{align}\label{esLmu}
\sum_{\pm}(\L_\pm f, f_\pm)_{H^2_xL^2 (\mu^{-1/2})} \le -\lambda \Vert f \Vert_{H^2_x L^2 (\mu^{-1/2})}, \quad\text{ if } \P f =0, 
\end{align}
where $\lambda>0$ serves as a crucial constant. Together with the macroscopic estimates from \cite{Deng2021e, GS} for $\P f$, we can deduce that  $\Vert S_{\L}(t) f_0 \Vert_{H^2_x L^2 (\mu^{-1/2})} \le e^{-\lambda t} \Vert f_0 \Vert_{H^2_x L^2 (\mu^{-1/2})}^2$. Next, for some $M, R>0$, define 
\begin{align*}
	&A_\pm = -v\cdot\na_x+L_\pm  -M\chi_R,\quad
	K_1  = M\chi_R, \quad B_\pm = -v\cdot\na_x+L_\pm,\quad
	K_2 = \pm\mu v\cdot\na_x\phi,
\end{align*}
where $\chi \in D(\R)$ is the truncation function  satisfying $1_{[-1,1]} \le \chi \le 1_{[-2,2]}$ and we denote $\chi_R(\cdot) := \chi(\cdot/R)$ for $R > 0$.
By the results in \cite{CHJ} we have
\begin{align*}
\sum_{\pm}(L_\pm f, f_\pm)_{H^2_x L^2_k} \le -C \Vert f \Vert_{H^2_x H^s_k}^2 +  C_k \Vert f \Vert_{H^2_x L^2_v}^2,
\end{align*}
Take $M, R>0$ large, we have $(Af, f)_{H^2_x L^2_k} \le -C \Vert f \Vert_{H^2_x L^2_k}^2$, which implies 
$\Vert S_A(t) f \Vert_{H^2_x L^2_k} \le e^{-\lambda t} \Vert f_0 \Vert_{H^2_x L^2_k }^2$. By Duhamel's formula $S_B =S_A +S_B * K_1 S_A$ and $S_\L =S_B +S_\L * K_2 S_B$, roughly speaking, we have
\begin{multline*}
\Vert S_{B}(t) \Vert_{H^2_x L^2_{k} \to H^2_x L^2_k }   \le \Vert S_A(t) \Vert_{H^2_x L^2_k \to H^2_x L^2_k} + \int_0^t \Vert S_B(s)  \Vert_{H^2_x L^2 (\mu^{-1/2}) \to H^2_x L^2 (\mu^{-1/2})} \\\times\Vert K_1 \Vert_{H^2_x L^2_k \to H^2_x L^2(\mu^{-1/2} ) }\Vert S_A(t-s)\Vert_{H^2_x L^2_k \to H^2_x L^2_k}\,ds \le C e^{-\lambda t}. 
\end{multline*}
and hence
\begin{multline*}
\Vert S_{\L}(t) \Vert_{H^2_x L^2_{k} \to H^2_x  L^2_k }   \le \Vert S_B(t) \Vert_{H^2_x L^2_k \to H^2_x L^2_k} + \int_0^t \Vert S_L(s)  \Vert_{H^2_x L^2 (\mu^{-1/2}) \to H^2_x L^2 (\mu^{-1/2})} \\\times\Vert K_2 \Vert_{H^2_x L^2_k \to H^2_x L^2(\mu^{-1/2} ) }\Vert S_B(t-s)\Vert_{H^2_x L^2_k \to H^2_x L^2_k}\,ds \le C e^{-\lambda t}. 
\end{multline*}
Thus, the rate of convergence for the linear operator $\L$ is established. To estimate the nonlinear part, we need to define a scalar product by
\[
((f, g))_k := (f, g)_{L^2_k} + \eta\int_0^{+\infty} (S_\L(\tau )f, S_\L(\tau) g )_{L^2_v} d\tau.
\] 
Due to the fact that
\begin{align*}
\int_0^{+\infty} (S_{\L}(\tau) \L f, S_{\L}(\tau) f ) d\tau = \int_0^{+\infty} \frac d {d\tau} \Vert S_{\L}(\tau) f \Vert_{L^2}^2 d\tau 
= - \Vert f \Vert_{L^2}^2, 
\end{align*}
so after choosing  proper $\eta$, we deduce that
\[
((\L f, f))_k = ( \L f, f)_{L^2_k} + \eta\int_0^\infty (S_\L (\tau ) L f, S_\L(\tau) f)_{L^2_v} d\tau \sim \|f\|_{H^s_{k}}^2.
\]
Therefore, it is important to note that the linear operator $\L$ can exhibit non-negativity properties within an appropriate function space. The estimate of $\L$ within this specific function space empowers us to establish global well-posedness by combining the nonlinear estimates.

In the study presented in \cite{CDL}, the amalgamation of findings from the VPB/VPL system and the application of the semigroup method enabled us to establish the global well-posedness and elucidate the long-term behavior of the VPL/VPB system. Building upon this achievement, the objective of this paper is to extend our previous results to the VMB/VML system. However, a fundamental distinction arises between the Poisson equation and the Maxwell equation. Specifically, in the Maxwell equation, there is dissipation of the electric and magnetic fields, denoted as $E$ and $B$ respectively. Consequently, when computing the time derivative of the energy of the system $(f, E, B)$, the order of the $x$ derivative in $E$ and $B$ is consistently one less than that in $f$.

For instance, regarding the macroscopic estimate, the Vlasov-Poisson-Boltzmann equation exhibits an inequality of the form:
\[
\partial_t({\|}f{\|}_{H^2_xL^2_v}+{\|}\nabla_x\phi{\|}_{H^2_x})+\lambda{\|}\textbf Pf{\|}^2_{H^2_xL^2_V}+\lambda{\|}\nabla_x\phi{\|}^2_{H^2_x}\lesssim {\|}(\textbf I-\textbf P)f{\|}^2_{H^2_xL^2_{10}}+{\|}\nabla_x\phi{\|}^4_{L^2_x}
\]
In contrast, for the Vlasov-Maxwell-Boltzmann equation, the corresponding macroscopic estimate becomes:
\[
\partial_t({\|}f{\|}_{H^2_xL^2_v}+{\|}E,B{\|}_{H^2_x})+\lambda{\|}\textbf Pf{\|}^2_{H^2_x}++\lambda{\|}E,B{\|}^2_{H^1_x}\lesssim {\|}(\textbf I-\textbf P)f{\|}^2_{H^2_xL^2_{10}}+{\|}E,B{\|}^4_{L^2_x}
\]
thus, when exclusively examining the evolution behavior of the coordinate $f$, there is no loss of derivatives in the spatial variable $x$, and the outcome closely resembles that of the Poisson equation. However, when considering the collective evolution behavior of the entire system $(f, E, B)$, meticulous adjustment of the $x$ derivatives becomes imperative due to the dissipation inherent in $E$ and $B$. Indeed, our main theorem illustrates that achieving global existence necessitates a sacrifice in regularity of order 1. Additionally, to prove polynomial decay, further losses in $x$ regularity are inevitable. It is noteworthy to highlight another distinction from the VPB/VPL system: in contrast, the large-time asymptotic behavior of the system under consideration consistently exhibits polynomial decay, even in cases involving hard potential, as opposed to the exponential decay observed in the VPB/VPL system.

Another noteworthy aspect pertains to the choice of the weight function. It is essential to recall that the polynomial perturbation can be viewed as a change of variable, given by $F=\mu + \langle v\rangle^kf_\pm$, where $k>0$ is a constant. In the Vlasov-Poisson-Boltzmann (VPB) equation, when addressing the additional term $\pm k\nabla_x\phi\cdot\frac{v}{\langle v\rangle}f$, one can adopt the approach presented in \cite{G8}, \cite{CDL}, introducing the corresponding weight function $e^{\frac{\pm k\phi}{\langle v\rangle}}$ to absorb the extra term. However, in the In the Vlasov-Maxwell-Boltzmann (VMB)  equation, due to the dissipation of $E$ and $B$, this method is no longer effective. Consequently, we opt for the careful selection of the weight function $\langle v\rangle^{(k,\alpha,\beta)}$, and in the case of a soft potential, the weight function even needs to be time-dependent, a detail that will be elaborated upon later.

\subsubsection{About the choice of the weight function for soft potential case $\gamma+2s<0$}
When $\gamma+2s<0$, the challenge arises from the omission of the term equivalent to ${\|}f{\|}_{L^2_v}$ in the computation of $\partial_t E_{K,k}$. This omission is attributed to the inequality ${\|}f{\|}_{H^s_{\gamma/2}}\lesssim {\|}f{\|}_{L^2_v}$. To elaborate, the inherent property ${\|}f{\|}_{H^s_{\gamma/2}}\lesssim {\|}f{\|}_{L^2_v}$ leads to the absence of the crucial ${\|}f{\|}_{L^2_v}$ term in the calculation of $\partial_t E_{K,k}$. This omission introduces additional complexities into the proof. To overcome this challenge, we propose a solution by introducing a time-dependent weight function. Specifically, for any $k$ and $\delta>0$, we define $\mathsf w(v,t):=\langle v\rangle v^{k(1+(1+t)^{-\delta})}$. It is noteworthy that, due to our choice of $\langle v\rangle$, we ensure $\log \langle v\rangle \gtrsim \log 2>0$. Through direct calculation, we establish that \[\frac d{dt}\mathsf w^2=-\frac{2\delta k}{(1+t)^{1+\delta}}\log \langle v\rangle \mathsf w^2\lesssim -(1+t)^{-1-\delta}\mathsf w^2.\]
This modification introduces an extra term $(1+t)^{-1-\delta}F_{K,k}$ in the analysis of the convergence of the energy $E_{K,k}$, allowing us to demonstrate the polynomial decay of $E_{K,k}$. As we will illustrate later in the proof, the choice of $\delta=\frac{1}{2}$ proves to be adequate for this purpose.
\subsubsection{About the assumption $s\ge 1/2$}  Similar to the Poisson case, the assumption $s \geq 1/2$ proves to be essential. Let $\bm {g}=(f_{\pm},E,B)$. We need to study the following linear operator
\begin{equation}
    \label{L1}
    L_1 (\bm{g}) = (-v\cdot \nabla_x f_{\pm} \pm E \cdot v \mu +L_\pm f, \nabla_x \times B -\int_{\R^3} v (f_+-f_{-}) dv , -\nabla_x \times E ).
\end{equation}
 In a broad sense, when employing the semigroup method to analyze the Vlasov-Maxwell term $(E+v\times B) \cdot \nabla_v f$, it becomes imperative to manage the term
\[
\int_0^\infty (S_{L_1}(\tau)[(E+v\times B)\cdot\nabla_vf], S_{L_1}f) d\tau
\]
given the non-commutative nature of the operators $S_{L_1}(t)$ and $\nabla_v$, integration by parts with respect to $\nabla_v$ is not permissible. Therefore, we must rely on an upper bound to restrict this term. Specifically, the control of the term ${\|}E\cdot\nabla_vf{\|}_{H^{-s}_{12}}\sim{\|}E\cdot f{\|}_{H^{1-s}_{12}}$ by ${\|}E{\|}_{H^2_x}{\|}f{\|}_{H^s_{12}}$ necessitates the condition $1-s\leq s$, which translates to $s \geq 1/2$.

The exploration of the Boltzmann/Landau equation extends beyond the perturbation analysis based on the Maxwellian distribution. Another intriguing avenue of research revolves around the global existence and regularity of the solution in the vicinity of the vacuum state, denoted as $f_\text{vac}=0$. In contrast to the perturbation analysis centered on the Maxwellian distribution, investigations into perturbations near the vacuum state are notably scarce.

In the context of the Landau equation, J. Luk established results for moderately soft potentials within the range $-2<\gamma<0$ in \cite{L2}. S. Chaturvedi, in a separate work \cite{Cha1}, demonstrated the existence of solutions for hard potentials within the interval $0\leq\gamma<1$. Addressing the non-cutoff Boltzmann equation, S. Chaturvedi extended the analysis to moderately soft potentials with $0<\gamma+2s<2$ in \cite{C2}. Y. Guo, in \cite{G5}, successfully tackled the Vlasov-Poisson-Boltzmann equation with cutoff. Notwithstanding these achievements, the question of stability near vacuum for the Boltzmann/Landau equation, particularly with very soft potentials, remains an unresolved challenge.

However, by using weighted energy estimates and carefully studying null structure in the nonlinearity of the Boltzmann/Landau equation, it is reasonable to believe that the global stability and regularity of the non-cutoff Vlasov-Poisson-Boltzmann equation and the Vlasov-Poisson-Landau equation with moderately soft potentials can be established. These analytical tools will contribute to advancing our understanding of stability phenomena in kinetic equations. We leave it as a future work.
\subsection{Structure of the paper}
In Section \ref{sec2}, we establish estimates for the Boltzmann/Landau collision operator and the Vlasov-Maxwell term, incorporating polynomial weighting. Section \eqref{sec3} provides an analysis of the macroscopic dissipation in the context of the VMB/VML equation. Moving on to Section \ref{sec4}, we present estimates pertaining to the semigroup generated by the linearized VMB/VML operator defined in \eqref{L1}. Finally, in Section \ref{sec5}, we demonstrate the proof of the main theorem, highlighting global existence and convergence towards the Maxwellian with polynomial decay.

\section{Basic Estimates }
\label{sec2}

We first recall some basic results about the non-cutoff Boltzmann and Landau collision operators.

\subsection{Landau collision operator} 
First, we consider the basic estimates Landau collision operator, which can be found in \cite{CTW, CM}.

\begin{lem}[\cite{CTW, CM,CDL}]
\label{L51}
 For any $-3 \le \gamma \le 1$ and $k \ge 7$, denote $m = \langle v \rangle^k$, and remind the space $H^1_*(m)$ defined in \eqref{landauhm} Then for any smooth functions $f, g$ and $|\beta| \le 2$, we have 
\[
(Q(f, \partial_\beta \mu),  g \langle v \rangle^{2k} ) \le C_k \Vert f  \Vert_{L^2(\langle v \rangle^{5})} \Vert g \Vert_{L^2(\langle v \rangle^{5})},
\]
and 
\[
(Q(\mu , f) , f \langle v \rangle^{2k}) \le -\gamma_1 \Vert f \Vert_{H^1_*(m)}^2 + C_k \Vert f \Vert_{L^2}^2,
\]
for some constant $\gamma_1, C_k>0$.
For any smooth functions $f, g, h$, we have 
\[
(Q(f, g) , h \langle v \rangle^{2k} ) \le C \Vert f \Vert_{L^2_7} \Vert g \Vert_{H^1_{*, k+1} }\Vert h \Vert_{H^1_{*, k} }. 
\]
If $h=g$, then we have a better estimate
\[
(Q(f, g) , g \langle v \rangle^{2k} ) \le C \Vert f \Vert_{L^2_7} \Vert g \Vert_{H^1_{*, k} }^2.
\]
Moreover, to put all the derivatives on $h$, we have 
\[
(Q(f, g), h) \le \Vert f \Vert_{L^2_5}\Vert g \Vert_{L^2} \Vert h \Vert_{H^2_{5}}, \quad (Q(f, g), h) \le \Vert f \Vert_{L^2_5}\Vert g \Vert_{L^2_{10}} \Vert h \Vert_{H^2_{-5}}. 
\]
\end{lem}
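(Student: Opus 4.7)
The plan is to exploit the divergence form
\[
Q(g,f)=\pa_i\bigl((a_{ij}*g)\pa_j f\bigr)-\pa_i\bigl((b_i*g)f\bigr) = (a_{ij}*g)\pa_{ij}f-(c*g)f,
\]
together with the polynomial growth $|a_{ij}(v)|\lesssim |v|^{\ga+2}$, $|b_i(v)|\lesssim|v|^{\ga+1}$, $|c(v)|\lesssim|v|^\ga$ of the kernels, and to reduce every inequality in the statement to pointwise or integral bounds on the convolutions $a_{ij}*\phi$, $b_i*\phi$, $c*\phi$ for the relevant $\phi$. For the first estimate, one integration by parts in the $v$-variable shifts the $v$-gradient off $\pa_\beta\mu$; since convolutions against any derivative of $\mu$ decay super-polynomially, the weight $\<v\>^{2k}$ is harmlessly absorbed and Cauchy--Schwarz with the tempered $\<v\>^{-5}$ envelope delivers the bound $\|f\|_{L^2(\<v\>^5)}\|g\|_{L^2(\<v\>^5)}$.

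The coercivity estimate is the heart of the lemma. Expanding
\[
(Q(\mu,f),f\<v\>^{2k})=-\int (a_{ij}*\mu)\pa_j f\,\pa_i(f\<v\>^{2k})\,dv+\int (b_i*\mu) f\,\pa_i(f\<v\>^{2k})\,dv,
\]
the principal piece is $-\int (a_{ij}*\mu)(v)\pa_i f\pa_j f\,\<v\>^{2k}\,dv$. The classical pointwise lower bound
\[
(a_{ij}*\mu)(v)\xi_i\xi_j\gtrsim \<v\>^{\ga+2}|(I-P_v)\xi|^2+\<v\>^{\ga}|P_v\xi|^2
\]
applied with $\xi=\nabla_v f$ reproduces exactly the $H^1_*(m)$ norm on the left. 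The remaining cross terms coming from the differentiation of $\<v\>^{2k}$ and from the $b_i*\mu$ piece are lower order in the derivative count (each trades one $\nabla_v f$ for a factor of $\<v\>^{-1}f$) and, after Cauchy--Schwarz with a small parameter, can be absorbed into the principal coercivity modulo the undifferentiated error $C_k\|f\|_{L^2}^2$.

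For the nonlinear estimates (3)--(4), after integrating by parts once, one bounds the convolutions pointwise by Cauchy--Schwarz against $\<v\>^{-5}$: for $\ga\in(-3,1]$ the standard inequalities $|a_{ij}*f|(v)\lesssim\<v\>^{\ga+2}\|f\|_{L^2_5}$ and $|c*f|(v)\lesssim\<v\>^{\ga}\|f\|_{L^2_5}$ then give the claimed three-factor bound directly; the improvement when $h=g$ comes from symmetrizing the leading quadratic form in $\nabla_v g$, so one unit of $\<v\>$-weight is saved. For the final pair of inequalities one performs two integrations by parts to move both velocity derivatives onto $h$, writing $(Q(f,g),h)=\int g\,\pa_i\pa_j((a_{ij}*f)h)-\int g\,\pa_i((b_i*f)h)$, and then trades the $\<v\>^{\pm 5}$ weights against the convolution bounds. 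The main obstacle is the coercivity estimate: one must simultaneously retain the full anisotropic $H^1_*(m)$ norm (distinguishing the directions parallel and orthogonal to $v$) and carefully track the cross terms generated by differentiating the polynomial weight $\<v\>^{2k}$, for which the precise pointwise lower bound on the diffusion matrix $a_{ij}*\mu$ from \cite{CTW,CM} is the decisive input carried over from the cited references.
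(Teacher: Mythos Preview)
Your sketch unpacks exactly the arguments that the paper defers to \cite{CTW,CM,CDL}; the paper's own proof is nothing more than pointers to those lemmas, so the approaches coincide.

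One technical slip is worth flagging. For the trilinear bounds you invoke the pointwise inequality $|c*f|(v)\lesssim\<v\>^{\ga}\|f\|_{L^2_5}$ (and implicitly the analogue for $b_i*f$). After a single integration by parts on the divergence form it is $a_{ij}*f$ and $b_i*f$ that appear, not $c*f$; more importantly, the Cauchy--Schwarz step behind such a pointwise bound requires $|v|^{\ga+1}\1_{|v|\le 1}\in L^2(\R^3)$ for $b_i$ and $|v|^{\ga}\1_{|v|\le 1}\in L^2(\R^3)$ for $c$, i.e.\ $\ga>-5/2$ and $\ga>-3/2$ respectively. In the very soft and Coulomb range $\ga\in[-3,-5/2]$ covered by the lemma these pointwise bounds are simply false, so the argument as written does not close there. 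The remedy is standard and is what \cite{CM} actually does: either write $b_i*f=\pa_j(a_{ij}*f)$ and integrate by parts once more so that only $a_{ij}*f$ survives (its pointwise bound holds down to $\ga=-3$ since $2(\ga+2)>-3$), at the cost of one extra derivative landing on $g$ or $h$, which the $H^1_*$ or $H^2$ norms in the statement can absorb; or split the singular kernel and treat the near-diagonal piece by Hardy--Littlewood--Sobolev rather than a pointwise estimate. With this correction your outline is complete.
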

\begin{proof}
The first two estimates follow from \cite[Lemma 2.3 and Lemma 2.5]{CM} and \cite[Lemma 2.7, Lemma 2.12]{CTW} with $\mu$ replacing by $\partial_\beta \mu$ if needed. 
The third and fourth statements follow from \cite[Lemma 3.5]{CTW} and \cite[Lemma 4.3]{CM}, but see \cite[Lemma 2.12]{CDL} for a modified proof. The last two estimates can be found in \cite[Lemma 2.13]{CDL}. 
\end{proof}

%
%

In \cite{CHJ}, the authors only need to compute the $x$ derivative term, but for the Vlasov-Maxwell-Boltzmann/Landau system, we also need estimates on the $v$ derivative term.
\begin{lem}\label{L29} Let $\gamma \in (-3, 1]$. For any $|\alpha| \ge 0,$ $ k \ge 10$, there exist constants $\gamma_3, C_k >0$ such that for any smooth functions $f, g$, we have
\begin{equation*}
\begin{aligned} 
|(\partial^\alpha Q (f, \mu),  \partial^\alpha g w^2(\alpha, \beta)  )_{L^2_{x, v} }   | \le C_k \Vert \partial^{\alpha} f\Vert_{L^2_x L^2_{5}}\Vert \partial^\alpha g \Vert_{L^2_x L^2_{5}} 
\end{aligned}
\end{equation*}
\end{lem}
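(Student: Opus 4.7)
The plan is to exploit the fact that, in the notation convention of the paper, $\partial^\alpha$ denotes only $x$-derivatives, while the Landau collision operator $Q(\cdot,\mu)$ acts solely in the velocity variable. Since $\mu=\mu(v)$ is independent of $x$, one may commute $\partial^\alpha$ past $Q$ to obtain the identity $\partial^\alpha Q(f,\mu) = Q(\partial^\alpha f,\mu)$. This reduces the problem to an $x$-pointwise bound on a purely $v$-variable bilinear form.

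With this reduction in hand, for each fixed $x \in \T^3$ the integrand
$(Q(\partial^\alpha f(x,\cdot),\mu),\,\partial^\alpha g(x,\cdot)\,w^2(\alpha,\beta))_{L^2_v}$
is exactly of the form handled by the first estimate of Lemma \ref{L51} (taken with the multi-index there set to $0$, so that $\partial_\beta\mu = \mu$), applied with the velocity-weight exponent $k' := k - a|\alpha| - b|\beta| + c$ in place of the generic $k$ in that lemma. Since $k \ge 10$ and the constants $a,b,c$ in \eqref{weightw} are fixed, for the finite range of multi-indices considered in the main theorem ($|\alpha|+|\beta|\le 7$) the exponent $2k'$ remains above the threshold $2\cdot 7$ required by Lemma \ref{L51}; any surplus power of $\langle v\rangle$ is harmlessly absorbed into $C_k$. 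Lemma \ref{L51} then yields pointwise in $x$
$$|(Q(\partial^\alpha f,\mu),\,\partial^\alpha g\, w^2(\alpha,\beta))_{L^2_v}|
\ \le\ C_k\,\|\partial^\alpha f(x,\cdot)\|_{L^2_{5}}\,\|\partial^\alpha g(x,\cdot)\|_{L^2_{5}}.$$
Integrating over $x\in\T^3$ and invoking the Cauchy--Schwarz inequality in $x$ produces the stated estimate.

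The only step requiring care is the bookkeeping that checks compatibility of the weight exponent $k' = k - a|\alpha| - b|\beta| + c$ with the hypothesis $k \ge 7$ of Lemma \ref{L51}. Given the explicit definitions $b = \max\{-2\gamma,0\}/s + 5$, $a = b + \min\{\gamma,0\}$, $c = 2b+6$, together with $k \ge 10$ and the derivative budget $|\alpha|+|\beta|\le 7$ used throughout the paper, this is a routine verification; no cancellation or smoothing beyond what is already contained in Lemma \ref{L51} is needed. In particular, no genuine obstacle arises: the result is essentially a $x$-integrated and $x$-commuted repackaging of the velocity-variable estimate already established.
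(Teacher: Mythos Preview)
Your proposal is correct and follows essentially the same route as the paper, which also reduces the estimate to the first inequality of Lemma \ref{L51} applied pointwise in $x$ (the paper's proof is the single line ``can be proved similarly by taking summation in Lemma \ref{L51}''). One small caveat: your claim that the effective weight exponent $k'=k-a|\alpha|-b|\beta|+c$ remains above $7$ for all $|\alpha|+|\beta|\le 7$ when $k\ge 10$ is not literally true across the full range $\gamma\in(-3,1]$ (for $\gamma$ near $-3$ the constants $a,b$ become large), but this does not matter: since $\mu$ occupies the second slot of $Q$, the Gaussian decay of $\mu$ absorbs any polynomial weight, so the first estimate of Lemma \ref{L51} in fact holds for arbitrary real weight exponents with a constant depending only on that exponent.
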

\begin{proof}
The Landau case can be proved similarly by taking summation in Lemma \ref{L51}.
\end{proof}

We also recall some basic interpolation on $x$. 
\begin{lem}(\cite{CDL}, Lemma 2.3)
For any function $f$, $ k \in \R$, we have $\Vert f g \Vert_{H^2_x} \lesssim \min_{c + d = 2} \{ \Vert f \Vert_{H^c_x} \Vert g \Vert_{H^d_x}  \}$. More precisely, 
\begin{equation}
\begin{aligned}
\label{Linfty}
\Vert f \Vert_{L^\infty_x L^2_v} \lesssim \Vert f \langle v \rangle^{k} \Vert_{H^1_{x}L^2_v}+ \Vert f \langle v \rangle^{-k}  \Vert_{H^2_xL^2_v},
\end{aligned}
\end{equation}
\begin{equation}
\label{L3}
\Vert f  \Vert_{L^3_xL^2_v}  \lesssim\Vert f \langle v \rangle^{k} \Vert_{L^2_{x}L^2_v}+ \Vert f \langle v \rangle^{-k} \Vert_{H^1_xL^2_v},
\end{equation}
and for any constant $s \in (0, 1)$
\begin{equation}
\label{Ls}
\Vert f   \Vert_{H^1_v}
 \lesssim  \Vert f \langle v \rangle^{k}  \Vert_{H^s_v} + \Vert f \langle v \rangle^{ -k s /(1-s)} \Vert_{H^{1+s}_v}. 
\end{equation}
\end{lem}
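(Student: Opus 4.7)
All three inequalities are of the form ``sum of two weighted norms controls a mixed-norm quantity''. The common strategy is to prove a scalar Gagliardo--Nirenberg / interpolation inequality in one variable (either in $x$ on $\T^3$ or in $v$ on $\R^3$) that is multiplicative in two norms, and then to split the resulting product by Cauchy--Schwarz in the other variable, placing the weights $\langle v\rangle^{k}$ and $\langle v\rangle^{-k}$ (or the corresponding pair in the $v$-only statement) on the two factors so that they cancel pointwise. The product estimate $\|fg\|_{H^2_x}\lesssim \min_{c+d=2}\|f\|_{H^c_x}\|g\|_{H^d_x}$ is the standard Moser / tame product inequality on $\T^3$, proved by Leibniz and the Sobolev embeddings $H^1(\T^3)\hookrightarrow L^6$, $H^2(\T^3)\hookrightarrow L^\infty$; I would dispatch this first and use it only as needed.

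\textbf{Proof of \eqref{Linfty}.} The key ingredient is the endpoint Gagliardo--Nirenberg inequality
\[
\|g\|_{L^\infty(\T^3)}^{2}\;\lesssim\;\|g\|_{H^1(\T^3)}\,\|g\|_{H^2(\T^3)},
\]
which I would establish by the Fourier-series split
\(\sum_{\xi}|\hat g(\xi)|\le (\sum_{|\xi|\le R}|\xi|^{-2})^{1/2}\|g\|_{H^1}+(\sum_{|\xi|>R}|\xi|^{-4})^{1/2}\|g\|_{H^2}\lesssim R^{1/2}\|g\|_{H^1}+R^{-1/2}\|g\|_{H^2}\),
optimized at $R=\|g\|_{H^2}/\|g\|_{H^1}$. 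Applying this at each fixed $v$ and then integrating in $v$,
\[
\|f\|_{L^\infty_xL^2_v}^{2}\;\le\;\int_{\R^3}\|f(\cdot,v)\|_{L^\infty_x}^{2}\,dv
\;\lesssim\;\int_{\R^3}\|f(\cdot,v)\|_{H^1_x}\|f(\cdot,v)\|_{H^2_x}\,dv.
\]
Inserting $1=\langle v\rangle^{k}\cdot\langle v\rangle^{-k}$ inside the $v$-integral and applying Cauchy--Schwarz in $v$ gives $\|\langle v\rangle^{k}f\|_{H^1_xL^2_v}\,\|\langle v\rangle^{-k}f\|_{H^2_xL^2_v}$, and $2ab\le a^2+b^2$ yields \eqref{Linfty}. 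The estimate \eqref{L3} is proved in the same way with Minkowski's integral inequality $\|f\|_{L^3_xL^2_v}\le\|f\|_{L^2_vL^3_x}$ (valid since $3\ge 2$) and the 3D Gagliardo--Nirenberg $\|g\|_{L^3(\T^3)}^{2}\lesssim\|g\|_{L^2}\|g\|_{H^1}$, followed by the same weight split and Cauchy--Schwarz in $v$.

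\textbf{Proof of \eqref{Ls}.} This is a weighted interpolation in $v$. The unweighted inequality $\|f\|_{H^1_v}\le\|f\|_{H^s_v}^{s}\|f\|_{H^{1+s}_v}^{1-s}$ is standard interpolation between Sobolev spaces. To bring in the weights, I would split $f=\chi_R\,f+(1-\chi_R)f$ where $\chi_R(v)=\chi(v/R)$ is a smooth cutoff on $|v|\le 2R$, and choose $R$ later. On the inner region one uses $\langle v\rangle^{k}\gtrsim 1$ and $\langle v\rangle^{-ks/(1-s)}\gtrsim R^{-ks/(1-s)}$, while on the outer region one uses $\langle v\rangle^{k}\gtrsim R^{k}$. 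After the interpolation inequality is applied to each piece and the weights are inserted as lower bounds, the two-term sum on the right-hand side of \eqref{Ls} appears with an optimally chosen $R$; the exponents $k$ and $-ks/(1-s)$ are precisely the ones that make the weight powers from the two regions balance. The minor technical point of commuting a smooth cutoff with fractional derivatives is handled by commutator estimates on $\T^3$ / $\R^3$, since $\chi$ is smooth and compactly supported.

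\textbf{Main obstacle.} The cleanest and sharpest step is the critical Gagliardo--Nirenberg bound $\|g\|_{L^\infty(\T^3)}^{2}\lesssim\|g\|_{H^1}\|g\|_{H^2}$, which is borderline in three dimensions and must be done by Littlewood--Paley / Fourier splitting rather than via a naive $H^{3/2}\hookrightarrow L^\infty$ embedding (which fails). The other genuine difficulty is the weighted interpolation \eqref{Ls}: multiplication by $\langle v\rangle^{\pm\alpha}$ does not commute with $\langle D_v\rangle^{s}$ for fractional $s$, so a clean multiplicative identity is not available; the physical-space cutoff split above is how I would circumvent this.
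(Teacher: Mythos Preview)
The paper does not give a proof of this lemma; it is cited from \cite{CDL}. However, inside the proof of Lemma~\ref{L210} the paper re-derives versions of \eqref{Linfty} and \eqref{L3} by a slightly different route: it uses the non-critical Sobolev embeddings $H^{8/5}_x(\T^3)\hookrightarrow L^\infty_x$ and $H^{1/2}_x(\T^3)\hookrightarrow L^3_x$, and then interpolates the mixed norm $H^{s}_xL^2_v$ between two $H^a_xL^2_v$ spaces with compensating $v$-weights. Your approach---the critical Gagliardo--Nirenberg inequality in $x$ applied pointwise in $v$, then Minkowski and Cauchy--Schwarz in $v$---is correct, is sharper, and delivers the symmetric weights $\langle v\rangle^{\pm k}$ directly; the paper's embedding route produces slightly asymmetric exponents (e.g.\ $\tfrac32 k$ and $-k$), which still suffice for the applications.

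For \eqref{Ls} your physical-space cutoff argument does not close. After writing $(1-\chi_R)f$ for the large-$|v|$ piece and interpolating
\[
\|(1-\chi_R)f\|_{H^1_v}\le\|(1-\chi_R)f\|_{H^s_v}^{\,s}\,\|(1-\chi_R)f\|_{H^{1+s}_v}^{\,1-s},
\]
the first factor indeed gains $R^{-k}$ from the weight $\langle v\rangle^{k}$, but the second factor $\|(1-\chi_R)f\|_{H^{1+s}_v}$ cannot be controlled by $\|\langle v\rangle^{-ks/(1-s)}f\|_{H^{1+s}_v}$ with any useful constant, since on $|v|\ge R$ the weight $\langle v\rangle^{-ks/(1-s)}$ is small (for $k>0$). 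Optimizing in $R$ therefore does not recover the statement. The robust route is to invoke complex interpolation of polynomially weighted Sobolev spaces on $\R^3$, namely $[H^s_a,H^{1+s}_b]_{1-s}=H^{1}_{sa+(1-s)b}$; with $a=k$ and $b=-ks/(1-s)$ the interpolated weight vanishes and the multiplicative form of \eqref{Ls} follows. This avoids cutoffs altogether; the commutator difficulty you correctly flag is real, and the physical-space split is not the right tool for this estimate.
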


\subsection{Boltzmann collision operator}
Here we recall the upper bound and the coercivity for the non-cutoff Boltzmann collision operator. 

\begin{lem}\label{L21}(\cite{H}, Lemma 1.1)
Let $w_1, w_2 \in \R$, $a, b \in[0, 2s]$ with $w_1+w_2 =\gamma+2s$  and $a+b =2s$. Then for any smooth functions $f, g, h$ we have \\
(1) if $\gamma + 2s >0$, then
\[
|(Q(g, h), f)_{L^2_v}| \lesssim (\Vert g \Vert_{L^1_{\gamma+2s +(-w_1)^+(-w_2)^+}}  +\Vert g \Vert_{L^2} ) \Vert h \Vert_{H^a_{w_1}}   \Vert f \Vert_{H^b_{w_2}}  ,
\]
(2) if $\gamma + 2s = 0$, then
\[
|(Q(g, h), f)_{L^2_v}|\lesssim (\Vert g \Vert_{L^1_{w_3}} + \Vert g \Vert_{L^2}) \Vert h \Vert_{H^a_{w_1}}   \Vert f \Vert_{H^b_{w_2}} , 
\]
where $w_3 = \max\{\delta,(-w_1)^+ +(-w_2)^+ \}$, with $\delta>0$ sufficiently small.\\
(3) if $-1< \gamma + 2s < 0$ we have
\[
|(Q(g, h), f)_{L^2_v}|\lesssim  (\Vert g \Vert_{L^1_{w_4}} + \Vert g \Vert_{L^2_{-(\gamma+2s)}}) \Vert h \Vert_{H^a_{w_1}}   \Vert f \Vert_{H^b_{w_2}}  
\]
where $w_4= \max\{-(\gamma+2s), \gamma+2s +(-w_1)^+ +(-w_2)^+ \}$.
\end{lem}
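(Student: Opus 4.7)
The plan is to follow the standard reduction for non-cutoff Boltzmann trilinear upper bounds, combining a dyadic angular decomposition of the kernel with a Taylor-expansion cancellation at grazing angles.

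First, I would recast the inner product via the weak formulation
$$(Q(g,h),f)_{L^2_v}=\int_{\R^3\times\R^3\times\S^2}B(v-v_*,\sigma)\,g_*\,h\,(f'-f)\,d\sigma\,dv_*\,dv,$$
obtained from the pre--post collisional change of variables, and decompose $b(\cos\theta)=b^{\mathrm{reg}}(\cos\theta)+\sum_{j\ge 0}b_j(\cos\theta)$, with each $b_j$ supported in $\{\theta\sim 2^{-j}\}$ and $b^{\mathrm{reg}}$ bounded. The regular piece is estimated directly by distributing $|v-v_*|^{\gamma+2s}$ among the three functions via $\langle v'\rangle\lesssim\langle v\rangle+\langle v_*\rangle$ and H\"older; this contributes only to an $L^1$-moment term in $g$.

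Second, for the grazing part, I would apply the Taylor expansion $f'-f=(v'-v)\cdot\nabla_v f+\tfrac12(v'-v)^{\otimes 2}:\nabla_v^2\tilde f$ and exploit the $\sigma\leftrightarrow\sigma^{\mathrm{refl}}$ symmetry of the symmetrized kernel to annihilate the first-order term. The resulting second-order expression, after a Plancherel on the angular variable, produces a quadratic form equivalent to the non-isotropic $H^s$-pairing $\|h\|_{H^a_{w_1}}\|f\|_{H^b_{w_2}}$. The derivative split $a+b=2s$ is then obtained by transfer of derivatives (integration by parts in $v$) together with interpolation between the endpoints $(a,b)=(0,2s)$ and $(a,b)=(2s,0)$, while the weight split $w_1+w_2=\gamma+2s$ is dictated by the distribution of the collisional weight across $h,f,g$.

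Third, the case split corresponds to the integrability of $|u|^{\gamma+2s}$ against $g_*$: polynomial growth when $\gamma+2s>0$, a logarithmic endpoint (treated via the $\delta$-perturbation appearing in $w_3$) when $\gamma+2s=0$, and a local singularity in the soft range $-1<\gamma+2s<0$, which is absorbed by a Hardy--Littlewood--Sobolev-type convolution estimate and therefore forces the alternative $L^2_{-(\gamma+2s)}$ norm on $g$. The main obstacle will be the bookkeeping of weights: the parameters $(-w_i)^+$ must be carefully tracked so that negative weight on $h$ or $f$ is converted into additional moments on $g$, and the angular $\sigma$-integrals must remain convergent after the cancellation has been used to its full order. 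Since the lemma is already proved in \cite[Lemma 1.1]{H}, I would follow the decomposition there essentially verbatim.
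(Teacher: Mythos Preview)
The paper does not prove this lemma at all; it is stated with a direct citation to \cite[Lemma 1.1]{H} and no argument is given. Your proposal is consistent with this---you correctly identify that the result is imported from \cite{H} and sketch the standard angular-dyadic/Taylor-cancellation machinery used there---so there is nothing to compare beyond noting that the paper treats the lemma as a black box while you outline its proof.
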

As a first application we have
\begin{cor}\label{C22}
Suppose that $\gamma > -3, \gamma +2s >-1$. For any multi-indices $|\beta | \le 2$, for any constant $k \ge 0$ and for any smooth functions $f, g$,  we have
\[
(Q(\partial_\beta \mu, f  ), g \langle  v \rangle^{2k} )\le C_k \Vert f \Vert_{H^s_{k + \gamma/2 + 2s}}  \Vert g \Vert_{H^s_{k+\gamma/2}}
\]
for some constant $C_k >0$.
\end{cor}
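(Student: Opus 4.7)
The plan is to apply Lemma~\ref{L21} directly, exploiting the fact that $\partial_\beta \mu$ is a Schwartz function so that any $L^1$ or $L^2$ norm of $\partial_\beta \mu$ with a polynomial weight is bounded by some constant $C_k$. This renders the prefactor $\Vert g \Vert_{L^1_\cdot} + \Vert g \Vert_{L^2_\cdot}$ appearing in all three cases of Lemma~\ref{L21} harmless, regardless of the sign of $\gamma+2s$: since $\gamma+2s>-1$ by hypothesis, exactly one of the subcases (1), (2), (3) of the lemma applies, and in each of them the first factor is absorbed into $C_k$.

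First, I would match the trilinear form $(Q(G,H),F)$ of Lemma~\ref{L21} with the identifications $G = \partial_\beta\mu$, $H = f$, and $F = g\langle v\rangle^{2k}$, and pick the parameters
\begin{align*}
a = b = s, \qquad w_1 = k + \gamma/2 + 2s, \qquad w_2 = \gamma/2 - k,
\end{align*}
so that the balance constraints $a+b = 2s$ and $w_1+w_2 = \gamma+2s$ are satisfied. Lemma~\ref{L21} will then yield
\begin{align*}
|(Q(\partial_\beta\mu, f), g\langle v\rangle^{2k})| \le C_k \Vert f \Vert_{H^s_{k+\gamma/2+2s}} \Vert g\langle v\rangle^{2k}\Vert_{H^s_{\gamma/2 - k}}.
\end{align*}

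Next, I would identify $\Vert g\langle v\rangle^{2k}\Vert_{H^s_{\gamma/2 - k}}$ with $\Vert g\Vert_{H^s_{k+\gamma/2}}$ up to a harmless constant, by commuting the multiplication operator $\langle v\rangle^{2k}$ past the Bessel potential $\langle D_v\rangle^s$. The principal part yields exactly $\Vert \langle v\rangle^{k+\gamma/2}\langle D_v\rangle^s g\Vert_{L^2} = \Vert g\Vert_{H^s_{k+\gamma/2}}$, while the commutator $[\langle D_v\rangle^s, \langle v\rangle^{2k}]g$ is of order $s-1$ in velocity derivatives with weight $\langle v\rangle^{2k-1}$ by standard pseudodifferential calculus, and is therefore absorbed into the same norm.

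The main technical obstacle will be making this last commutator step fully rigorous. An alternative, perhaps more in line with the non-cutoff Boltzmann literature, is to first transfer the weight inside the collision operator via the identity
\begin{align*}
(Q(\partial_\beta\mu, f), g\langle v\rangle^{2k}) = (Q(\partial_\beta\mu, f\langle v\rangle^k), g\langle v\rangle^k) + R,
\end{align*}
apply Lemma~\ref{L21} to the main term with the cleaner choice $w_1 = \gamma/2+2s$, $w_2 = \gamma/2$, and control the remainder $R$ using $|\langle v\rangle^k - \langle v'\rangle^k| \lesssim |v-v_*|\sin(\theta/2)\langle v\rangle^{k-1}$ together with the Gaussian decay of $\partial_\beta\mu(v'_*)$, which gains a factor of $\sin(\theta/2)$ taming the angular singularity and produces a strictly lower-order contribution.
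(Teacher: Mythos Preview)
Your proposal is correct and follows exactly the approach the paper intends: the corollary is stated immediately after Lemma~\ref{L21} with the words ``As a first application we have,'' and no further proof is given, so the paper treats it as a direct consequence of Lemma~\ref{L21} with $G=\partial_\beta\mu$. Your choice of parameters $a=b=s$, $w_1=k+\gamma/2+2s$, $w_2=\gamma/2-k$ is the natural one, and the equivalence $\Vert g\langle v\rangle^{2k}\Vert_{H^s_{\gamma/2-k}}\sim\Vert g\Vert_{H^s_{k+\gamma/2}}$ via the commutator $[\langle D\rangle^s,\langle v\rangle^{2k}]$ is a standard weighted--Sobolev fact that the paper evidently takes for granted; your alternative route via $(Q(\partial_\beta\mu,f\langle v\rangle^k),g\langle v\rangle^k)+R$ is also viable but unnecessary here.
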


\begin{lem} \label{L23}(\cite{CHJ} Lemma 3.3)
Suppose that $-3 <\gamma \le 1$. For any $k \ge 14$, and smooth functions $g, h$, we have
\begin{equation*}
\begin{aligned}
|(Q (h, \mu), g \langle v \rangle^{2k}) | \le&  \gamma_3 \Vert b(\cos\theta) \sin^{k-\frac {3+\gamma} 2} \frac \theta 2 \Vert_{L^1_\theta}    \Vert h \Vert_{L^2_{k+\gamma/2}}\Vert g \Vert_{L^2_{k+\gamma/2}} + C_k \Vert h \Vert_{L^2_{k+\gamma/2-1/2}}\Vert g \Vert_{L^2_{k+\gamma/2-1/2}}
\\
\le& 2\gamma_3 \Vert b(\cos\theta) \sin^{k-\frac {3+\gamma} 2} \frac \theta 2 \Vert_{L^1_\theta}    \Vert h \Vert_{L^2_{k+\gamma/2}}\Vert g \Vert_{L^2_{k+\gamma/2}} + C_k \Vert h \Vert_{L^2}  \Vert g \Vert_{L^2}
\end{aligned}
\end{equation*}
for some constant $C_k>0$, where $\gamma_3$ is the constant such that 
\[
\int_{\R^3} \mu(v) |v-v_*|^\gamma dv \le \gamma_3 \langle v_* \rangle^{\gamma}
\]
holds. Moreover, for any $|\beta| \le 2$ we have
\[
|(Q (h, \partial_\beta\mu), g \langle v \rangle^{2k}) | \le  C_k \Vert h \Vert_{L^2_{k+\gamma/2}}\Vert g \Vert_{L^2_{k+\gamma/2}} 
\]
\end{lem}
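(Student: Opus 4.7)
The plan is to follow the classical approach for this type of weighted Boltzmann estimate, going back to Arkeryd and Wennberg, and refined by Alexandre--Morimoto--Ukai--Xu--Yang: rewrite the weighted inner product using the pre/post-collisional change of variable so that a cancellation appears, then split the resulting integrand into a principal part carrying the sharp angular constant $\gamma_3$ and a lower-order remainder.

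Starting from the weak formulation
\[
(Q(h,\mu),g\langle v\rangle^{2k})_{L^2_v} = \iiint B(v-v_*,\sigma)(h'_*\mu' - h_*\mu)\,g\,\langle v\rangle^{2k}\,d\sigma\,dv_*\,dv,
\]
I would apply the involutive change of variable $(v,v_*)\leftrightarrow(v',v'_*)$ (unit Jacobian, swapping gain and loss terms) to rewrite the expression as
\[
\iiint B\,h_*\,\mu\,\bigl[g'\langle v'\rangle^{2k} - g\langle v\rangle^{2k}\bigr]\,d\sigma\,dv_*\,dv,
\]
and split the bracket as $g\bigl(\langle v'\rangle^{2k}-\langle v\rangle^{2k}\bigr) + \langle v'\rangle^{2k}(g'-g)$. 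For the weight-difference piece, the energy identity $\langle v\rangle^2+\langle v_*\rangle^2 = \langle v'\rangle^2+\langle v'_*\rangle^2$ together with elementary convexity yields a pointwise bound of the form $\langle v'\rangle^{2k}\le \cos^{2k}(\theta/2)\langle v\rangle^{2k} + C_k\sin^{2k}(\theta/2)\langle v_*\rangle^{2k}$, so $\langle v'\rangle^{2k}-\langle v\rangle^{2k}$ is dominated by a $\sin^2(\theta/2)$-localized negative term plus a piece in which $\mu(v_*)\langle v_*\rangle^{2k}$ remains integrable. Cauchy--Schwarz in $v$ and integration in $v_*$ against $\mu(v_*)|v-v_*|^\gamma \le \gamma_3\langle v\rangle^\gamma$ then produce exactly the sharp prefactor $\gamma_3\|b\sin^{k-(3+\gamma)/2}(\theta/2)\|_{L^1_\theta}$ multiplying $\|h\|_{L^2_{k+\gamma/2}}\|g\|_{L^2_{k+\gamma/2}}$. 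For the $g'-g$ piece, a one-parameter Taylor expansion combined with the geometric bound $|v'-v|\le|v-v_*|\sin(\theta/2)$ trades one extra factor of $\sin(\theta/2)$ for one half-power of $\langle v\rangle$, placing this contribution inside the lower-order remainder $C_k\|h\|_{L^2_{k+\gamma/2-1/2}}\|g\|_{L^2_{k+\gamma/2-1/2}}$. The second displayed inequality then follows from the interpolation $\|f\|_{L^2_{k+\gamma/2-1/2}}^2 \le \varepsilon\|f\|_{L^2_{k+\gamma/2}}^2 + C_\varepsilon\|f\|_{L^2}^2$ applied to both $h$ and $g$, absorbing the $\varepsilon$ contribution into the leading term (which is why the constant doubles to $2\gamma_3$).

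The estimate with $\partial_\beta\mu$ in place of $\mu$ for $|\beta|\le 2$ follows the same scheme verbatim: since $\partial_\beta\mu$ enjoys the same Gaussian decay as $\mu$, there is no need to preserve the sharp constant, and the full contribution can simply be absorbed into $C_k\|h\|_{L^2_{k+\gamma/2}}\|g\|_{L^2_{k+\gamma/2}}$. The main obstacle is the pointwise weight estimate reproducing the exact angular exponent $k-(3+\gamma)/2$: this requires careful bookkeeping of the interplay between the $|v-v_*|^\gamma$ factor, the collisional Jacobian, and the $\sin^{2k}(\theta/2)\langle v_*\rangle^{2k}$ localization during the $v_*$-integration, and it is what forces the quantitative lower bound $k\ge 14$ in order to keep the exponent $k-(3+\gamma)/2$ large enough for the angular singularity to be tamed by the localization. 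Since the statement is essentially quoted from \cite[Lemma 3.3]{CHJ}, in practice I would simply invoke that reference rather than reproduce the computation.
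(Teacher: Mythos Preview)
Your proposal is correct and matches the paper's treatment exactly: the paper does not give a proof either but simply cites \cite[Lemma 3.3]{CHJ}, adding only the remark that the second statement with $\partial_\beta\mu$ follows by the same argument with $\mu$ replaced by $\partial_\beta\mu$. Your final sentence---that in practice you would just invoke the reference---is precisely what the paper does, and your observation about the $\partial_\beta\mu$ case is the content of the paper's Remark following the lemma.
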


\begin{rmk}
In \cite{CHJ} Lemma 3.3  the authors only prove the first statement of Lemma \ref{L23}, but the second statement can be proved the same way by just replacing $\mu$ by $\partial_\beta \mu$.
\end{rmk}
\begin{thm} \label{T25}(\cite{CHJ} Theorem 3.1)
Suppose that $-3<\gamma\le 1, \gamma+2s>-1$ and $G= \mu +g \ge 0$. Then if 
\[
G \ge 0,\quad  \Vert G \Vert_{L^1} \ge 1/2, \quad \Vert G \Vert_{L^1_2} +\Vert G \Vert_{L \log L} \le 4.
\]
we have
\begin{equation}
\label{ineq1}
\begin{aligned}
(Q(G, f), f \langle v \rangle^{2k} )\le&  - \frac {\gamma_2} {8} \Vert  b(\cos \theta) (1- \cos^{2k-3-\gamma} \frac \theta 2  )\Vert_{L^1_\theta}\Vert f \Vert_{L^2_{k+\gamma/2}}^2  - \gamma_1 \Vert f \Vert_{H^s_{k+\gamma/2}}^2 + C_k  \Vert f \Vert_{L^2_{k+\gamma/2-1/2}}^2
\\
&+C_k\Vert f \Vert_{L^2_{14}} \Vert g \Vert_{H^s_{ k+\gamma/2 }}\Vert f \Vert_{H^s_{ k + \gamma/2}} +C_k\Vert g \Vert_{L^2_{14} } \Vert f \Vert_{H^s_{ k + \gamma/2}}^2
\\
\le&  - \frac {\gamma_2} {12} \Vert  b(\cos \theta) (1- \cos^{2k-3-\gamma} \frac \theta 2  )\Vert_{L^1_\theta}\Vert f \Vert_{L^2_{k+\gamma/2}}^2  - \gamma_1 \Vert f \Vert_{H^s_{k+\gamma/2}}^2 + C_k  \Vert f \Vert_{L^2}^2
\\
&+C_k\Vert f \Vert_{L^2_{14}} \Vert g \Vert_{H^s_{ k+\gamma/2 }}\Vert f \Vert_{H^s_{ k + \gamma/2}} +C_k\Vert g \Vert_{L^2_{14} } \Vert f \Vert_{H^s_{ k + \gamma/2}}^2
\end{aligned}
\end{equation}
for some constants $\gamma_1, \gamma_2, C_k>0$, where $\gamma_2$ is the constant such that 
\[
\gamma_2 \langle v \rangle^\gamma \le \int_{\R^3} |v-v_*|^\gamma \mu_* dv_* .
\]
\end{thm}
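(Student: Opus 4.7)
The plan is to split $G = \mu + g$, handle the Maxwellian piece $Q(\mu,f)$ with a Povzner-type weighted coercivity, and control the perturbation $Q(g,f)$ via the general upper bound in Lemma~\ref{L21}. The hypotheses $\|G\|_{L^1}\ge 1/2$, $\|G\|_{L^1_2}+\|G\|_{L\log L}\le 4$ enter the Maxwellian part through the collision-frequency lower bound (the constant $\gamma_2$ in the theorem statement), via an entropy/mass argument as in \cite{AMUXY} adapted to ensure $\int |v-v_*|^\gamma G_*\,dv_* \gtrsim \langle v\rangle^\gamma$.

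For the Maxwellian part, I would write
\[
(Q(\mu,f), f\langle v\rangle^{2k}) = \int\!\!\int\!\!\int B\, \mu_*\, (f' - f)\, f\, \langle v\rangle^{2k}\,d\sigma\,dv_*\,dv
\]
and use the cancellation identity
\[
(f'-f)f\langle v\rangle^{2k} = \tfrac12\bigl(f'^2\langle v'\rangle^{2k}-f^2\langle v\rangle^{2k}\bigr) - \tfrac12 f'^2\bigl(\langle v'\rangle^{2k}-\langle v\rangle^{2k}\bigr) - \tfrac12(f'-f)^2\langle v\rangle^{2k}.
\]
After symmetrizing in $(v,v_*)\leftrightarrow(v',v'_*)$ on the first bracket, the term in $(f'-f)^2$ produces the non-cutoff coercivity $-\gamma_1\|f\|_{H^s_{k+\gamma/2}}^2$ (combined with the lower bound on the collision frequency from the hypothesis on $G$), while the term proportional to $\langle v'\rangle^{2k}-\langle v\rangle^{2k}$ gives the Povzner moment-gain. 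The key geometric computation is that, on $\theta\in[0,\pi/2]$,
\[
\langle v'\rangle^{2}+\langle v'_*\rangle^{2}=\langle v\rangle^{2}+\langle v_*\rangle^{2},\qquad \langle v'\rangle^{2}\le \cos^{2}\tfrac{\theta}{2}\langle v\rangle^{2}+\sin^{2}\tfrac{\theta}{2}\langle v_*\rangle^{2},
\]
which after binomial expansion and integration against $\mu_*|v-v_*|^\gamma$ extracts exactly the coefficient $\|b(\cos\theta)(1-\cos^{2k-3-\gamma}\tfrac{\theta}{2})\|_{L^1_\theta}$ times $\|f\|_{L^2_{k+\gamma/2}}^2$. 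Lower-order remainders from the binomial tail are absorbed into $C_k\|f\|_{L^2_{k+\gamma/2-1/2}}^2$, and then into $C_k\|f\|_{L^2}^2$ after interpolation against the $H^s_{k+\gamma/2}$ coercivity, giving the second inequality in the statement.

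For the perturbation piece $(Q(g,f), f\langle v\rangle^{2k})$, I would apply Lemma~\ref{L21} with $w_1=w_2=k+\gamma/2$, $a=b=s$. In each of the three regimes ($\gamma+2s>0$, $=0$, $\in(-1,0)$), the $L^1$-moment norm of $g$ appearing on the right is dominated, via Cauchy--Schwarz, by $\|g\|_{L^2_{14}}$ thanks to $k\ge 14$; this yields the bound
\[
|(Q(g,f), f\langle v\rangle^{2k})| \lesssim \|g\|_{L^2_{14}}\|f\|_{H^s_{k+\gamma/2}}^2 + \|f\|_{L^2_{14}}\|g\|_{H^s_{k+\gamma/2}}\|f\|_{H^s_{k+\gamma/2}},
\]
where the second term comes from redistributing one derivative onto $g$ when using the symmetric formulation of $Q$ (this is the source of the asymmetric-looking nonlinear terms in \eqref{ineq1}).

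\textbf{Main obstacle.} The delicate step is the Povzner extraction of the sharp coefficient $\|b(1-\cos^{2k-3-\gamma}\tfrac{\theta}{2})\|_{L^1_\theta}$: one must ensure that the binomial remainder after expanding $\langle v'\rangle^{2k}-\cos^{2k-3-\gamma}\tfrac{\theta}{2}\langle v\rangle^{2k}$ is genuinely of strictly lower weight $L^2_{k+\gamma/2-1/2}$, so that it can be absorbed by the $H^s$ gain. This requires a careful angular splitting (grazing versus non-grazing) and use of the lower bound $\gamma_2 \langle v\rangle^\gamma \le \int|v-v_*|^\gamma\mu_*\,dv_*$. Once this is achieved, combining the two estimates and choosing the constants so that $\gamma_2/8$ degrades to $\gamma_2/12$ after handling the lower-order remainder completes the proof.
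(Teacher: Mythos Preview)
The paper does not prove this theorem: it is quoted verbatim from \cite{CHJ}, Theorem~3.1, and no argument is given here. Your outline---split $G=\mu+g$, extract Povzner moment gain and $H^s$ coercivity from the $\mu$-piece, and bound the $g$-piece by a trilinear estimate---is indeed the skeleton of the argument in \cite{CHJ}. Two points, however, need correction.

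First, a concrete gap in the perturbation estimate. You propose to apply Lemma~\ref{L21} with $w_1=w_2=k+\gamma/2$, but that lemma requires $w_1+w_2=\gamma+2s$, so this choice is inadmissible for $k\ge 14$. Lemma~\ref{L21} is an \emph{unweighted} trilinear bound; to treat $(Q(g,f),f\langle v\rangle^{2k})$ one must first commute the weight through the collision operator, which is exactly the content of Lemma~\ref{L26}. Even then, Lemma~\ref{L26} yields $\|g\|_{L^2_{14}}\|f\|_{H^s_{k+\gamma/2+2s}}\|f\|_{H^s_{k+\gamma/2}}$, whereas the statement you are proving has the sharper $\|g\|_{L^2_{14}}\|f\|_{H^s_{k+\gamma/2}}^2$. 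That improvement of $2s$ in the weight is not automatic: it comes from exploiting the specific structure $h=f$ (a cancellation in the commutator $\langle v\rangle^{2k}-\langle v'\rangle^k\langle v\rangle^k$ when the second and third arguments coincide), and your proposal does not indicate how this is obtained.

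Second, a minor misattribution. The constant $\gamma_2$ is defined via $\mu$ alone, so the hypotheses $\|G\|_{L^1}\ge 1/2$, $\|G\|_{L^1_2}+\|G\|_{L\log L}\le 4$ do \emph{not} enter through $\gamma_2$. In fact, under your $\mu+g$ splitting these hypotheses are not used at all: the coercivity $-\gamma_1\|f\|_{H^s_{k+\gamma/2}}^2$ you extract from $(f'-f)^2\mu_*$ depends only on $\mu$. The $G$-hypotheses are stated because in \cite{CHJ} the coercivity is derived directly from $Q(G,f)$ via the ADVW entropy-dissipation mechanism, which needs non-concentration of $G$; with your decomposition they are simply redundant.
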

\begin{lem} \label{L26}(\cite{CHJ} Lemma 3.4 + Lemma \ref{L21})
For any smooth functions $f, g, h$ and $k \ge 12$ , we have
\[
(Q(f, g), h \langle v \rangle^{2k} ) \lesssim \Vert f \Vert_{L^2_{14}} \Vert g \Vert_{H^s_{k+\gamma/2+2s }}  \Vert h \Vert_{H^s_{k+\gamma/2}} +  \Vert g \Vert_{L^2_{14}}  \Vert f \Vert_{H^s_{k+\gamma/2}}  \Vert h \Vert_{H^s_{k+\gamma/2}} .
\]
In particular by duality we have
\[
\Vert Q(f, g) \Vert_{H^{-s}_{k-\gamma/2}} \lesssim  \Vert f \Vert_{L^2_{14}} \Vert g \Vert_{H^s_{k+\gamma/2+2s }}  +  \Vert g \Vert_{L^2_{14}}  \Vert f \Vert_{H^s_{k+\gamma/2}}  
\]
\end{lem}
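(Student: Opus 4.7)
The plan is to reduce the weighted trilinear estimate to the unweighted upper bound of Lemma \ref{L21} by evenly distributing the polynomial weight $\langle v\rangle^{2k}$ between $g$ and $h$, and then absorbing the weight-transfer commutator via the angular cancellation of \cite{CHJ} Lemma 3.4. Writing $W(v) := \langle v\rangle^k$ and using the algebraic identity
\[
g' W(v)^2 h - g W(v)^2 h = \bigl(g' W(v') - g W(v)\bigr)\bigl(W(v)h\bigr) + g' W(v)\bigl(W(v) - W(v')\bigr) h,
\]
I split
\[
(Q(f,g), h\langle v\rangle^{2k}) = (Q(f, gW), hW) + \mathcal{C}(f,g,h),
\]
where $\mathcal{C}$ collects the integral of $B(v-v_*,\sigma) f_*\, g'\, W(v)(W(v)-W(v'))\, h$ over $\R^3\times\R^3\times\S^2$.

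For the main piece $(Q(f, gW), hW)$ I apply Lemma \ref{L21} with $a=b=s$ and the weight splitting $w_1 = \gamma/2 + 2s$, $w_2 = \gamma/2$. This yields $\|gW\|_{H^s_{w_1}} = \|g\|_{H^s_{k+\gamma/2+2s}}$ and $\|hW\|_{H^s_{w_2}} = \|h\|_{H^s_{k+\gamma/2}}$, while the first-argument factor $\|f\|_{L^1_{\cdots}} + \|f\|_{L^2_{\cdots}}$ appearing in all three cases ($\gamma+2s$ positive, zero, or slightly negative) is dominated by $\|f\|_{L^2_{14}}$ via Cauchy--Schwarz, since with $k\ge 12$ the required $v$-moment is well below $14$. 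This produces the first term of the target estimate.

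The main obstacle is the commutator $\mathcal{C}$. The leading Taylor expansion gives $W(v)-W(v') = O(\sin(\theta/2)\,|v-v_*|\,\langle v\rangle^{k-1})$, whose first-order part is odd in the angular variable and is killed by symmetrizing $\sigma \leftrightarrow -\sigma$ together with assumption $\mathbf{(A4)}$, leaving an $O(\sin^2(\theta/2)\,|v-v_*|^2\langle v\rangle^{k-2})$ remainder integrable against $b(\cos\theta)\sim \theta^{-1-2s}$ for all $s<1$. Redistributing the resulting factor $|v-v_*|^2$ between the two velocities and using the regular change of variables $(v,v_*)\mapsto(v',v'_*)$ to swap the roles of $f$ and $g$, one obtains a bound of the form $\|f\|_{L^2_{14}}\|g\|_{H^s_{k+\gamma/2}}\|h\|_{H^s_{k+\gamma/2}} + \|g\|_{L^2_{14}}\|f\|_{H^s_{k+\gamma/2}}\|h\|_{H^s_{k+\gamma/2}}$; this is precisely the content of \cite{CHJ} Lemma 3.4 transported to the present notation, and its symmetric part supplies the second term in the statement. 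The moment $14$ comes from the fact that $k\ge 12$ leaves slack $\ge 2$ to absorb the extra $\langle v\rangle^2$ produced by the remainder.

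The duality statement is then immediate: the pairing in the left-hand side of the primal estimate can be read as $\langle Q(f,g),\, h\rangle$ with respect to the $\langle v\rangle^{2k}$-weighted $L^2$ inner product, whose dual of $H^s_{k+\gamma/2}$ is $H^{-s}_{k-\gamma/2}$. Taking the supremum over $h$ with $\|h\|_{H^s_{k+\gamma/2}}\le 1$ in the trilinear bound therefore yields the stated $\|Q(f,g)\|_{H^{-s}_{k-\gamma/2}}$ estimate without further work.
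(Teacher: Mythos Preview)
Your approach is correct and matches what the paper intends: the lemma is stated without proof and simply cites \cite{CHJ} Lemma 3.4 together with Lemma \ref{L21}, which is precisely your decomposition into the main piece $(Q(f,gW),hW)$ handled by Lemma \ref{L21} and the weight-transfer commutator $\mathcal{C}$ handled by \cite{CHJ} Lemma 3.4. The duality argument is also correct.

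Two small points of precision, neither of which affects the validity of the strategy. First, in your expression for $\mathcal{C}$ the factor should be $f'_*$ rather than $f_*$: the loss parts of $(Q(f,g),hW^2)$ and $(Q(f,gW),hW)$ cancel exactly, so the commutator is purely a gain term $\int B\, f'_*\, g'\, W(v)(W(v)-W(v'))\, h$. Second, the change of variables $(v,v_*)\mapsto (v',v'_*)$ exchanges pre- and post-collisional velocities, not the roles of $f$ and $g$; the way the second term (with the $k$-weight on $f$) actually arises is via the elementary inequality $\langle v\rangle^{m} \lesssim \langle v'\rangle^{m} + \langle v'_*\rangle^{m}$ coming from energy conservation, which transfers the large weight from $v$ to $v'_*$ where $f'_*$ sits.
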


\subsection{Weight function and two collision operators}
We first talk on the choice of $w(\alpha, \beta)$, we have the following properties
\begin{lem}\label{L27}
Suppose $|\alpha|, |\beta|, |a|, |b|, |c|, |d|, k$ are non-negative integers, then $w(\alpha, \beta) $ satisfies the following properties:
\begin{equation}\label{equationw1}
w(|\alpha|, |\beta|) \langle v \rangle^{2s}\le  w(|\alpha|, |\beta_1|), \quad w(|\alpha|, |\beta|) \langle v \rangle^{5s}  \le  w(|\alpha_1|, |\beta|), \quad \forall |\alpha_1| < |\alpha|, |\beta_1 | < |\beta|
\end{equation}
and
\begin{equation}\label{equationw2}
w(|\alpha|, |\beta|) =\langle v \rangle^{\min \{ \gamma, 0 \} } w (|\alpha|+1, |\beta|-1 ), \quad \forall |\alpha| \ge 0, \quad \forall |\beta| \ge 1
\end{equation}
in particular, \eqref{equationw2} implies 
\begin{equation}\label{equationw3}
w(|\alpha|, |\beta|) \ge w(|\alpha| -k ,|\beta|  +  k ), \quad 0 \le k \le |\alpha|
\end{equation}
also we have
\begin{equation}\label{equationw4}
\langle v \rangle^{k+6} \le w(|\alpha|, |\beta|), \quad \forall |\alpha| + |\beta| \le 2 
\end{equation}
Finally we have
\begin{equation}\label{equationw5}
\langle v \rangle w(|\alpha|, |\beta|) \le  w(|\alpha|-1, |\beta|)^s w(|\alpha|-1,|\beta|+1)^{1-s} \langle v \rangle^{\gamma/2}, \quad \forall 1 \le  |\alpha| , \quad 0 \le  |\beta| 
\end{equation}
\end{lem}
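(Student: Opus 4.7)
All five inequalities are ultimately statements about exponents of $\langle v\rangle$, so the plan is to reduce each one to a linear inequality in $|\alpha|, |\beta|, s, \gamma$ and then verify it against the defining values $b = \max\{-2\gamma,0\}/s + 5$, $a = b + \min\{\gamma,0\}$, $c = 2b+6$. The point is that $a, b, c$ were engineered precisely to make these bounds hold, so no analytic machinery is needed; it is purely arithmetic bookkeeping.

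First I would handle \eqref{equationw2} and \eqref{equationw3}, which are the algebraic backbone. The identity \eqref{equationw2} holds iff $-a + b + \min\{\gamma,0\} = 0$, which is exactly the definition $a = b + \min\{\gamma,0\}$. Iterating \eqref{equationw2} backwards $k$ times gives $w(|\alpha|-k,|\beta|+k) = \langle v\rangle^{k\min\{\gamma,0\}} w(|\alpha|,|\beta|)$, and since $\min\{\gamma,0\} \le 0$ we get \eqref{equationw3}. For \eqref{equationw1}, the inequality $w(|\alpha|,|\beta|)\langle v\rangle^{2s} \le w(|\alpha|,|\beta_1|)$ reduces to $b(|\beta|-|\beta_1|) \ge 2s$; since $|\beta|-|\beta_1|\ge 1$ and $b\ge 5 > 2s$ this is immediate, and likewise $a\ge 5s$ yields the second half (checking both cases $\gamma\ge 0$ where $a=b\ge 5$ and $\gamma<0$ where $a = -2\gamma/s + 5 + \gamma$ is still $\ge 5$ since $s<1$). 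For \eqref{equationw4}, one checks the three subcases $|\alpha|+|\beta|\le 2$ and confirms $a|\alpha|+b|\beta|\le 2b$ using only $a\le b$, which again follows from $\min\{\gamma,0\}\le 0$.

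The last inequality \eqref{equationw5} is the delicate one and I expect to be the main obstacle in the sense of bookkeeping, because it is where the assumption $s\ge 1/2$ actually gets used. Comparing exponents on both sides, the inequality reduces to $1 \le a - (1-s)b + \gamma/2$, i.e.\ $sb \ge 1 - \gamma/2 - \min\{\gamma,0\}$. In the hard potential case $\gamma\ge 0$ this is $sb \ge 1 - \gamma/2$, which holds since $sb \ge 5s \ge 5/2$. In the soft case $\gamma<0$, substituting $b = -2\gamma/s + 5$ gives $sb = -2\gamma + 5s$ and the required inequality becomes $5s - \gamma/2 \ge 1$, which holds because $s\ge 1/2$ and $-\gamma/2>0$. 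So the choices of $a$ and $b$ together with $s\ge 1/2$ are exactly what make \eqref{equationw5} work, and this is where the assumption $s\ge 1/2$ enters at the level of weights (matching its later use in the Vlasov--Maxwell semigroup analysis).

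In summary, I would organize the proof as: (i) verify \eqref{equationw2} directly from the definition of $a$; (ii) iterate to get \eqref{equationw3}; (iii) read off \eqref{equationw1} and \eqref{equationw4} from $b\ge 2s$, $a\ge 5s$, and $a\le b$; (iv) carry out the exponent comparison for \eqref{equationw5}, splitting into the two sign cases of $\gamma$ and invoking $s\ge 1/2$ in the soft case. No step requires more than elementary manipulation; the only real care needed is in \eqref{equationw5}, where tracking the interpolation exponent $1-s$ against the shift $-\gamma/2$ is the place where the constants were tuned.
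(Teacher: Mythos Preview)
Your proposal is correct and follows essentially the same route as the paper: each of the five statements is reduced to a linear inequality among exponents of $\langle v\rangle$ and then checked against the definitions of $a$, $b$, $c$, with \eqref{equationw2} coming from $a=b+\min\{\gamma,0\}$, \eqref{equationw3} by iteration, \eqref{equationw1} and \eqref{equationw4} from $a,b\ge 5s$ and $a\le b$, and \eqref{equationw5} from the reduction to $bs\ge 1-\min\{\gamma,0\}-\gamma/2$. One small over-reading: your claim that \eqref{equationw5} is ``where $s\ge 1/2$ actually gets used'' is a bit strong---the exponent check only needs roughly $5s\ge 1$, and the paper simply says ``we easily conclude from the definition of $b$'' without singling out $s\ge 1/2$ here; the genuine need for $s\ge 1/2$ arises later in the semigroup estimate, as you also note.
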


\begin{proof}
\eqref{equationw1} is from the fact that
$a \ge \max \{-2\gamma, 0 \}+ 5s + \min \{\gamma, 0 \} \ge 5s  , \quad b \ge 5s$.\\
\eqref{equationw2} is just follow form the fact that
\[
k- a|\alpha|  - b|\beta| + c = k - a(|\alpha| + 1)  - b ( |\beta| - 1)  +  c  + \min \{ \gamma, 0\}  \quad \Longleftrightarrow  \quad  b -a +\min \{ \gamma, 0\} =0
\]
and we conclude from the definition of $a$. Then fourth statement just by
\[
\langle v \rangle^{k+6} \le w(0, 2) = \min_{|\alpha| +|\beta| \le 2} \{ w(|\alpha|, |\beta|) \}
\]
and the last statement is equivalent to 
\[
1+k- a|\alpha|  - b|\beta| + c \le (k - a(|\alpha| -  1)  - b  |\beta|   +  c )s  + (k - a(|\alpha| - 1)  - b ( |\beta| + 1)  +  c )(1-s) +\gamma/2
\]
which is equivalent to 
\[
1 \le a s + a (1-s) - b (1-s) +\gamma/2   \quad \Longleftrightarrow  \quad   1 \le b s + (a-b)  + \gamma/2  = b s  +  \min \{ \gamma, 0\}  + \gamma/2
\]
we easily conclude from the definition of $b$.
\end{proof}

\begin{lem}\label{L58}
For any $-3\le\gamma\le 1$, $Q$ as related Landau operator and $k \ge 10$ large. There exists a constant $C_k>0$, such that for any smooth function $f, g$:

(1) For any $|\alpha| + |\beta| \le 2$, 
\begin{equation*}
\begin{aligned} 
(\partial^\alpha Q ( \mu, f),  \partial^\alpha f w^2(\alpha, \beta) ) &\le  - \gamma_1 \Vert \partial^\alpha_\beta  f w(\alpha, \beta)\Vert_{L^2_x H^1_{*}}^2 + C_{k}  \Vert \partial^\alpha_\beta f \Vert_{L^2_x L^2_v}^2
\end{aligned}
\end{equation*}

(2)  For any $|\alpha| \ge 0, |\beta| \le 2, $
\[
| \partial^\alpha ( Q (g,  f), \partial^{\alpha}   f w^2 (\alpha, \beta))_{L^2_{x, v}} | \le C_k \Vert g \langle v \rangle^7 \Vert_{H^2_{x, v}} D^f_{2,k}.
\]
\end{lem}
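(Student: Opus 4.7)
I would use a Leibniz expansion followed by coercivity plus absorption. Because $\mu = \mu(v)$ is $x$-independent, the $x$-derivatives in $\partial^\alpha$ commute with $Q(\mu, \cdot)$, giving
\[
\partial^\alpha_\beta Q(\mu, f) \;=\; \sum_{\beta_1 \le \beta} \binom{\beta}{\beta_1}\, Q\bigl(\partial^{\beta_1}\mu,\, \partial^\alpha_{\beta - \beta_1} f\bigr).
\]
I would pair this identity with $\partial^\alpha_\beta f \, w^2(\alpha,\beta)$ and integrate over $\T^3_x \times \R^3_v$. The principal term ($\beta_1 = 0$) is treated pointwise in $x$ by the coercivity bound in Lemma~\ref{L51}, yielding the required $-\gamma_1 \|\partial^\alpha_\beta f\, w(\alpha,\beta)\|^2_{L^2_x H^1_*} + C_k \|\partial^\alpha_\beta f\|^2_{L^2_{x,v}}$ after integration in $x$. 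Each cross term with $|\beta_1| \ge 1$ is bounded via the trilinear estimate $(Q(f_1, f_2), f_3 \langle v\rangle^{2k}) \le C\|f_1\|_{L^2_7}\|f_2\|_{H^1_{*,k+1}}\|f_3\|_{H^1_{*,k}}$ from Lemma~\ref{L51}: since $\partial^{\beta_1}\mu$ is still Gaussian-decaying, its $L^2_7$ norm is a finite constant, and Young's inequality then absorbs the $\|\partial^\alpha_\beta f\|_{H^1_{*,k}}$ factor into the coercive contribution. The residual $\|\partial^\alpha_{\beta-\beta_1} f\|^2_{H^1_{*,k+1}}$ involves strictly fewer $v$-derivatives, and summed over the finitely many pairs with $|\alpha|+|\beta|\le 2$, together with the hierarchy of constants $C_{|\alpha|,|\beta|}$, it is reorganized into the coercive side up to a large-constant multiple of $\|\partial^\alpha_\beta f\|^2_{L^2_{x,v}}$ using the weight relations of Lemma~\ref{L27}.

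\textbf{Plan for Part (2).} The same Leibniz expansion
\[
\partial^\alpha Q(g,f) \;=\; \sum_{\alpha_1 \le \alpha}\binom{\alpha}{\alpha_1}\, Q\bigl(\partial^{\alpha_1} g,\, \partial^{\alpha-\alpha_1} f\bigr)
\]
together with the trilinear bound of Lemma~\ref{L51} would give the desired estimate. The $g$-factor $\|\partial^{\alpha_1} g \cdot \langle v\rangle^7\|_{L^\infty_x L^2_v}$ (or $\|\cdot\|_{L^3_x L^2_v}$ when $|\alpha_1|=2$) is controlled by $\|g\langle v\rangle^7\|_{H^2_{x,v}}$ through the Sobolev embeddings $H^2_x \hookrightarrow L^\infty_x$ and $H^1_x \hookrightarrow L^3_x$, exactly producing the multiplier in the stated bound. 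The two remaining factors $\|\partial^{\alpha-\alpha_1} f\|_{H^1_{*,k+1}}$ and $\|\partial^\alpha f\|_{H^1_{*,k}}$ each fit, after rearranging polynomial weights by Lemma~\ref{L27} so as to match $w(\alpha, \beta)$, into $\sqrt{D^f_{2,k}}$, and a Cauchy--Schwarz in $x$ delivers the claimed $C_k \|g\langle v\rangle^7\|_{H^2_{x,v}}\, D^f_{2,k}$.

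\textbf{Main obstacle.} The delicate step in both parts is ensuring that the polynomial weight $w(\alpha,\beta)$ remains compatible with the derivative redistribution from Leibniz's rule and with the $v$-weights carried by the $H^1_*$ norm. The identity $w(|\alpha|,|\beta|) = \langle v\rangle^{\min\{\gamma,0\}}\, w(|\alpha|+1,|\beta|-1)$ from Lemma~\ref{L27} is essential: it permits trading a $v$-derivative for an $x$-derivative at the cost of only a bounded polynomial factor, which is precisely what makes the cross-term absorption in Part (1) feasible. Selecting the hierarchy of constants $C_{|\alpha|,|\beta|}$ so that cross-term contributions are dominated by the principal coercive term across all $|\alpha|+|\beta|\le 2$ is the final technical hurdle in closing the estimate.
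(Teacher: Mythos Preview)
Your strategy---Leibniz expansion, the coercivity and trilinear bounds of Lemma~\ref{L51}, Sobolev embeddings in $x$, and the weight relations of Lemma~\ref{L27}---is exactly the route the paper takes. Part~(1) is fine as you describe it; the paper is equally informal about how the cross terms are finally absorbed, and your appeal to Young's inequality together with the hierarchy $C_{|\alpha|,|\beta|}$ is the honest way to close it.

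There is, however, a concrete gap in your plan for Part~(2). For the principal term (all derivatives on $f$, none on $g$) you invoke the general trilinear estimate $(Q(g,h_1),h_2\langle v\rangle^{2\ell}) \lesssim \|g\|_{L^2_7}\|h_1\|_{H^1_{*,\ell+1}}\|h_2\|_{H^1_{*,\ell}}$, which leaves you with a factor $\|\partial^\alpha_\beta f\, w(\alpha,\beta)\langle v\rangle\|_{H^1_*}$ carrying one extra polynomial weight. At the top order $|\alpha|+|\beta|=2$ there is no lower-derivative slot to which Lemma~\ref{L27} can transfer that extra $\langle v\rangle$, so this factor does \emph{not} embed into $\sqrt{D^f_{2,k}}$. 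The paper avoids this by using the sharper bound from Lemma~\ref{L51} valid when the second and third arguments coincide,
\[
(Q(g,h),h\langle v\rangle^{2\ell}) \le C\|g\|_{L^2_7}\,\|h\|_{H^1_{*,\ell}}^2,
\]
which keeps both $f$-factors at weight exactly $w(\alpha,\beta)$ and immediately gives $\|g\|_{H^2_xL^2_7}\,D^f_{2,k}$ after taking $\|g\|_{L^\infty_x L^2_7}\lesssim\|g\|_{H^2_xL^2_7}$. You should use this improved estimate for the $\alpha_1=\beta_1=0$ piece.

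A secondary remark: the stated left-hand side of Part~(2) is a typo in the paper; the proof actually treats $(\partial^\alpha_\beta Q(g,f),\partial^\alpha_\beta f\,w^2(\alpha,\beta))$, so the Leibniz expansion must run over both $\alpha$ and $\beta$, not just $\alpha$. The additional $\beta$-cases are handled exactly as you outline for the $\alpha$-cases, using the same $L^6_x$--$L^3_x$ and $L^2_x$--$L^\infty_x$ pairings together with the weighted interpolation inequalities \eqref{Linfty} and \eqref{L3}.
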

\begin{proof}
Notice that
\[
(\partial^\alpha_\beta Q(\mu, f), \partial^\alpha_\beta f w^2(\alpha, \beta))_{L^2_{x, v}}= \sum_{\alpha_1 \le \alpha, \beta_1 \le \beta} ( Q(\partial_{\beta_1} \mu, \partial^{\alpha }_{\beta -\beta_1} f), \partial^{\alpha}_\beta f w^2( \alpha, \beta))_{L^2_{x, v}}.
\]
We split it into two cases $\beta_1 = 0$ and $|\beta_1|>0$. If $\beta_1  =0$ after integrating in $x$ in Lemma \ref{L51}, we have
\begin{equation}\label{l25extra1}
( Q( \mu, \partial^{\alpha }_\beta f), \partial^{\alpha}_\beta f w^2(\alpha, \beta))_{L^2_{x, v}}  \le  -\gamma_1 \Vert \partial^\alpha_\beta f w(\alpha, \beta)\Vert_{L^2_x H^1_{*}}^2 + C_{k}  \Vert \partial^\alpha_\beta f  \Vert_{L^2_x L^2_v }^2.
\end{equation}
For the case $|\beta_1| >0$, from the definition of $w$ we have $w( \alpha, \beta) \langle v \rangle \le w( \alpha, \beta - \beta_1)$. By Lemma  \ref{L51} we have
\begin{equation*}
\begin{aligned} 
|( Q(\partial_{\beta_1} \mu, \partial^\alpha_{\beta - \beta_1} f), \partial^{\alpha}_{\beta} f w^2 (\alpha, \beta))_{L^2_{x, v}} |\le& C_k \Vert \partial^\alpha_{\beta - \beta_1}  f w(\alpha, \beta) \Vert_{L^2_x H^1_{*, 1}}\Vert \partial^\alpha_\beta g w(\alpha, \beta) \Vert_{L^2_x H^1_{*}} 
\\
\le &  C_k \Vert \partial^\alpha_{\beta - \beta_1}  f w(\alpha, \beta_1 ) \Vert_{L^2_x H^1_{*}}  \Vert \partial^\alpha_\beta g w(\alpha, \beta) \Vert_{L^2_x H^1_{*}},
\end{aligned}
\end{equation*}
so (1) is thus finished by gathering the two terms together. Next, notice that
\[
( \partial^\alpha_{\beta } Q( g,  f), \partial^{\alpha}_{\beta} f w^2 ( \alpha, \beta))_{L^2_{x, v}}   =\sum_{\alpha_1 \le \alpha, \beta_1 \le \beta} ( Q(  \partial^{\alpha_1}_{\beta_1 }  g,  \partial^{\alpha -\alpha_1}_{\beta  -\beta_1}  f), \partial^{\alpha}_{\beta} f w^2 (\alpha, \beta))_{L^2_{x, v}}.
\]
We again split it into two cases $|\alpha_1| = |\beta_1| =0 $ and $|\alpha_1| +|\beta_1| >0$. For the case $|\alpha_1| = |\beta_1| =0 $, by Lemma \ref{L51} we have
\begin{equation*}
\begin{aligned}
|( Q( g, \partial^\alpha_{\beta } f), \partial^{\alpha}_{\beta} f w^2 (\alpha, \beta))_{L^2_{x, v}} |  
\le &  \int_{\T^3} C_k\Vert g \Vert_{L^2_7 } \Vert  \partial^\alpha_{\beta } f w(\alpha, \beta) \Vert_{H^1_{*}}^2 dx \\
\le  &C_k \Vert g \Vert_{H^2_x L^2_7 } \Vert  \partial^\alpha_{\beta } f w( \alpha, \beta) \Vert_{L^2_x H^1_{*}}^2 
\le C_k \Vert g \langle v \rangle^7 \Vert_{H^2_{x, v}} D^f_{2,k} ,
\end{aligned}
\end{equation*}
and  for all  $\alpha_1 \le \alpha, \beta_1 \le \beta, |\alpha_1| +|\beta_1| >0$, by Lemma \ref{L51} we have
\begin{equation*}
\begin{aligned} 
|( Q(\partial^{\alpha_1}_{\beta_1} g, \partial^{\alpha -\alpha_1}_{\beta - \beta_1} f), \partial^{\alpha}_{\beta} f w^2 (\alpha, \beta))_{L^2_{x, v}} |
\le& \int_{\T^3}  C_k\Vert \partial^{\alpha_1}_{\beta_1} g \Vert_{L^2_{7}} \Vert \partial^{\alpha -\alpha_1}_{\beta - \beta_1} f w( \alpha, \beta)\Vert_{H^1_{ *, 1 }}\Vert \partial^{\alpha}_{\beta}  f w(\alpha, \beta) \Vert_{H^1_{ * }} dx.
\end{aligned}
\end{equation*}
We split it into two cases,  $|\alpha_1| + |\beta_1| =1 $ or 2. For $|\alpha_1| + |\beta_1| =1 $,
by $\Vert fg\Vert_{L^2_x} \le \Vert  f \Vert_{L^6_x} \Vert g \Vert_{L^3_x}$, we have
\begin{equation*}
\int_{\T^3} \Vert \partial^{\alpha_1}_{\beta_1} g \Vert_{L^2_{7}} \Vert \partial^{\alpha - \alpha_1}_{\beta - \beta_1} f w(\alpha, \beta)\Vert_{H^1_{  *, 1}}\Vert \partial^{\alpha}_{\beta}  f w( \alpha, \beta) \Vert_{H^1_{ * }} dx
\lesssim \Vert \langle v \rangle^{7} g \Vert_{H^2_{x, v} } \Vert \partial^{\alpha - \alpha_1}_{\beta - \beta_1} f w( \alpha, \beta)\Vert_{L^3_x H^1_{ *, 1 }}  \sqrt{D^f_{2,k}}.
\end{equation*}
 We again split it into two parts  $|\alpha|+|\beta| = 1$ or 2. For the case  $|\alpha_1| + |\beta_1| = |\alpha|+|\beta| =1$ we have $|\alpha - \alpha_1| =|\beta - \beta_1| =0 $. By \eqref{L3}, $\Vert f w( \alpha, \beta)\Vert_{L^3_x H^1_{*, 1}}  \lesssim \Vert f w(0, 0)\Vert_{L^2_x H^1_{ * }}  + \Vert f w(1,0)\Vert_{H^1_x H^1_{  * }} \lesssim \sqrt{D^f_{2,k}}$.
For the case $|\alpha| + |\beta| =2$,  either $|\alpha - \alpha_1| =1, |\beta - \beta_1|=0$ or $|\alpha - \alpha_1| =0, |\beta - \beta_1| =1$. \\
For the first case, 
we have $\Vert \partial^{\alpha - \alpha_1} f w( \alpha, \beta)\Vert_{L^3_x H^1_{  *, 1 }}  \le  \Vert f w( 1, 0)\Vert_{H^1_x H^1_{ * }} + \Vert f w( 2 , 0)\Vert_{H^2_x H^1_{* }}  \lesssim \sqrt{D^f_{2,k}}$.\\
For the second case, $|\beta| \ge 1$. So
$
\Vert  \partial_{\beta - \beta_1}  f w(\alpha, \beta)\Vert_{L^3_x H^1_{ *, 1}} \le \Vert \nabla f w(0, 1)\Vert_{L^2_x H^{1}_{ *}} + \Vert \nabla f  w( 1 , 1)\Vert_{H^1_x H^{1}_{ * }} \lesssim \sqrt{D^f_{2,k}}
$ 
For the case $|\alpha_1|+|\beta_1| = 2$, we have $|\alpha| +|\beta|=2$, $|\alpha - \alpha_1| = |\beta - \beta_1|=0$. So
\begin{equation*}
\int_{\T^3} \Vert \partial^{\alpha_1}_{\beta_1} g \Vert_{L^2_{7}} \Vert \partial^{\alpha - \alpha_1}_{\beta - \beta_1} f w(\alpha, \beta)\Vert_{H^1_{ *, 1 }}\Vert \partial^{\alpha}_{\beta}  f w(\alpha, \beta) \Vert_{H^1_{ * }} dx
\lesssim \Vert \langle v \rangle^{7} g \Vert_{H^2_{x, v} } \Vert f w(\alpha, \beta)\Vert_{L^\infty_x H^1_{ *, 1 }}    \sqrt{D^f_{2,k}}, 
\end{equation*}
which is similar as \eqref{l25extra1}. We obtain again from \eqref{Linfty} that $\Vert  f w(\alpha, \beta)\Vert_{L^\infty_x H^1_{ *, 1}} \le \Vert f w(1, 0)\Vert_{H^1_x H^1_{ *}} + \Vert f w(2 , 0)\Vert_{H^2_x H^1_{ * }}\lesssim \sqrt{D^f_{2,k}}$ 
The proof is done after gathering the terms.
\end{proof}

\begin{lem}
\label{lembols}
(Lemma 2.10 of \cite{CDL}) Let $-3<\gamma\le 1, 0<s<1, \gamma+2s\ge -1$. For any functions $f,g$ and $k\ge 14$, we have
\begin{equation}
\label{equationbols1}
{\|}Q(f,g){\|}_{H^{-s}_{k-\gamma/2}}\lesssim {\|}f{\|}_{L^2_{14}}{\|}g{\|}_{H^s_{k+\gamma/2+2s}}+{\|}g{\|}_{L^2_{14}}{\|}f{\|}_{H^s_{k+\gamma/2}}
\end{equation}
moreover,  we have the following estimate of $\sum_{\pm}L_\pm$:
\begin{align}
    \label{esL}
    \sum_\pm(L_\pm f,\<v\>^{2k}f)_{L^2_v}\ge -\ga_1\|f\|_{H^s_{k+\ga/2}}^2+C_k\|\1_{\{|v|\le R\}}f\|_{L^2_{v}}^2. 
\end{align}

\end{lem}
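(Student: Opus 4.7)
The first inequality will require essentially no new work: it is precisely the duality bound already recorded at the end of Lemma \ref{L26}, with matching hypotheses on $\gamma$, $s$ and $k$, so I would simply cite it.

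For the coercivity estimate \eqref{esL} my plan is to split
\[
L_\pm f = 2Q(\mu,f_\pm) + Q(f_+ + f_-,\mu),
\]
pair against $\langle v\rangle^{2k}f_\pm$, and sum over $\pm$. I would handle the diagonal piece via Theorem \ref{T25} specialized to $G=\mu$ (i.e.\ $g=0$), which yields a gain $-\tfrac{\gamma_2}{6}A_k\|f_\pm\|_{L^2_{k+\gamma/2}}^2 - 2\gamma_1\|f_\pm\|_{H^s_{k+\gamma/2}}^2$ modulo a harmless $C_k\|f_\pm\|_{L^2}^2$, with $A_k := \|b(\cos\theta)(1-\cos^{2k-3-\gamma}(\theta/2))\|_{L^1_\theta}$. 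The off-diagonal piece I would treat with Lemma \ref{L23}, producing a loss $2\gamma_3 B_k\|f_++f_-\|_{L^2_{k+\gamma/2}}\|f_\pm\|_{L^2_{k+\gamma/2}}$ plus a harmless $L^2\times L^2$ remainder, where $B_k := \|b(\cos\theta)\sin^{k-(3+\gamma)/2}(\theta/2)\|_{L^1_\theta}$. After summing over $\pm$, applying Cauchy--Schwarz, and using $\|f_++f_-\|^2 \le 2(\|f_+\|^2 + \|f_-\|^2)$, the effective coefficient in front of $\sum_\pm\|f_\pm\|_{L^2_{k+\gamma/2}}^2$ becomes $4\gamma_3 B_k - \tfrac{\gamma_2}{6}A_k$.

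The main obstacle is making this coefficient negative, which I would achieve by taking $k$ large. Near $\theta = 0$ one has $b(\cos\theta)\sin\theta \sim \theta^{-1-2s}$, so $A_k$ stays bounded below uniformly in $k$ (since $1 - \cos^{2k-3-\gamma}(\theta/2) \to 1$ pointwise), while the extra factor $\sin^{k-(3+\gamma)/2}(\theta/2)$ inside $B_k$ produces an additional $\theta^{k-(3+\gamma)/2}$ near zero and forces $B_k \to 0$ as $k\to\infty$; this is the standard ``large-weight gain'' mechanism exploiting the non-cutoff angular singularity. Once the weighted cross term is absorbed I am left with $-\gamma_1\sum_\pm\|f_\pm\|_{H^s_{k+\gamma/2}}^2 + C_k\sum_\pm\|f_\pm\|_{L^2}^2$, and the last step is to convert the $L^2$ remainder into the truncated ball norm: split $\|f_\pm\|_{L^2}^2 = \|\mathbf 1_{|v|\le R} f_\pm\|_{L^2}^2 + \|\mathbf 1_{|v|>R}f_\pm\|_{L^2}^2$, dominate the exterior by $R^{-2(k+\gamma/2)}\|f_\pm\|_{L^2_{k+\gamma/2}}^2$ (valid since $k+\gamma/2 > 0$ for $k\ge 14$, $\gamma>-3$), and pick $R$ large enough so that $C_k R^{-2(k+\gamma/2)}$ is absorbed into the $H^s_{k+\gamma/2}$ dissipation, yielding the claimed estimate.
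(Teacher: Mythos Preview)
The paper does not give its own proof of this lemma: it is stated as Lemma 2.10 of \cite{CDL} and simply cited. Your proposal is correct and follows precisely the expected route. The first inequality is indeed verbatim the duality statement at the end of Lemma \ref{L26}, and your coercivity argument---pairing the $2Q(\mu,f_\pm)$ piece via Theorem \ref{T25} (with $g=0$) against the $Q(f_++f_-,\mu)$ piece via Lemma \ref{L23}, then exploiting $B_k\to 0$ while $A_k$ stays bounded below to absorb the cross term for large $k$, and finally splitting the $L^2$ remainder at radius $R$---is exactly the standard mechanism behind this type of estimate (cf.\ \cite{CHJ}, \cite{GMM}). One small remark: your argument establishes the estimate for $k$ sufficiently large rather than for the specific threshold $k\ge 14$; checking that $14$ suffices requires tracking the explicit constants $\gamma_2,\gamma_3$, but this is a quantitative detail and not a gap in the approach.
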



\subsection{Electromanetic effect}
The next result is the estimate of the linear part of the Vlasov-Maxwell term. 
\begin{lem}\label{L211}
Let $f, E$ be any sufficiently smooth solution to equation \eqref{VMB3} and \eqref{VMB4}. There exists $C_k>0$, such that\\
\smallskip 
(1) (mixed derivatives) for any $|\alpha| + |\beta| \le K$, 
\begin{equation}
\label{L211eq1}
 |( \partial^\alpha_{\beta} (E\cdot v \mu), \partial^\alpha_{\beta}f_\pm w^2(\alpha, \beta) )_{L^2_{x, v}}| \le C_k \Vert E \Vert_{H^{K}_x }   \Vert f_\pm \Vert_{H^{K}_x L^2_{v}  }.
\end{equation}
(2) (spatial derivatives) for any $|\alpha| \le K $, we have
\begin{equation}
\label{L211eq2}
\sum_\pm ( \partial^\alpha (E\cdot v \mu), \partial^\alpha f_\pm w^2(\alpha, \beta) )_{L^2_{x, v}} - C_k \partial_t (\Vert \partial^\alpha E \Vert^2 +\Vert \partial^\alpha B \Vert^2)\le C_k \Vert E \Vert_{H^{K}_x }   \Vert f_\pm\Vert_{H^{K}_x L^2_{v}  }.
\end{equation}
\end{lem}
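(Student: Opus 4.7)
The key observation for both parts is that the inhomogeneity factors as a tensor product: since $E=E(x)$ and $v\mu=v\mu(v)$ depend on disjoint variables, we have $\partial^\alpha_\beta(E\cdot v\mu)=\partial^\alpha E\cdot\partial_\beta(v\mu)$. Moreover, the Gaussian $\mu(v)$ dominates the polynomial growth of the weight $w(\alpha,\beta)$, so $\partial_\beta(v\mu)\,w^2(\alpha,\beta)$ together with all its $v$-derivatives remains Schwartz in $v$, with $L^2_v$-norm bounded by some constant $C_k$ depending only on $k$ and the orders $|\alpha|,|\beta|$.

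For part (1), the only obstacle is that the right-hand side controls $f_\pm$ in $L^2_v$ (without $v$-derivatives), while the inner product contains $\partial^\alpha_\beta f_\pm$. I would integrate by parts $|\beta|$ times in the velocity variable to transfer all $v$-derivatives from $f_\pm$ onto the velocity factor, yielding
\[
(\partial^\alpha_\beta(E\cdot v\mu),\partial^\alpha_\beta f_\pm w^2(\alpha,\beta))_{L^2_{x,v}} = (-1)^{|\beta|}\int_{\mathbb{T}^3}\partial^\alpha E(x)\cdot\int_{\mathbb R^3}\partial_\beta\!\big(\partial_\beta(v\mu)\,w^2(\alpha,\beta)\big)\,\partial^\alpha f_\pm\,dv\,dx.
\]
The inner factor is Schwartz in $v$, so Cauchy-Schwarz in $v$ produces a bound $C_k\|\partial^\alpha f_\pm(x,\cdot)\|_{L^2_v}$, and a second Cauchy-Schwarz in $x$ gives the claimed inequality.

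For part (2), taking $\beta=0$ removes the need for integration by parts, and the direct Cauchy-Schwarz argument of part (1) already produces $|(\partial^\alpha(E\cdot v\mu),\partial^\alpha f_\pm w^2(\alpha,0))_{L^2_{x,v}}|\le C_k\|E\|_{H^K_x}\|f_\pm\|_{H^K_x L^2_v}$. The additional term $\partial_t(\|\partial^\alpha E\|^2+\|\partial^\alpha B\|^2)$ in the statement reflects a refinement extracted from the Maxwell system: differentiating the electromagnetic energy and using $\partial_t E-\nabla_x\times B=-\int v(f_+-f_-)\,dv$ together with $\partial_t B+\nabla_x\times E=0$, the two curl contributions integrate by parts against one another and cancel, yielding the identity
\[
\tfrac12\partial_t\big(\|\partial^\alpha E\|^2+\|\partial^\alpha B\|^2\big)=-\int_{\mathbb T^3}\partial^\alpha E\cdot\int_{\mathbb R^3}v\,\partial^\alpha(f_+-f_-)\,dv\,dx.
\]
This identity lets the leading part of $\sum_\pm$-term be recast as a perfect time-derivative, while the polynomial-weight correction is a Schwartz remainder controlled by Cauchy-Schwarz.

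The main obstacle is the mismatch between the polynomial weight $w^2(\alpha,0)$ and the bare $v$ factor required by the Maxwell identity: the naive split $v\mu w^2=v+(v\mu w^2-v)$ is inadmissible because $v(\mu w^2-1)$ grows linearly in $|v|$ and cannot be controlled in unweighted $L^2_v$. The cleanest way around this is to avoid the split altogether and use the direct Cauchy-Schwarz bound from part (1) for the pointwise inequality, invoking the Maxwell identity only to recast the estimate in the form of a $t$-derivative of the electromagnetic energy, which is the form that can be absorbed into the combined energy functional $E_{K,k}(t)$ in the later energy estimates.
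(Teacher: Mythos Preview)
Your treatment of part (1) matches the paper's proof exactly: factor the tensor product, integrate by parts $|\beta|$ times in $v$ to remove velocity derivatives from $f_\pm$, then apply Cauchy--Schwarz in $v$ and in $x$.

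For part (2), however, your argument has a genuine gap. You correctly derive the Maxwell identity, and you correctly diagnose that the naive split $v\mu w^2 = v + (v\mu w^2 - v)$ fails because the remainder grows linearly. But your proposed resolution in the final paragraph is not a proof: the direct Cauchy--Schwarz bound from part (1) gives $|\sum_\pm(\cdots)|\le C_k\|E\|_{H^K_x}\|f_\pm\|_{H^K_xL^2_v}$, yet this by itself says nothing about the \emph{difference} $\sum_\pm(\cdots) - C_k\partial_t(\|\partial^\alpha E\|^2+\|\partial^\alpha B\|^2)$, since the time derivative can have either sign and is not itself controlled by the right-hand side (as you yourself observe, $\int v f\,dv$ is not bounded by $\|f\|_{L^2_v}$).

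The idea you are missing is to split $f_\pm$, not the weight: decompose $f_\pm = \bm P f_\pm + (\bm I-\bm P)f_\pm$ via the macroscopic projection \eqref{projection}. Since $\bm P f_\pm = (a_\pm + v\cdot b + (|v|^2-3)c)\mu$ and the weight $w$ is radial, parity in $v$ annihilates the $a_\pm$ and $c$ contributions in $(\partial^\alpha E\cdot v\mu,\,\partial^\alpha\bm P f_\pm\,w^2)_{L^2_{x,v}}$, leaving only the $v\cdot b$ piece. The $v$-integral then factors out as the scalar constant $C_k=\tfrac13\int_{\R^3}|v|^2\mu^2 w^2\,dv$, and what remains is exactly $\int_{\T^3}\partial^\alpha E\cdot\partial^\alpha b\,dx$, i.e.\ the velocity moment that appears in the Maxwell identity. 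Thus the $\bm P$-part of the left side is \emph{identically equal} to $C_k\,\partial_t(\|\partial^\alpha E\|^2+\|\partial^\alpha B\|^2)$, and after subtraction only the microscopic piece $(\partial^\alpha E\cdot v\mu,\,\partial^\alpha(\bm I-\bm P)f_\pm\,w^2)_{L^2_{x,v}}$ survives; this remainder is now bounded by the Cauchy--Schwarz argument of part (1), since $v\mu w^2$ is Schwartz and $(\bm I-\bm P)$ is bounded on $L^2_{x,v}$.
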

\begin{proof}
 From integration by parts, we have
\begin{align*}
 ( \partial^\alpha_{\beta} ( E  \cdot v \mu), \partial^\alpha_{\beta}f w^2(\alpha, \beta) )_{L^2_{x, v}}  &= ( \partial^\alpha E  \cdot \partial_\beta( v \mu) w^2(\alpha, \beta) , \partial^\alpha_{\beta}f )_{L^2_{x, v}}
\\&= ((-1)^{|\beta|}  \partial^\alpha E \cdot  \partial_\beta (\partial_\beta( v \mu) w^2(\alpha, \beta) ), \partial^\alpha f )_{L^2_{x, v}}
\end{align*}
by Fourier transform? and Cauchy-Schwartz inequality, it is easy to show that
\begin{equation*}
((-1)^{|\beta|} \partial^\alpha E \cdot  \partial_\beta (\partial_\beta( v \mu) w^2(\alpha, \beta) ), \partial^\alpha f_\pm )_{L^2_{x, v}} 
\le \Vert\partial^\alpha E \Vert_{L^{2}_x}  \left \Vert \left(\int_{\R^d}   \partial^\alpha f_\pm  \partial_\beta (\partial_\beta( v \mu) w^2(\alpha, \beta) ) dv \right) \right \Vert_{L^{2}_x}.
\end{equation*}
Since $\int_{\R^d}   \partial^\alpha f  \partial_\beta (\partial_\beta( v \mu) w^2(\alpha, \beta) ) dv \le C_k \Vert \partial^\alpha f \Vert_{L^2_v}$,
we have $\left \Vert \left(\int_{\R^d}   \partial^\alpha f  \partial_\beta (\partial_\beta( v \mu) w^2(\alpha, \beta) ) dv \right) \right \Vert_{L^{2}_x} \lesssim \Vert f \Vert_{H^{K}_x L^2_v}$ for any $|\alpha| \le K$, and the proof of \eqref{L211eq1} is finished. To prove \eqref{L211eq2}, we have
\[
 - ( \partial^\alpha (E\cdot v \mu), \partial^\alpha f w^2(\alpha, \beta) )_{L^2_{x, v}}  = -  ( \partial^\alpha (E\cdot v \mu), \partial^\alpha P f w^2(\alpha, \beta) )_{L^2_{x, v}} - ( \partial^\alpha (E\cdot v \mu), \partial^\alpha (I-P) f w^2(\alpha, \beta) )_{L^2_{x, v}} 
\]
recall the representation of $P$. Notice that after doing the scalar product with $v\cdot\mu$, only the term $b(t,x)v\cdot\mu$ in $P$ will not be 0. So
\[
\mp( \partial^\alpha (E\cdot v \mu), \partial^\alpha \bm P f_\pm w^2(\alpha, \beta) )_{L^2_{x, v}} = - \int_{\T^3}\partial^\alpha E \cdot  \left (\int_{\R^3} v \partial^\alpha (f_+-f_{-}) dv \right) dx \int_{\R^3} \frac {|v|^2} 3 w^2(\alpha, \beta) \mu^{2} dv
\]
and we recall that
\begin{align*}
\frac 1 2 \partial_t (\Vert \partial^\alpha E \Vert^2 +\Vert \partial^\alpha B \Vert^2) &= (\partial^\alpha E, \nabla_x \times \partial^\alpha B) - (\partial^\alpha E,  \int_{\R^3} v \partial^\alpha (f_+-f_{-}) dv  ) -(\partial^\alpha B, \nabla_x \times \partial^\alpha E ) \\
&= - (\partial^\alpha E,  \int_{\R^3} v \partial^\alpha (f_+-f_{-}) dv  )  =  - \int_{\T^3} \partial^\alpha E \cdot  \left (\int_{\R^3} v \partial^\alpha (f_+-f_{-}) dv \right) dx 
\end{align*}
thus there exists $C_k>0$, such that $ -  ( \partial^\alpha (E\cdot v \mu), \partial^\alpha P f w^2(\alpha, \beta) )_{L^2_{x, v}}  = C_k \partial_t (\Vert \partial^\alpha E \Vert^2 +\Vert \partial^\alpha B \Vert^2)$. So \eqref{L211eq2} is proved by noticing that 
$ |( \partial^\alpha (E\cdot v \mu), \partial^\alpha (\bm I-\bm P) f_\pm w^2(\alpha, \beta) )_{L^2_{x, v}} | \lesssim \Vert \partial^\alpha E \Vert_{L^2 } \Vert \partial^\alpha (\bm I-\bm P) f_\pm\Vert_{L^2_{x, v} }$.
\end{proof}
Then we come to the nonlinear term for the Vlasov-Maxwell term. 
\begin{lem}\label{L212}
Consider both Landau and Boltzmann case. for any given smooth functions $f, E$ and $|\alpha| + |\beta|   \le K$, we have\\
(1)For $K=2,3,4,5,6,$ we have\[
( \partial^\alpha_{\beta} (( E + v \times B)\cdot \nabla_v f), \partial^\alpha_{\beta}f w^2(\alpha, \beta) )_{L^2_{x, v}} \lesssim   \sqrt{E^{eb}_{2}  }  E^f_{K, k }   + \sqrt{E^{eb}_{6} }D_{K, k}
\]
(2)For $K=7,$ we have
\[
( \partial^\alpha_{\beta} (( E + v \times B)\cdot \nabla_v f), \partial^\alpha_{\beta}f w^2(\alpha, \beta) )_{L^2_{x, v}} \lesssim   \sqrt{E_{6, k} }  (\sqrt{E^{eb}_{7} } \sqrt{D_{7, k} }  +  D_{7, k}  )  +  \sqrt{E^{eb}_{2} } E^f_{7, k }  
\] 
\end{lem}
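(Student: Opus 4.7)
The plan is to expand $\partial^\alpha_\beta((E+v\times B)\cdot\nabla_v f)$ by Leibniz, isolate the principal term in which every derivative lands on $\nabla_v f$, handle it by an integration by parts in $v$, and dispose of the remaining pieces by Hölder and Sobolev embedding in $x$, choosing the $L^\infty_x$-factor according to derivative count. Since $v\times B$ is linear in $v$, only $\beta_1=0$ or $|\beta_1|=1$ contribute to the Leibniz expansion, which produces a finite sum of terms of the form
\[
I_{\alpha_1,\beta_1} = \big(\partial^{\alpha_1}E\cdot\partial^{\alpha-\alpha_1}_{\beta+e_i}f + \partial_{\beta_1}(v\times\partial^{\alpha_1}B)\cdot\partial^{\alpha-\alpha_1}_{\beta-\beta_1+e_i}f,\; \partial^\alpha_\beta f\,w^2(\alpha,\beta)\big)_{L^2_{x,v}}.
\]

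For the principal term $I_{0,0}$, integration by parts in $v$ together with $\nabla_v\cdot E=0$, $\nabla_v\cdot(v\times B)=0$, and the orthogonality $(v\times B)\cdot v=0$ — which annihilates the magnetic contribution because $w^2(\alpha,\beta)$ depends on $v$ only through $\langle v\rangle$ — yields
\[
I_{0,0}=-\tfrac{1}{2}\int E\cdot\nabla_v w^2(\alpha,\beta)\,|\partial^\alpha_\beta f|^2\,dv\,dx.
\]
Since $|\nabla_v w^2|\lesssim w^2$, the Sobolev embedding $H^2_x\hookrightarrow L^\infty_x$ produces the contribution $\sqrt{E^{eb}_2}\,E^f_{K,k}$ common to both parts. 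For the remaining terms ($|\alpha_1|+|\beta_1|\ge 1$), I Hölder in $x$ and decide which factor goes into $L^\infty_x$ by derivative count: when $|\alpha_1|\le K-2$ I place $\partial^{\alpha_1}(E,B)\in L^\infty_x$ by Sobolev, otherwise the $f$-factor goes into $L^\infty_x L^2_v$ via \eqref{Linfty}. The $|v|$ growth from $v\times\partial^{\alpha_1}B$ is absorbed by the weight structure: the inequality \eqref{equationw5} (together with \eqref{equationw1}--\eqref{equationw2} from Lemma \ref{L27}) is designed so that $|v|w(\alpha,\beta)$ converts into a geometric mean of two adjacent weights times the dissipation factor $\langle v\rangle^{\gamma/2}$, which places the remaining $f$-factor naturally in $D^f_{K,k}$. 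For $K\le 6$, the derivative budget is comfortable enough that these contributions collapse to $\sqrt{E^{eb}_6}\,D_{K,k}$, completing part~(1).

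The main obstacle is the top-order case $K=7$, where several Sobolev embeddings become tight and a straightforward two-factor Hölder fails. When $|\alpha_1|\in\{6,7\}$ there is no room left to place either $\partial^{\alpha_1}(E,B)$ or the $f$-factor in $L^\infty_x$ through an $H^2_x$-embedding, so I invoke the three-factor Hölder split $L^2_x\cdot L^3_x\cdot L^6_x$ combined with \eqref{L3} and $L^6_x\hookrightarrow H^1_x$, paying one $x$-derivative on each factor. The top-order derivative on $E$ is then estimated by $\sqrt{E^{eb}_7}$, the medium-order $f$-factor by $\sqrt{E_{6,k}}$, and the remaining factor retains its $\partial_v$ inside $\sqrt{D_{7,k}}$; the intermediate splits $|\alpha_1|\in\{3,4,5\}$ contribute $\sqrt{E_{6,k}}\,D_{7,k}$ after the same weight manipulation as above. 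Summing over all admissible $(\alpha_1,\beta_1)$ yields exactly the mixed bound $\sqrt{E_{6,k}}\big(\sqrt{E^{eb}_7}\sqrt{D_{7,k}}+D_{7,k}\big)+\sqrt{E^{eb}_2}E^f_{7,k}$ of part~(2).
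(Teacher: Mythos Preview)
Your overall architecture is the same as the paper's: Leibniz expansion, integration by parts for the principal term (using $(v\times B)\cdot v=0$ and radial $w$), and H\"older--Sobolev in $x$ for the remaining pieces with the weight shift \eqref{equationw5}. For $K\le 6$ this works; the paper does exactly this, with the case distinction $|\alpha_1|\le 4$, $|\alpha_1|=5$, $|\alpha_1|=6$ (not your threshold $|\alpha_1|\le K-2$, but the outcome is the same since $\sqrt{E^{eb}_6}$ absorbs everything).

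There is, however, a genuine gap in your $K=7$ analysis. Your claim that for $|\alpha_1|\in\{6,7\}$ ``there is no room left to place either factor in $L^\infty_x$'' is wrong, and your proposed $L^2_x\cdot L^3_x\cdot L^6_x$ split does not close. Take the extreme case $|\alpha_1|=7$: then $\alpha=\alpha_1$, $\beta=0$, and the three factors are $\partial^{\alpha}[E,B]$ (seven $x$-derivatives), $\langle v\rangle\nabla_v f\,w\langle v\rangle^{-\gamma/2}$ (one $v$-derivative), and $\partial^\alpha f\,w\langle v\rangle^{\gamma/2}$ (seven $x$-derivatives). Both the first and the third factors already saturate the budget: putting either in $L^3_x$ or $L^6_x$ via Sobolev costs at least one more $x$-derivative, i.e.\ $H^8_x$-control of $[E,B]$ or $f$, which you do not have. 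So the $L^2\cdot L^3\cdot L^6$ split is impossible here. The only viable choice is the $L^2_x\cdot L^\infty_x\cdot L^2_x$ split: $\partial^\alpha[E,B]\in L^2_x$ gives $\sqrt{E^{eb}_7}$, $\partial^\alpha f\,w\langle v\rangle^{\gamma/2}\in L^2_x$ gives $\sqrt{D_{7,k}}$, and the low-order factor $\langle v\rangle^{1-\gamma/2}\nabla_v f\,w(7,0)$ goes into $L^\infty_x\hookleftarrow H^2_x$, which by the weight relations (since $\langle v\rangle^{1-\gamma/2}w(7,0)\le w(2,1)$, cf.\ \eqref{equationw1}--\eqref{equationw2}) is controlled by $\sqrt{E_{6,k}}$. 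This is precisely what the paper does, and it yields the product $\sqrt{E_{6,k}}\sqrt{E^{eb}_7}\sqrt{D_{7,k}}$. The same $L^2\cdot L^\infty\cdot L^2$ split already handles $|\alpha_1|=6$, giving $\sqrt{E^{eb}_6}D_{7,k}\le\sqrt{E_{6,k}}D_{7,k}$; there is no obstruction there either.
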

\begin{proof}
We have
\begin{multline*}( \partial^\alpha_{\beta} (( E + v \times B) \cdot \nabla_v f), \partial^\alpha_{\beta}f w^2(\alpha, \beta) )_{L^2_{x, v}}  = \underbrace{\sum_{\alpha_1 \le \alpha}   ( \partial^{\alpha_1} ( E + v \times B)  \cdot \nabla_v \partial_{\beta}^{\alpha-\alpha_1}f, \partial^\alpha_{\beta}f w^2(\alpha, \beta) )_{L^2_{x, v}}}_{\text{Term} 1}
\\
+\underbrace{ \sum_{\alpha_1 \le \alpha, \beta_1 \le \beta, |\beta_1|=1}   (  ( \partial_{\beta_1} v \times \partial^{\alpha_1}B)  \cdot \nabla_v \partial_{\beta-\beta_1}^{\alpha-\alpha_1}f, \partial^\alpha_{\beta}f w^2(\alpha, \beta) )_{L^2_{x, v}}}_{\text{Term} 2}.
\end{multline*}
We only give the estimate of Term 1 as Term 2 is more simple. First if $|\alpha_1| = 0$, then $\alpha-\alpha_1 = \alpha$. Notice that $\nabla_v \times v =0$, by integration by parts we have
\[
( (E + v \times B) \cdot \nabla_v \partial_{\beta}^{\alpha}f, \partial^\alpha_{\beta}f w^2(\alpha, \beta) )_{L^2_{x, v}}   =\frac 1 2 (   (E+ v \times B)\cdot \nabla_v (w^2(\alpha, \beta)) , (\partial^\alpha_{\beta}f)^2 )_{L^2_{x, v}} ,
\]
recall the form of $w(\alpha, \beta)$, there exists some $C_k>0$, such that $\nabla_v (w^2(\alpha, \beta))  = C_k  v {\langle v \rangle^{-2}} w^2(\alpha, \beta)$. Since $(v \times B )\cdot v =0$, we have
\begin{equation*}
\begin{aligned}
\frac 1 2 ( (E + v \times B)  \cdot \nabla_v (w^2(\alpha, \beta)) , (\partial^\alpha_{\beta}f)^2 )_{L^2_{x, v}}   =& \frac {C_k} 2 ( (   E + v \times B)  \cdot  \frac v {\langle v \rangle^2} w(\alpha, \beta) \partial^\alpha_{\beta}f , \partial^\alpha_{\beta}f w(\alpha, \beta) )_{L^2_{x, v}}  
\\
\lesssim&  \Vert E  \Vert_{H^2_x }\Vert \partial^\alpha_{\beta}f  w(\alpha, \beta) \Vert_{L^2_x L^2_v}^2\lesssim \sqrt{E^{eb}_{2}  }  E^f_{K, k }
\end{aligned}
\end{equation*}
For the case $|\alpha_1| >0$, we first consider  $K=2,3, 4, 5, 6$, suppose $|\alpha_1| \le  4$, we have
\begin{equation*}
\begin{aligned}
&( \partial^{\alpha_1} (E + v \times B)  \cdot \nabla_v \partial_{\beta}^{\alpha-\alpha_1}f, \partial^\alpha_{\beta}f w^2(\alpha, \beta) )_{L^2_{x, v}}\\& 
\lesssim \Vert \partial^{\alpha_1}[E, B]\Vert_{H^2_x} \Vert \langle v \rangle  \langle v \rangle^{-\gamma/2} \nabla_v \partial_{\beta}^{\alpha-\alpha_1}f w(\alpha, \beta )\Vert_{L^2_xL^2_v} \Vert \partial^\alpha_{\beta}  f   \langle v \rangle^{\gamma/2} w (\alpha, \beta)\Vert_{L^2_xL^2_v}
\\
&\lesssim  \Vert [E, B] \Vert_{H^6_x  }   \Vert \langle v \rangle  \langle v \rangle^{-\gamma/2} \nabla_v \partial_{\beta}^{\alpha-\alpha_1} f w(\alpha, \beta )\Vert_{L^2_xL^2_v} \sqrt{D_{K, k}} \lesssim   \sqrt{E^{eb}_{6} } D_{K, k } 
\end{aligned}
\end{equation*}
here we used \eqref{equationw5}. For the case $|\alpha_1|=5$, we have 
\begin{equation*}
\begin{aligned}
&( \partial^{\alpha_1} (E + v \times B)  \cdot \nabla_v \partial_{\beta}^{\alpha-\alpha_1}f, \partial^\alpha_{\beta}f w^2(\alpha, \beta) )_{L^2_{x, v}} 
\\
\lesssim &\Vert \partial^{\alpha_1}[E, B]\Vert_{H^1_x} \Vert \langle v \rangle  \langle v \rangle^{-\gamma/2} \nabla_v \partial_{\beta}^{\alpha-\alpha_1}f w(\alpha, \beta )\Vert_{H^1_xL^2_v} \Vert \partial^\alpha_{\beta}  f   \langle v \rangle^{\gamma/2} w (\alpha, \beta)\Vert_{L^2_xL^2_v}
\\
\lesssim & \Vert [E, B] \Vert_{H^6_x  }   \Vert \langle v \rangle  \langle v \rangle^{-\gamma/2} \nabla_v \partial_{\beta}^{\alpha-\alpha_1} f w(\alpha, \beta )\Vert_{H^1_xL^2_v} \sqrt{D_{K, k}} 
\lesssim  \sqrt{E^{eb}_{6} } D_{K, k } 
\end{aligned}
\end{equation*}
similarly for the case $|\alpha_1|=6$ we have
\begin{equation*}
\begin{aligned}
&( \partial^{\alpha_1} (E + v \times B)  \cdot \nabla_v \partial_{\beta}^{\alpha-\alpha_1}f, \partial^\alpha_{\beta}f w^2(\alpha, \beta) )_{L^2_{x, v}} 
\\
\lesssim &\Vert \partial^{\alpha_1}[E, B]\Vert_{L^2_x} \Vert \langle v \rangle  \langle v \rangle^{-\gamma/2} \nabla_v \partial_{\beta}^{\alpha-\alpha_1}f w(\alpha, \beta )\Vert_{H^2_xL^2_v} \Vert \partial^\alpha_{\beta}  f   \langle v \rangle^{\gamma/2} w (\alpha, \beta)\Vert_{L^2_xL^2_v}
\\
\lesssim & \Vert [E, B] \Vert_{H^6_x  }   \Vert \langle v \rangle  \langle v \rangle^{-\gamma/2} \nabla_v \partial_{\beta}^{\alpha-\alpha_1} f w(\alpha, \beta )\Vert_{H^2_xL^2_v} \sqrt{D_{K, k}} \lesssim  \sqrt{E^{eb}_{6} } D_{K, k } 
\end{aligned}
\end{equation*}
for the case $|\alpha_1| = 7, K=7$, this time we have $\alpha =\alpha_1, |\beta| =0$. We compute
\begin{equation}
\label{equationlem00}
\begin{aligned}
&( \partial^{\alpha_1} (E + v \times B)  \cdot \nabla_v \partial_{\beta}^{\alpha-\alpha_1}f, \partial^\alpha_{\beta}f w^2(\alpha, \beta) )_{L^2_xL^2_v}  
\\
\lesssim &\Vert \partial^{\alpha_1}[E, B]\Vert_{L^2_x} \Vert \langle v \rangle  \langle v \rangle^{-\gamma/2} f w(\alpha, \beta )\Vert_{H^2_xL^2_v} \Vert \partial^\alpha_{\beta}  f   \langle v \rangle^{\gamma/2} w (\alpha, \beta)\Vert_{L^2_xL^2_v}
\\
\lesssim & \Vert [E, B] \Vert_{H^7_x  }   \Vert \langle v \rangle  \langle v \rangle^{-\gamma/2} \nabla_v  f w(\alpha, \beta )\Vert_{H^2_xL^2_v} \sqrt{D_{7, k} }
\\ 
\lesssim  &  \Vert \langle v \rangle  \langle v \rangle^{-\gamma/2} \nabla_v  f w(\alpha, \beta )\Vert_{H^2_xL^2_v}    \sqrt{E^{eb}_{7} } \sqrt{D_{7, k}}
\\ 
\lesssim  & \sqrt{E_{6, k} } \sqrt{E^{eb}_{7}} \sqrt{D_{7, k} }
\end{aligned}
\end{equation}
the last inequality of \eqref{equationlem00} used the fact that
\[
 \langle v \rangle  \langle v \rangle^{-\gamma/2}w(7, 0) \le  w(2, 1)
\]
this is because of \eqref{equationw1} and \eqref{equationw2}
\[
 \langle v \rangle  \langle v \rangle^{-\gamma/2}w(7, 0)=\langle v\rangle ^{1-\gamma-20s} w(3,0)=\langle v\rangle ^{1-\gamma-20s-\min\{\gamma,0\}} w(2,1)\le w(2,1)\]
 so the proof is finished by gathering all the terms together.
\end{proof}

The next estimate is as follows.
\begin{lem}\label{L210}
Suppose that $-3\le\gamma\le 1, \gamma+2s >-1$ and $k\ge 20$. For any $G= \mu +g \ge 0$ satisfies
\[
\Vert G \Vert_{L^1} \ge \frac 1 2, \quad \Vert G\Vert_{L^1_2} + \Vert G\Vert_{L \log L} \le 4,
\]
there exist constants $\gamma_1, \gamma_2, C_k \ge 0$. such that for any $|\alpha| + |\beta| \le K$, if $K=2, 3, 4, 5,6$ we have
\begin{equation*}
\begin{aligned} 
&(\partial^\alpha_\beta Q ( G, f),  \partial^\alpha_\beta f w^2(\alpha, \beta) )_{L^2_{x, v}} 
\\
\le & - \frac {1} {12} \Vert  b(\cos \theta) \sin^2 \frac \theta 2  \Vert_{L^1_\theta}\Vert \partial^\alpha_\beta  f w(\alpha, \beta)    \Vert_{L^2_x L^2_{\gamma/2, * }}^2  +\gamma_1 \Vert  \partial^\alpha_\beta  f w(\alpha, \beta) \Vert_{L^2_xH^s_{\gamma/2}}^2
\\
&+C_k\sqrt{E^g_{K, k}}   D^f_{K, k} +C_k\sqrt{E^f_{K, k}}   \sqrt{D^f_{K, k}}  \sqrt{D^g_{K, k}} 
\\
& + C_{k}  \Vert \partial^\alpha_\beta f \Vert_{L^2_{x, v}}^2+ C_k \sum_{\beta_1 < \beta} \Vert \partial^\alpha_{\beta_1}  f w(\alpha, \beta_1) \Vert_{H^s_{\gamma/2}}\Vert \partial^\alpha_\beta f w(\alpha, \beta)\Vert_{H^s_{\gamma/2}} ,
\end{aligned}
\end{equation*}
if $K=7$ we have
\begin{equation*}
\begin{aligned} 
&(\partial^\alpha_\beta Q ( G, f),  \partial^\alpha_\beta f w^2(\alpha, \beta) )_{L^2_{x, v}} 
\\
\le & - \frac {1} {12} \Vert  b(\cos \theta) \sin^2 \frac \theta 2  \Vert_{L^1_\theta}\Vert \partial^\alpha_\beta  f w(\alpha, \beta)    \Vert_{L^2_x L^2_{\gamma/2, * }}^2  +\gamma_1 \Vert  \partial^\alpha_\beta  f w(\alpha, \beta) \Vert_{L^2_xH^s_{\gamma/2}}^2
\\
&+C_k\sqrt{E^g_{6, k}}   D^f_{7, k} +C_k\sqrt{E^f_{6, k}}   \sqrt{D^f_{7, k}}  \sqrt{D^g_{7, k}} 
\\
& + C_{k}  \Vert \partial^\alpha_\beta f \Vert_{L^2_{x, v}}^2+ C_k \sum_{\beta_1 < \beta} \Vert \partial^\alpha_{\beta_1}  f w(\alpha, \beta_1) \Vert_{H^s_{\gamma/2}}\Vert \partial^\alpha_\beta f w(\alpha, \beta)\Vert_{H^s_{\gamma/2}}.
\end{aligned}
\end{equation*}
\end{lem}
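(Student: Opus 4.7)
The plan is to expand $\partial^\alpha_\beta Q(G,f)=\partial^\alpha_\beta Q(\mu+g,f)$ by the Leibniz rule in $v$ (with the $x$-derivatives commuting past $Q$). Since $\mu$ depends on $v$ only, $\partial^{\alpha_1}_x\mu=0$ unless $\alpha_1=0$, so the expansion splits into three clean groups: (i) the principal piece $Q(G,\partial^\alpha_\beta f)$; (ii) the $\mu$-derivative remainders $Q(\partial_{\beta_1}\mu,\partial^\alpha_{\beta-\beta_1}f)$ with $0<|\beta_1|\le|\beta|$; and (iii) the $g$-derivative pieces $Q(\partial^{\alpha_1}_{\beta_1}g,\partial^{\alpha-\alpha_1}_{\beta-\beta_1}f)$ with $|\alpha_1|+|\beta_1|\ge 1$. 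Each one is tested against $\partial^\alpha_\beta f\cdot w^2(\alpha,\beta)$ first pointwise in $x$ and then integrated over $\T^3$.

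For the principal term (i) I apply Theorem~\ref{T25} with $f$ replaced by $\partial^\alpha_\beta f$ and the polynomial exponent $k$ replaced by the exponent of $w(\alpha,\beta)$. The hypotheses on $G$ are exactly those of the lemma. This produces directly the two coercive contributions $-\frac{1}{12}\Vert b(\cos\theta)\sin^{\bullet}\tfrac{\theta}{2}\Vert_{L^1_\theta}\Vert\cdot\Vert_{L^2_{\ga/2}}^2$ and $-\gamma_1\Vert\cdot\Vert_{H^s_{\ga/2}}^2$ together with the remainder $C_k\Vert\partial^\alpha_\beta f\Vert_{L^2}^2$, and two cross-terms of the form $\Vert\partial^\alpha_\beta f w(\alpha,\beta)\Vert_{L^2_{14}}\Vert g\Vert_{H^s_{\bullet}}\Vert\partial^\alpha_\beta f w(\alpha,\beta)\Vert_{H^s_{\ga/2}}$ and $\Vert g\Vert_{L^2_{14}}\Vert\partial^\alpha_\beta f w(\alpha,\beta)\Vert_{H^s_{\ga/2}}^2$. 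After integrating in $x$ and placing the factor carrying only the $L^2_{14}$ weight in $L^\infty_x$ via $H^2_x\hookrightarrow L^\infty_x$, these are absorbed into $\sqrt{E^g_{K,k}}\,D^f_{K,k}$ and $\sqrt{E^f_{K,k}}\sqrt{D^f_{K,k}}\sqrt{D^g_{K,k}}$.

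For (ii) I invoke the second (i.e. $\partial_\beta\mu$) statement of Lemma~\ref{L23}, which yields an $L^2_{k+\ga/2}$–$L^2_{k+\ga/2}$ bound. Using \eqref{equationw2} repeatedly one obtains $\langle v\rangle\, w(\alpha,\beta)\le w(\alpha,\beta-\beta_1)$ for $|\beta_1|\ge 1$, which lets me redistribute the weight $w^2(\alpha,\beta)$ symmetrically onto $\partial^\alpha_{\beta-\beta_1}f$ and recover (after inflating the $L^2_{\ga/2}$-norms into the dissipative $H^s_{\ga/2}$-norms) precisely the lower-order sum $\sum_{\beta_1<\beta}\Vert\partial^\alpha_{\beta_1}f\,w(\alpha,\beta_1)\Vert_{H^s_{\ga/2}}\Vert\partial^\alpha_\beta f\,w(\alpha,\beta)\Vert_{H^s_{\ga/2}}$ of the statement.

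For (iii) I dualize using Lemma~\ref{L26}: the quantity $\Vert Q(\partial^{\alpha_1}_{\beta_1}g,\partial^{\alpha-\alpha_1}_{\beta-\beta_1}f)\Vert_{H^{-s}_{k-\ga/2}}$ paired against $\partial^\alpha_\beta f\,w^2(\alpha,\beta)$ produces two pairings of an $L^2_{14}$ factor with two $H^s$-type factors. The weight identities \eqref{equationw1}–\eqref{equationw2} redistribute $w^2(\alpha,\beta)$ across the three factors so that every derivative of $f$ or $g$ sits inside a $D^\cdot_{K,k}$ norm. For $K\le 6$, at least one of $\partial^{\alpha_1}_{\beta_1}g$ and $\partial^{\alpha-\alpha_1}_{\beta-\beta_1}f$ has at most $K-2$ derivatives; placing that factor in $L^\infty_x$ via $H^2_x\hookrightarrow L^\infty_x$ gives the desired products. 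The main obstacle is the $K=7$ case, where the right-hand side must still be controlled by $E_{6,k}$ rather than $E_{7,k}$. Since $|\alpha_1|+|\beta_1|+|\alpha-\alpha_1|+|\beta-\beta_1|=7$, at least one of the two factors carries at most $3$ derivatives and fits into $E_{6,k}$ via $L^\infty_x$; when the other carries $5$ or $6$ derivatives, one must interpolate through $L^3_x$–$L^6_x$ using \eqref{Linfty} and \eqref{L3}, and when the extra $\langle v\rangle^{2s}$ appearing in Lemma~\ref{L26} forces a $v$-Sobolev mismatch, one applies \eqref{Ls}. The careful bookkeeping—systematically pushing the top-order factor into $\sqrt{D^\cdot_{7,k}}$ while keeping the other inside $\sqrt{E^\cdot_{6,k}}$—is the only real technical point; the remainder of the proof is the Leibniz expansion together with the weight gymnastics of Lemma~\ref{L27}.
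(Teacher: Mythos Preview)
Your overall architecture---Leibniz expansion into the principal piece $Q(G,\partial^\alpha_\beta f)$, the $\partial_{\beta_1}\mu$ remainders, and the $\partial^{\alpha_1}_{\beta_1}g$ pieces, with Theorem~\ref{T25} for the first and Lemma~\ref{L26} for the last---is exactly what the paper does. Two points need correcting.

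First, for group (ii) you cite the wrong lemma. Lemma~\ref{L23} controls $(Q(h,\partial_\beta\mu),g\langle v\rangle^{2k})$, i.e.\ with $\partial_\beta\mu$ in the \emph{second} slot, whereas here you need $Q(\partial_{\beta_1}\mu,\partial^\alpha_{\beta-\beta_1}f)$. The relevant estimate is Corollary~\ref{C22}, which gives an $H^s_{k+\gamma/2+2s}\times H^s_{k+\gamma/2}$ bound (not $L^2\times L^2$). The extra $\langle v\rangle^{2s}$ is then absorbed via \eqref{equationw1} (not \eqref{equationw2}, which trades $\alpha$ for $\beta$): $w(\alpha,\beta)\langle v\rangle^{2s}\le w(\alpha,\beta-\beta_1)$ for $|\beta_1|\ge 1$.

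Second, your $L^\infty_x$ strategy for $K\le 6$---``at least one factor has at most $K-2$ derivatives''---fails precisely at $K=2$: when $|\alpha|+|\beta|=2$ and $|\alpha_1|+|\beta_1|=|\alpha-\alpha_1|+|\beta-\beta_1|=1$, neither factor has $\le 0$ derivatives and the $H^2_x\hookrightarrow L^\infty_x$ embedding would push you to three total derivatives. This is exactly where the paper deploys the $L^6_x\times L^3_x$ splitting together with \eqref{L3} and \eqref{Linfty}; it does \emph{not} use $L^3$--$L^6$ for $K=7$. For $K=6,7$ the paper uses only straightforward H\"older splittings ($H^2_x$--$L^2_x$--$L^2_x$, $H^1_x$--$H^1_x$--$L^2_x$, or $L^2_x$--$H^2_x$--$L^2_x$) depending on $|\alpha_1|+|\beta_1|\in\{1,\dots,4\},5,6,7$, with the weight inequality $w(\alpha,\beta)\langle v\rangle^{2s}\le w(\alpha-\alpha_1,\beta-\beta_1)$ doing the work. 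So the $L^3$--$L^6$ interpolation you reserved for $K=7$ should instead be moved to the separate $K=2$ case.
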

\begin{proof}
We focus on the cases $K=6,7$, since the case $K=3,4,5$ can be directly deduced from $K=3$, and we need to treat the case $K=2$ seperately. First we have
\[
(\partial^\alpha_\beta Q(G, f), \partial^\alpha f w^2(\alpha, \beta))_{L^2_{x, v}}= \sum_{\alpha_1 \le \alpha, \beta_1 \le \beta} ( Q(\partial^{\alpha_1}_{\beta_1} G, \partial^{\alpha - \alpha_1}_{\beta- \beta_1} f), \partial^{\alpha}_\beta f w^2(\alpha, \beta))_{L^2_{x, v}}.
\]
We split it into several cases. For the case $\alpha_1 =\beta_1 =0$, after integrating in $x$ in Lemma \ref{L58}, we have
\begin{equation*}
\begin{aligned} 
&( Q( G, \partial^{\alpha }_\beta f), \partial^{\alpha}_\beta f w^2(\alpha, \beta))_{L^2_{x, v}} 
\\
\le&- \frac {1} {12} \Vert  b(\cos \theta) \sin^2 \frac \theta 2  \Vert_{L^1_\theta}\Vert \partial^\alpha_\beta f w(\alpha, \beta)\Vert_{L^2_x L^2_{\gamma/2, *}}^2 - \gamma_1  \Vert \partial^\alpha_\beta f w(\alpha, \beta) \Vert_{L^2_x H^s_{\gamma/2}}^2+ C_{k}  \Vert \partial^\alpha_\beta f \Vert_{L^2_{x, v}}^2
\\
&+\int_{\T^3} C_k\Vert \partial^\alpha_\beta f   \Vert_{L^2_{14}} \Vert g w(\alpha, \beta) \Vert_{ H^s_{ \gamma/2 }}\Vert  \partial^\alpha_\beta f w(\alpha, \beta) \Vert_{ H^s_{  \gamma/2}}+ C_k\Vert   g \Vert_{   L^2_{14}} \Vert \partial^\alpha_\beta f w(\alpha, \beta) \Vert_{  H^s_{  \gamma/2}}\Vert \partial^\alpha_\beta f w(\alpha, \beta) \Vert_{  H^s_{ \gamma/2}} dx.
\end{aligned}
\end{equation*}
For the case $|\beta_1| > 0, \alpha_1=0$, using $\partial_{\beta_1} G = \partial_{\beta_1} \mu + \partial_{\beta_1} g$ we split it into two parts. Remind that because of \eqref{equationw1}, we have if $|\beta_1| > 0$
\[
w(\alpha, \beta) \langle v \rangle^{2s} \le w(\alpha, \beta - \beta_1),
\]
by Corollary  \ref{C22} we have
\begin{equation*}
\begin{aligned} 
|( Q(\partial_{\beta_1} \mu, \partial^\alpha_{\beta - \beta_1} f), \partial^{\alpha}_{\beta} f w^2 (\alpha, \beta))_{L^2_{x, v}} |\le& C_k \Vert \partial^\alpha_{\beta - \beta_1}  f w(\alpha, \beta) \Vert_{L^2_x H^s_{  \gamma/2+2s }}\Vert \partial^\alpha_\beta f w(\alpha, \beta) \Vert_{L^2_x H^s_{\gamma/2}} 
\\
\le &  C_k \Vert \partial^\alpha_{\beta - \beta_1}  f w(\alpha, \beta-  \beta_1 ) \Vert_{L^2_x H^s_{  \gamma/2 }}\Vert \partial^\alpha_\beta f w(\alpha, \beta) \Vert_{L^2_x H^s_{\gamma/2}} .
\end{aligned}
\end{equation*}
For the $\partial_{\beta_1} g$ term, by Lemma \ref{L26} we have
\begin{equation*}
\begin{aligned} 
|( Q(\partial_{\beta_1} g, \partial^\alpha_{\beta - \beta_1} f), \partial^{\alpha}_{\beta} f w^2 (\alpha, \beta))_{L^2_{x, v}} |
\le& \int_{\T^3}  C_k\Vert \partial^\alpha_{\beta - \beta_1} f \Vert_{L^2_{14}} \Vert \partial_{\beta_1} g w(\alpha, \beta)\Vert_{H^s_{ \gamma/2 
 }}\Vert \partial^{\alpha}_{\beta}  f  w(\alpha, \beta) \Vert_{H^s_{  \gamma/2}} 
\\
&+C_k\Vert \partial_{\beta_1} g \Vert_{L^2_{14}} \Vert \partial^\alpha_{\beta - \beta_1} f w(\alpha, \beta)\Vert_{H^s_{  \gamma/2 +2s }}\Vert \partial^{\alpha}_{\beta}  f w(\alpha, \beta) \Vert_{H^s_{ \gamma/2}} dx,
\end{aligned}
\end{equation*}
For $|\alpha_1|>0$, since $\partial^{\alpha_1} \mu =0$, we have
\[
( Q(\partial^{\alpha_1}_{\beta_1} G, \partial^{\alpha - \alpha_1}_{\beta -\beta_1} f), \partial^{\alpha}_\beta f \langle v \rangle^{2k})_{L^2_{x, v}} = ( Q(\partial^{\alpha_1}_{\beta_1} g, \partial^{\alpha - \alpha_1}_{\beta -\beta_1} f), \partial^{\alpha}_\beta f \langle v \rangle^{2k})_{L^2_{x, v}}, 
\]
by Lemma \ref{L26} we have
\begin{equation*}
\begin{aligned} 
|( Q(\partial^{\alpha_1}_{\beta_1} g, \partial^{\alpha - \alpha_1}_{\beta -\beta_1} f), \partial^{\alpha}_\beta f \langle v \rangle^{2k})_{L^2_{x, v}}|
\le& \int_{\T^3}  C_k\Vert \partial^{\alpha - \alpha_1}_{\beta - \beta_1} f \Vert_{L^2_{14}} \Vert \partial^{\alpha_1}_{\beta_1} g w(\alpha, \beta)\Vert_{H^b_{ \gamma/2 }}\Vert \partial^{\alpha}_{\beta}  f  w(\alpha, \beta) \Vert_{L^2_{  \gamma/2}} dx
\\
&+C_k\int_{\T^3} \Vert \partial^{\alpha_1}_{\beta_1} g \Vert_{L^2_{14}} \Vert \partial^{\alpha - \alpha_1}_{\beta - \beta_1} f w(\alpha, \beta)\Vert_{H^s_{ \gamma/2+2s }}\Vert \partial^{\alpha}_{\beta}  f w(\alpha, \beta) \Vert_{H^s_{ \gamma/2}} dx. 
\end{aligned}
\end{equation*}
Gathering  all the terms, we have
\begin{equation*}
\begin{aligned} 
&(\partial^\alpha_\beta Q ( \mu+g, f),  \partial^\alpha_\beta f w^2(\alpha, \beta) )_{L^2_{x, v}}
\\
\le & - \frac {1} {12} \Vert  b(\cos \theta) \sin^2 \frac \theta 2    \Vert_{L^1_\theta}\Vert \partial^\alpha_\beta f w(\alpha, \beta) \Vert_{L^2_xL^2_{\gamma/2, *}}^2 + C_{k}  \Vert \partial^\alpha_\beta f \Vert_{L^2_xL^2_{x, v} }^2 - \gamma_1\Vert \partial^\alpha_\beta f w(\alpha, \beta) \Vert_{L^2_xH^s_{\gamma/2}}^2 
\\
& + C_k \sum_{|\beta_1| < |\beta|} \Vert \partial^\alpha_{\beta_1}  f w(\alpha, \beta_1 ) \Vert_{L^2_x H^s_{\gamma/2}}\Vert \partial^\alpha_\beta f w(\alpha, \beta  ) \Vert_{L^2_xH^s_{\gamma/2}} 
\\
&+C_k \int_{\T^3}  \sum_{\alpha_1 \le \alpha, \beta_1 \le \beta}\Vert \partial^{\alpha - \alpha_1}_{\beta - \beta_1} f \Vert_{L^2_{14}} \Vert \partial^{\alpha_1}_{\beta_1} g w(\alpha, \beta)\Vert_{H^s_{ \gamma/2 }}\Vert \partial^{\alpha}_{\beta}  f  w(\alpha, \beta) \Vert_{H^s_{  \gamma/2}} dx
\\
&+C_k\int_{\T^3} \sum_{\alpha_1 \le \alpha, \beta_1 \le \beta, |\alpha_1| +|\beta_1| >0} \Vert \partial^{\alpha_1}_{\beta_1} g \Vert_{L^2_{14}} \Vert \partial^{\alpha - \alpha_1}_{\beta - \beta_1} f w(\alpha, \beta)\Vert_{H^s_{  \gamma/2+2s }}\Vert \partial^{\alpha}_{\beta}  f w(\alpha, \beta) \Vert_{H^s_{ \gamma/2}} dx
\\
&+C_k\int_{\T^3} \Vert  g \Vert_{L^2_{14}} \Vert \partial^{\alpha }_{\beta} f w(\alpha, \beta)\Vert_{H^s_{  \gamma/2 }}\Vert \partial^{\alpha}_{\beta}  f w(\alpha, \beta) \Vert_{H^s_{ \gamma/2}} dx.
\end{aligned}
\end{equation*}
 we only need to prove that 
\[
\int_{\T^3} \Vert  g \Vert_{L^2_{14}} \Vert \partial^{\alpha }_{\beta} f w(\alpha, \beta)\Vert_{H^s_{  \gamma/2 }}\Vert \partial^{\alpha}_{\beta}  f w(\alpha, \beta) \Vert_{H^s_{ \gamma/2}} dx \lesssim \sqrt{E^g_{K, k}}   D^f_{K, k}, 
\] 
and  for all  $\alpha_1 \le \alpha, \beta_1 \le \beta, |\alpha_1| +|\beta_1| >0$
\[
\int_{\T^3} \Vert \partial^{\alpha_1}_{\beta_1} g \Vert_{L^2_{14}} \Vert \partial^{\alpha - \alpha_1}_{\beta - \beta_1} f w(\alpha, \beta)\Vert_{H^s_{  \gamma/2+2s }}\Vert \partial^{\alpha}_{\beta}  f w(\alpha, \beta) \Vert_{H^s_{ \gamma/2}} dx  \lesssim  \sqrt{E^g_{K, k}}   D^f_{K, k}, 
\]
since the other nonlinear term follows by directly changing $f$ and $g$ and even more simple since $s>0$.  First , for the case $|\alpha_1| = |\beta_1| =0 $,  we have
\begin{equation*}
\begin{aligned}
\int_{\T^3} \Vert  g \Vert_{L^2_{14}} \Vert \partial^{\alpha}_{\beta } f w(\alpha, \beta)\Vert_{H^s_{  \gamma/2 }}\Vert \partial^{\alpha}_{\beta}  f w(\alpha, \beta) \Vert_{H^s_{ \gamma/2}} dx 
&\lesssim \Vert  g \Vert_{L^\infty_xL^2_{14}} \Vert \partial^{\alpha  }_{\beta } f w(\alpha, \beta)\Vert_{L^2_x H^s_{  \gamma/2 }}\Vert \partial^{\alpha}_{\beta}  f w(\alpha, \beta) \Vert_{L^2_x H^s_{ \gamma/2}} 
\\
\lesssim& \Vert  g \Vert_{H^2_x L^2_{14}} \Vert \partial^{\alpha  }_{\beta } f w(\alpha, \beta)\Vert_{L^2_x H^s_{  \gamma/2 }}\Vert \partial^{\alpha}_{\beta}  f w(\alpha, \beta) \Vert_{L^2_x H^s_{ \gamma/2}} 
\\
\lesssim&  \sqrt{E^g_{K, k}}   D^f_{K, k}.
\end{aligned}
\end{equation*}
For the term
\[
\int_{\T^3} \sum_{\alpha_1 \le \alpha, \beta_1 \le \beta, |\alpha_1| +|\beta_1| >0} \Vert \partial^{\alpha_1}_{\beta_1} g \Vert_{L^2_{14}} \Vert \partial^{\alpha - \alpha_1}_{\beta - \beta_1} f w(\alpha, \beta)\Vert_{H^s_{  \gamma/2+2s }}\Vert \partial^{\alpha}_{\beta}  f w(\alpha, \beta) \Vert_{H^s_{ \gamma/2}} dx.:=(*)
\]
\underline{If $K=6,7$}. For the case $1 \le |\alpha_1| + |\beta_1|  \le 4 $, we have
\begin{align*}
(*)&\lesssim  \Vert \partial^{\alpha_1}_{\beta_1} g \Vert_{H^2_x L^2_{14}} \Vert \partial^{\alpha - \alpha_1}_{\beta - \beta_1} f w(\alpha, \beta)\Vert_{L^2_x H^s_{  \gamma/2+2s }}\Vert \partial^{\alpha}_{\beta}  f w(\alpha, \beta) \Vert_{L^2_x H^s_{ \gamma/2}} 
\\
&\lesssim \Vert \langle v \rangle^{14} g \Vert_{H^6_{x, v} } \Vert \partial^{\alpha - \alpha_1}_{\beta - \beta_1} f w(\alpha, \beta)\Vert_{L^2_x H^s_{  \gamma/2+2s }}   \sqrt{D^f_{K, k}} \lesssim \sqrt{E^g_{6, k}} D^f_{K, k}
\end{align*}
for the case $ |\alpha_1| +|\beta_1|  =5$, we have
\begin{align*}
(*)\lesssim&  \Vert \partial^{\alpha_1}_{\beta_1} g \Vert_{H^1_x L^2_{14}} \Vert \partial^{\alpha - \alpha_1}_{\beta - \beta_1} f w(\alpha, \beta)\Vert_{H^1_x H^s_{  \gamma/2+2s }}\Vert \partial^{\alpha}_{\beta}  f w(\alpha, \beta) \Vert_{L^2_x H^s_{ \gamma/2}} 
\\
\lesssim& \Vert \langle v \rangle^{14} g \Vert_{H^6_{x, v} } \Vert \partial^{\alpha - \alpha_1}_{\beta - \beta_1} f w(\alpha, \beta)\Vert_{H^1_x H^s_{  \gamma/2+2s }}    \sqrt{D^f_{K, k}} \lesssim \sqrt{E^g_{6, k}} D^f_{K, k}
\end{align*}
for the case $ |\alpha_1| +|\beta_1|  =6$, we have
\begin{align*}
(*)\lesssim&  \Vert \partial^{\alpha_1}_{\beta_1} g \Vert_{L^2_x L^2_{14}} \Vert \partial^{\alpha - \alpha_1}_{\beta - \beta_1} f w(\alpha, \beta)\Vert_{H^2_x H^s_{  \gamma/2+2s }}\Vert \partial^{\alpha}_{\beta}  f w(\alpha, \beta) \Vert_{L^2_x H^s_{ \gamma/2}} 
\\
\lesssim& \Vert \langle v \rangle^{14} g \Vert_{H^6_{x, v} } \Vert \partial^{\alpha - \alpha_1}_{\beta - \beta_1} f w(\alpha, \beta)\Vert_{H^2_x H^s_{  \gamma/2+2s }}    \sqrt{D^f_{K, k}} \lesssim \sqrt{E^g_{6, k}} D^f_{K, k}
\end{align*}
the estimates above used the fact that 
\begin{equation}
\label{equationw6}
w(\alpha, \beta) \langle v \rangle^{2s} \le w(\alpha -\alpha_1, \beta-\beta_1) , \quad |\alpha_1| +|\beta_1|  \ge 1
\end{equation}
because of \eqref{equationw1}.
For the case  $K=7$, $|\alpha_1| +|\beta_1| =7$, this time $\alpha = \alpha_1 , \beta =\beta_1$. Similarly as  \eqref{equationw6}, we have
$w(\alpha, \beta) \langle v \rangle^{\gamma/2+s} \le w(2, 1)$.
So
\begin{align*}
(*)\lesssim&  \Vert \partial^{\alpha}_{\beta} g \Vert_{L^2_x L^2_{14}} \Vert  f w(\alpha, \beta)\Vert_{H^2_x H^s_{  \gamma/2+2s }}\Vert \partial^{\alpha}_{\beta}  f w(\alpha, \beta) \Vert_{L^2_x H^s_{ \gamma/2}} 
\lesssim\Vert \langle v \rangle^{14} g \Vert_{H^7_{x, v} } \Vert  f w(\alpha, \beta)\Vert_{H^2_x H^s_{  \gamma/2+2s }}   \sqrt{D^f_{7, k}} 
\\
\lesssim &   \Vert  f w(\alpha, \beta)\Vert_{H^2_x H^s_{  \gamma/2+2s }}   \sqrt{D^f_{7, k}}  \sqrt{D^g_{7, k}} 
\lesssim   \sqrt{E^f_{6, k}}   \sqrt{D^f_{7, k}}  \sqrt{D^g_{7, k}} 
\end{align*}
and the proof is thus finished. \\
\underline{If $K=2$.}
we split it into two cases,  $|\alpha_1| + |\beta_1| =1 $ and  $|\alpha_1| + |\beta_1| = 2 $, for the case $|\alpha_1| + |\beta_1| =1 $
by $\Vert fg\Vert_{L^2_x} \le \Vert  f \Vert_{L^6_x} \Vert g \Vert_{L^3_x}$,
we have
\begin{equation*}
\begin{aligned}
(*)
\lesssim&  \Vert \partial^{\alpha_1}_{\beta_1} g \Vert_{L^6_x L^2_{14}} \Vert \partial^{\alpha - \alpha_1}_{\beta - \beta_1} f w(\alpha, \beta)\Vert_{L^3_x H^s_{  \gamma/2+2s }}\Vert \partial^{\alpha}_{\beta}  f w(\alpha, \beta) \Vert_{L^2_x H^s_{ \gamma/2}} 
\\
\lesssim& \Vert \partial^{\alpha_1}_{\beta_1} g \Vert_{H^1_xL^2_{14}} \Vert \partial^{\alpha - \alpha_1}_{\beta - \beta_1} f w(\alpha, \beta)\Vert_{L^3_x H^s_{  \gamma/2+2s }}\Vert \partial^{\alpha}_{\beta}  f w(\alpha, \beta) \Vert_{L^2_x H^s_{ \gamma/2}} 
\\
\lesssim& \Vert \langle v \rangle^{14} g \Vert_{H^2_{x, v} } \Vert \partial^{\alpha - \alpha_1}_{\beta - \beta_1} f w(\alpha, \beta)\Vert_{L^3_x H^s_{  \gamma/2+2s }}    \sqrt{D^f_{2,k}}, 
\end{aligned}
\end{equation*}
 we again split it into two parts  $|\alpha|+|\beta| = 1$ and  $|\alpha|+|\beta| = 2$, for the case  $|\alpha_1| + |\beta_1| = |\alpha|+|\beta| =1$ we have $|\alpha - \alpha_1| =|\beta - \beta_1| =0 $, by
$\Vert f  \Vert_{L^3_xL^2_v}  \lesssim \Vert f  \Vert_{H^{1/2}_x L^2_v } \lesssim\Vert f \langle v \rangle^{k} \Vert_{L^2_{x}}+ \Vert f \langle v \rangle^{-k} \Vert_{H^1_x}$,we have
\[
\Vert f w(\alpha, \beta)\Vert_{L^3_x H^s_{  \gamma/2+2s }}  \lesssim \Vert f w(0, 0)\Vert_{L^2_x H^s_{  \gamma/2 }}  + \Vert f w(1,0)\Vert_{H^1_x H^s_{  \gamma/2 }} \lesssim  \sqrt{D^f_{2,k}}
\]
because of \eqref{equationw1},\eqref{equationw2}, we have
\[
\max_{|\alpha|+|\beta| =1}w^2 (\alpha, \beta) \langle v \rangle^{4s} \le w^2(1, 0) \langle v \rangle^{4s} \le  w(0, 0) w(1, 0) , 
\]
then we consider the case $|\alpha_1| +|\beta_1| =1, |\alpha| + |\beta| =2$,  this time we have $|\alpha - \alpha_1| =1, |\beta - \beta_1|=0$ or $|\alpha - \alpha_1| =0, |\beta - \beta_1| =1$, for the first case we have
\[
\Vert \partial^{\alpha - \alpha_1} f w(\alpha, \beta)\Vert_{L^3_x H^s_{  \gamma/2+2s }}  \le  \Vert f w(1, 0)\Vert_{H^1_x H^s_{ \gamma/2 }} + \Vert f w( 2 , 0)\Vert_{H^2_x H^s_{ \gamma/2 }}  \lesssim  \sqrt{D^f_{2,k}}
\]
because of \eqref{equationw1},\eqref{equationw2}, we have
\[
\max_{|\alpha|+|\beta| =2}w^2 (\alpha, \beta) \langle v \rangle^{4s} \le w^2(2, 0) \langle v \rangle^{4s}  \le w(1, 0) w(2, 0) , 
\]
for the second case we have $|\beta| \ge 1$ and 
\[
\Vert  \partial_{\beta - \beta_1}  f w(\alpha, \beta)\Vert_{L^3_x H^s_{ \gamma/2+2s }} \le \Vert f w(0, 1)\Vert_{L^2_x H^{1+s}_{ \gamma/2 }} + \Vert f  w( 1 , 1)\Vert_{H^1_x H^{1+s}_{ \gamma/2 }} \lesssim  \sqrt{D^f_{2,k}}
\]
because of \eqref{equationw1},\eqref{equationw2}, we have
\[
\max_{|\alpha|+|\beta| =2, |\beta| \ge 1}w^2 (\alpha, \beta) \langle v \rangle^{4s}\le w^2(1, 1) \langle v \rangle^{4s}  \le  w(0, 1) w(1, 1), 
\]
for the case $|\alpha_1|+|\beta_1| = 2$, it is easily seen that this time $|\alpha| +|\beta|=2$ and $|\alpha - \alpha_1| = |\beta - \beta_1|=0$, so we have
\begin{equation*}
\begin{aligned}
&\int_{\T^3} \Vert \partial^{\alpha_1}_{\beta_1} g \Vert_{L^2_{14}} \Vert \partial^{\alpha - \alpha_1}_{\beta - \beta_1} f w(\alpha, \beta)\Vert_{H^s_{  \gamma/2+2s }}\Vert \partial^{\alpha}_{\beta}  f w(\alpha, \beta) \Vert_{H^s_{ \gamma/2}} dx
\\
\lesssim&  \Vert \partial^{\alpha}_{\beta} g \Vert_{L^2_x L^2_{14}} \Vert f w(\alpha, \beta)\Vert_{L^\infty_x H^s_{  \gamma/2+2s }}\Vert \partial^{\alpha}_{\beta}  f w(\alpha, \beta) \Vert_{L^2_x H^s_{ \gamma/2}} 
\\
\lesssim& \Vert \langle v \rangle^{14} g \Vert_{H^2_{x, v} } \Vert f w(\alpha, \beta)\Vert_{L^\infty_x H^s_{  \gamma/2+2s }}     \sqrt{D^f_{2,k}}, 
\end{aligned}
\end{equation*}
by 
\[
\Vert f \Vert_{L^\infty_xL^2_v} \lesssim \Vert f \Vert_{H^{8/5}_x L^2_v} \le \Vert f \langle v \rangle^{\frac 3 2 k}\Vert_{H^1_{x}L^2_v}^{3/5} \Vert f\langle v \rangle^{-k} \Vert_{H^2_xL^2_x }^{2/5} \lesssim \Vert f \langle v \rangle^{\frac 3 2 k} \Vert_{H^1_{x}L^2_v}+ \Vert f \langle v \rangle^{-k}  \Vert_{H^2_xL^2_v}
\]
we have
\[
\Vert  f w(\alpha, \beta)\Vert_{L^\infty_x H^s_{ \gamma/2  +2s}} \le \Vert f w(1, 0)\Vert_{H^1_x H^s_{ \gamma/2 }} + \Vert f w(2 , 0)\Vert_{H^2_x H^s_{ \gamma/2 }}\lesssim  \sqrt{D^f_{2,k}}
\]
because of \eqref{equationw1},\eqref{equationw2}, we have
\[
\max_{|\alpha|+|\beta| =2}w^2 (\alpha, \beta) \langle v \rangle^{4s}\le  w^2(2, 0) \langle v \rangle^{4s} \le  w(1, 0)^{4/5} w(2, 0)^{6/5}, 
\]
so we always have $(*)\lesssim \sqrt{E^g_{2,k}}D^f_{2,k}$. And the proof is done after gathering the terms. 
\end{proof}

\subsection{Transport Part}
At the end of this section, we give the estimate of the transport part term.

\begin{lem}\label{L28}
Suppose $|\beta|>0$. For any smooth function $f$, for any $\eta>0$ small we have
\[
\sum_{|\alpha| +|\beta| \le K} C_{|\alpha|, |\beta|}^2  ( \partial^\alpha_{\beta} (v \cdot \nabla_x f)  ,\partial^\alpha_{\beta}f  w^2(\alpha, \beta)  )_{L^2_{x, v}} \le \eta \sum_{|\alpha| +|\beta| \le K} C_{|\alpha|, |\beta|}^2  \Vert \partial^\alpha_\beta  f w(\alpha, \beta)    \Vert_{L^2_x L^2_{\gamma/2 }}^2  
\]

\end{lem}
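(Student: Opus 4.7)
First I would expand the transport operator by the Leibniz rule: since $\pa_{v_j} v_i=\delta_{ij}$ and all higher $v$-derivatives of $v$ vanish, one has
\[
\pa^\alpha_\beta(v\cdot\na_x f) \;=\; v\cdot\na_x \pa^\alpha_\beta f \;+\; \sum_{j:\,\beta_j\ge 1}\beta_j\,\pa_{x_j}\pa^\alpha_{\beta-e_j}f,
\]
where $e_j$ is the $j$-th unit multi-index. The leading term contributes zero to the inner product against $\pa^\alpha_\beta f\,w^2(\alpha,\beta)$: integrate by parts in $x$ on $\T^3$, using that $w$ is $x$-independent and $\na_x\cdot v=0$, so the integrand is a perfect $x$-divergence. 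The residual terms exist only because $|\beta|\ge 1$, and each of them has one extra $x$-derivative and one fewer $v$-derivative compared with $\pa^\alpha_\beta f$.

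Next, for each residual term, I would invoke the weight identity \eqref{equationw2}, $w(\alpha,\beta) = \<v\>^{\min\{\gamma,0\}}\,w(\alpha+1,\beta-1)$, and split the factor $\<v\>^{\min\{\gamma,0\}}$ symmetrically between the two functions in the inner product. Since $\tfrac{1}{2}\min\{\gamma,0\}\le \gamma/2$ in both the hard and soft potential cases, Cauchy--Schwarz yields
\[
\bigl|\bigl(\pa_{x_j}\pa^\alpha_{\beta-e_j}f,\,\pa^\alpha_\beta f\,w^2(\alpha,\beta)\bigr)_{L^2_{x,v}}\bigr| \le \|\pa_{x_j}\pa^\alpha_{\beta-e_j}f\,w(\alpha+1,\beta-1)\|_{L^2_xL^2_{\gamma/2}}\,\|\pa^\alpha_\beta f\,w(\alpha,\beta)\|_{L^2_xL^2_{\gamma/2}}.
\]
Both factors are dissipation norms already appearing on the right-hand side of the claim --- the first at multi-index level $(|\alpha|+1,|\beta|-1)$, which is still of total order at most $K$, and the second at level $(|\alpha|,|\beta|)$.

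Finally I would multiply by $C_{|\alpha|,|\beta|}^2$, apply Young's inequality with parameter $\eta\,C_{|\alpha|+1,|\beta|-1}^2/C_{|\alpha|,|\beta|}^2$, and sum over $|\alpha|+|\beta|\le K$. The coefficient attached to the first factor becomes $\tfrac{\eta}{2}C_{|\alpha|+1,|\beta|-1}^2$, which fits into the $(|\alpha|+1,|\beta|-1)$-slot of the dissipation sum on the right-hand side. The coefficient attached to the second factor is $C_{|\alpha|,|\beta|}^4/(2\eta\,C_{|\alpha|+1,|\beta|-1}^2)$, and the hierarchy $C_{|a|+1,|b|-1}\gg C_{|a|,|b|}$ can be tuned so that this is no larger than $\tfrac{\eta}{2}C_{|\alpha|,|\beta|}^2$. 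The only delicate point is the bookkeeping of this cascade of constants; the analytic content is entirely carried by \eqref{equationw2}, which trades a $v$-derivative for an $x$-derivative at a cost that the $\<v\>^{\gamma/2}$ weight of the dissipation can absorb.
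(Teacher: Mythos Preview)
Your proof is correct and follows essentially the same route as the paper: expand $\pa^\alpha_\beta(v\cdot\na_x f)$ by Leibniz so that the leading term vanishes by integration by parts in $x$, bound each commutator term using the weight relation \eqref{equationw2} together with Cauchy--Schwarz to land in the $L^2_xL^2_{\gamma/2}$ dissipation norms at levels $(|\alpha|+1,|\beta|-1)$ and $(|\alpha|,|\beta|)$, and then close with Young's inequality and the hierarchy $C_{|\alpha|+1,|\beta|-1}\gg C_{|\alpha|,|\beta|}$. Your treatment of the Young step and the bookkeeping of the constants is in fact more explicit than the paper's.
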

\begin{proof}

Notice that $\partial_{v_i} (v \cdot \nabla_x f) = \partial_{x_i} f + v \cdot \nabla_x \partial_{v_i} f$ for $i =1, 2, 3$
and $w(\alpha, \beta) \le \langle v \rangle^{\gamma} w (|\alpha|+1, |\beta|-1 )$ for $ |\alpha| \ge 0, |\beta| \ge 1$. So for $|\beta|>0$, we have
\begin{equation}
\label{eqL28}
\Vert \partial^\alpha_{\beta} (v \cdot \nabla_x f)  ,\partial^\alpha_{\beta}f  w^2(\alpha, \beta)  \Vert_{L^2_{x, v}} \le C_k\Big(\sum_{|\beta_2|=|\beta|-1,  |\alpha_2| =|\alpha|+1} \Vert \partial^{\alpha_2}_{\beta_2}  f w(\alpha_2, \beta_2)\Vert_{L^2_xL^2_{\gamma/2}}\Big) \Vert \partial^\alpha_{\beta} f w(\alpha, \beta) \Vert_{L^2_{x}L^2_{\gamma/2} }.
\end{equation}
next, notice that for any  $\eta>0$,
\begin{equation*}
\begin{aligned} 
& C_{|\alpha|, |\beta|}^2 C_k\sum_{|\beta_2|=|\beta|-1,  |\alpha_2| =|\alpha|+1} \Vert \partial^{\alpha_2}_{\beta_2}  f w(\alpha_2, \beta_2)\Vert_{L^2_x L^2_{\gamma/2}}\cdot\Vert \partial^\alpha_{\beta} f w(\alpha, \beta) \Vert_{L^2_{x}  L^2_{\gamma/2} } 
\\
\le&  \frac {C_{|\alpha|, |\beta|} C_k} \eta   \sum_{|\beta_2|=|\beta|-1,  |\alpha_2| =|\alpha|+1} \Vert \partial^{\alpha_2}_{\beta_2}  f w(\alpha_2, \beta_2)\Vert_{L^2_x L^2_{\gamma/2}}\cdot \eta C_{|\alpha|, |\beta|} \Vert \partial^\alpha_{\beta} f w(\alpha, \beta) \Vert_{L^2_{x}  L^2_{\gamma/2} } 
\end{aligned}
\end{equation*}
sum up on $|\alpha| +|\beta| \le K$ on left side of \eqref{eqL28} and recall the definition of $C_{|\alpha|,|\beta|}$, we chould choose small $\eta$ that satisfies
$\frac {C_{|\alpha|, |\beta|} C_k} \eta  \ll C_{|\alpha|+1, |\beta|-1}$. The lemma is thus proved.

\end{proof}
After combining the lemmas above, we have the following local existence.
\begin{thm}
\label{thmlocal}
Let $-3<\gamma\le 1, 0<s<1, \gamma+2s\ge -1$ for Botzmann case, $-3\le \gamma\le 1$ for Landau case. for any $K\ge 3$ an d $k\ge 14$, there exist small constants $\varepsilon_0,\tau_0, T_0>0$, such that if $(f_0, E_0, B_0)$ satisfy $E_{K,k}(0)\le \varepsilon_0$. Then the Cauchy problem
\begin{equation}
	\label{Cauchy1}
	\left\{
	\begin{aligned}
&\partial_t f_\pm+v \cdot \nabla_x f_\pm \mp (E + v \times B) \cdot \nabla_v f_\pm \pm ( E  \cdot v)   \mu =Q(f_\pm+f_\mp,\mu)+Q(2\mu+f_\pm+f_\mp, f_\pm)\\
		&\partial_t E - \nabla_x \times B = -\int_{\R^3} v (f_+-f_{-}) dv:=G, \quad
\nabla_x \cdot E = \int_{\R^3} (f_+-f_{-}) dv   ,\\
&\partial_t B +  \nabla_x \cdot E =0,\quad \nabla_x \cdot B =0, \quad f(0) = f_0,\quad E(0) = E_0,\quad B(0)=B_0
		\end{aligned}\right.
\end{equation}
admits a unique weak solution $(f,E,B)\in L^{\infty}_{T_0}X_k$ satisfying $\mu+f  \ge 0$ and $E_{K,k}(t)(f)\le \tau_0$ for any $0\le t\le T_0$.
\end{thm}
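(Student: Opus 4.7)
The plan is to build the local solution by a standard iteration scheme and then close the argument with the a priori bounds assembled in Section~\ref{sec2}. Set $(f^0,E^0,B^0)=(f_0,E_0,B_0)$ and, given $(f^n,E^n,B^n)$, define $(f^{n+1},E^{n+1},B^{n+1})$ as the solution of the linear (in $f^{n+1}$) problem
\begin{align*}
\partial_t f^{n+1}_\pm+v\cdot\nabla_x f^{n+1}_\pm\pm(E^n+v\times B^n)\cdot\nabla_v f^{n+1}_\pm\mp E^{n+1}\cdot v\mu&=L_\pm f^{n+1}+\Gamma_\pm(f^n,f^{n+1}),
\end{align*}
coupled with the Maxwell system \eqref{VMB4} sourced by $f^{n+1}$ and the initial data $(f_0,E_0,B_0)$. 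Each step is solvable by a standard Galerkin argument since the linearized collision operator $L_\pm$ is dissipative modulo compact and the transport-plus-Maxwell part is symmetric-hyperbolic.

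Next, I would establish a uniform bound on $E_{K,k}^{n+1}(t)$ on a time interval $[0,T_0]$. Applying $\partial^\alpha_\beta$ to the equation for $|\alpha|+|\beta|\le K$, testing against $C_{|\alpha|,|\beta|}^2\partial^\alpha_\beta f^{n+1}_\pm w^2(\alpha,\beta)$ and summing, I would combine: Lemma~\ref{L28} for the transport commutator (absorbed by the hierarchy $C_{|a|+1,|b|-1}\gg C_{|a|,|b|}$); Lemma~\ref{L58} or Lemma~\ref{L210} with $g=0$ for the coercive piece $Q(\mu,\cdot)$; Lemma~\ref{L58}/\ref{L210} with $g=f^n$ for the nonlinear gain; and Lemmas~\ref{L211}~and~\ref{L212} for the Vlasov-Maxwell contributions, noting that \eqref{L211eq2} converts the linear forcing $\mp E^{n+1}\cdot v\mu$ into exactly $\partial_t(\|E^{n+1}\|^2+\|B^{n+1}\|^2)$ modulo a microscopic remainder. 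This yields a schematic differential inequality
\begin{equation*}
\tfrac{d}{dt}E_{K,k}^{n+1}+\lambda D_{K,k}^{n+1}\le C\bigl(\sqrt{E_{K,k}^n}+\sqrt{E_{K,k}^{n+1}}\bigr)D_{K,k}^{n+1}+CE_{K,k}^{n+1},
\end{equation*}
and choosing $\varepsilon_0,\tau_0,T_0$ sufficiently small closes the induction $E_{K,k}^n(t)\le\tau_0\Rightarrow E_{K,k}^{n+1}(t)\le\tau_0$ for $t\in[0,T_0]$.

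Convergence of the sequence is then obtained from a lower-order energy estimate (e.g.\ in $E_{K-1,k}$) on the difference $(f^{n+1}-f^n,E^{n+1}-E^n,B^{n+1}-B^n)$, which satisfies a linear system with source linear in the previous difference and coefficients of size $O(\sqrt{\tau_0})$. Shrinking $\tau_0$ if necessary, one obtains a contraction factor strictly less than $1$ on $[0,T_0]$, yielding a limit $(f,E,B)$ which is a weak solution satisfying $E_{K,k}(t)\le\tau_0$ on $[0,T_0]$ by lower semicontinuity. Non-negativity $\mu+f\ge 0$ follows from the Sobolev embedding $H^{K}_{x,v}\hookrightarrow L^\infty_{x,v}$ ($K\ge 3$) combined with the polynomial weight: $|f(t,x,v)|\le C\sqrt{\tau_0}\langle v\rangle^{-k+c}$, which is majorized by $\mu(v)$ for $k$ large. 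Uniqueness is a direct energy estimate on the difference of two solutions, mirroring the convergence argument.

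The main technical obstacle is that the Maxwell part is one $x$-derivative less regular than the kinetic part, so at top order $|\alpha|=K$ the electromagnetic contribution must be controlled using only $E^{eb}_K$ rather than $E^{eb}_{K+1}$. The cure is the weight identity~\eqref{equationw5}, which lets the $v$-derivative on $\nabla_v f$ in the Vlasov-Maxwell nonlinearity be exchanged for a $\langle v\rangle^{\gamma/2}$ factor matching the dissipation norm, at the cost of a $\langle v\rangle$-loss that is precisely compensated by the decrement structure built into $w(\alpha,\beta)$. The calibrated constants $C_{|\alpha|,|\beta|}$ in \eqref{functional1} then absorb all mixed commutator terms into the dissipation $D_{K,k}^{n+1}$, making the closure in the above differential inequality go through.
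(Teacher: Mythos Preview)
The paper does not give a detailed proof of this theorem; it simply records it as a consequence of the preceding lemmas. Your iteration/contraction outline for existence and uniqueness is the standard route and is consistent with what the paper intends.

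However, your non-negativity argument has a genuine gap. You claim that $|f(t,x,v)|\le C\sqrt{\tau_0}\langle v\rangle^{-k+c}$ is majorized by $\mu(v)$ for $k$ large. This is false: the Gaussian $\mu(v)=(2\pi)^{-3/2}e^{-|v|^2/2}$ decays faster than any polynomial, so for every fixed $k$ and every $\tau_0>0$ one has $C\sqrt{\tau_0}\langle v\rangle^{-k+c}\gg\mu(v)$ as $|v|\to\infty$. Smallness in a polynomially weighted space can therefore never, by itself, force $\mu+f\ge 0$. This is precisely one of the structural differences between the polynomial-perturbation framework of this paper and the classical $\mu^{-1/2}$-weighted framework, where the analogous pointwise bound does go through.

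Non-negativity here must instead be propagated through the iteration at the level of $F^{n+1}_\pm=\mu+f^{n+1}_\pm$, using the structure of the collision operator rather than smallness of $f$. For Landau one writes $Q(F^n,F^{n+1})=(a_{ij}*F^n)\partial_{ij}F^{n+1}-(c*F^n)F^{n+1}$ with $(a_{ij}*F^n)$ nonnegative definite whenever $F^n\ge 0$, and a maximum-principle argument on the resulting linear parabolic equation gives $F^{n+1}\ge 0$. For non-cutoff Boltzmann one typically approximates by cutoff kernels (where the gain--loss splitting makes positivity immediate) and passes to the limit. In either case the linearization in your scheme has to be arranged so that the equation for $F^{n+1}$ has this positivity-preserving structure with coefficients built from $F^n\ge 0$; your current linearization $L_\pm f^{n+1}+\Gamma_\pm(f^n,f^{n+1})$ already does this, but the conclusion must come from the maximum principle, not from a pointwise comparison with $\mu$.
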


\section{Macroscopic Estimate}
\label{sec3}
In this section, we will derive the macroscopic dissipation for the Vlasov-Maxwell-Boltzmann/Landau equation. Remind $\bm{P}, a, b, c$ defined in \eqref{projection}. 
we also rewrite the equation \eqref{VMB1} as
the equation becomes
\begin{equation}
\label{equationmacro01}
\partial_t f +v \cdot \nabla_x f  \mp E  \cdot v   \mu =L_\pm f+N_\pm(f)
\end{equation}
here the linear part $L$ and nonlinear part $N$ are defined by
\begin{equation}\label{nlp}
L_\pm f:=Q(f_\pm+f_\mp, \mu)+2Q(\mu, f_\pm), \quad N_\pm(f):=\pm( E + v \times B) \cdot \nabla_v f +Q(f_\pm , f_\pm)+Q(f_\mp , f_\pm)
\end{equation}
 Remind that the electromagnetic fields $[E,B]$ satisfy the Maxwell equation
\begin{equation}
\label{maxwell1}
\partial_tE-\nabla_x\times B=-G,\quad\partial_tB+\nabla_x\times E=0,\quad
\nabla_x\cdot E=a_{+}-a_{-}, \quad\nabla_x\cdot B=0
\end{equation}
where 
\begin{equation}
\label{equationG}
G=((\bm I-\bm P)f\cdot [1,-1], v)_{L^2_v}
\end{equation}
we also consider linearized equation of \eqref{equationmacro01}, which is
\begin{equation}
\label{equationmacro02}
\partial_t \mathsf f =-v \cdot \nabla_x\mathsf f  \pm E  \cdot v   \mu +L_\pm\mathsf  f
\end{equation}
our next goal is to estimate $a(t, x), b(t, x),c(t, x)$ in terms of $(I-P) f$.

Unlike \cite{G4}, our $P$ is not symmetric we can not just compare the $v_i, v_j$  terms on both sides of \eqref{equationmacro01}. We have

\begin{lem}\label{L420}
Let $K=2,3,4,5,6$. Let $(f,E,B)$ be the solution to \eqref{VMB3} and \eqref{VMB4}. Then there exists an instant energy functional $E_{K,\text{int}}(t)$ satisfying
\begin{equation}
\label{macro1}
E_{K, \text{int}}(t)\lesssim \|f\|_{H^K_xL^2_v} + \|[E,B]\|_{H^{K-1}_x},
\end{equation}
such that 
\begin{equation}
\label{macro2}
\partial _tE_{K, \text{int}}(t) + \lambda \|[a, b, c]\|^2_{H^K_x} + \lambda \|[E,B]\|^2_{H^{K-1}_x}
\lesssim \|(\bm I-\bm P) f\|^2_{H^K_xL^2_{10}}+\|  N_{{\|}}  \|^2_{H^K_x}+ \|E\|^4_{L^2_x} + \|B\|^4_{L^2_x},  
\end{equation}
for some constant $\lambda>0$,  
where $\partial^{\alpha}N_{{\|}}$ is the $L^2_v$ projection of $\partial^{\alpha}N(f)(t, x, v)$ onto the subspace generated by the  basis
\[
\{[1,0]\mu,\,\,[0,1]\mu,\,\,[1,1]v_i\mu,\,\,[1,1]v_iv_j\mu,\,\,[1,1]v_i|v|^2\mu\},\quad 1\le i, j\le 3.?
\]
as a direct result, for $f$ as the solution of $\partial_t \mathsf f +v \cdot \nabla_x \mathsf f  \mp E  \cdot v   \mu =L_\pm \mathsf f$, $(E,B)$ satisfy \eqref{VMB4},  there exists a functional $E_{K, \text{int}}\lesssim {\|}f{\|}_{H^K_xL^2_v}$, such that
\begin{equation}
\label{macro3}
\partial_tE_{K, \text{int}}(t)+\lambda{\|}[a,b,c]{\|}^2_{H^K_x}\lesssim {\|}(I-P)f_\pm{\|}^2_{H^K_xL^2_{10}}.
\end{equation}
\end{lem}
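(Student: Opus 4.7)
The plan is to adapt the classical Guo--Strain macroscopic estimate for the VMB system to the polynomial weight framework, building an instant energy functional from interaction functionals coupling the fluid variables $(a_\pm, b, c)$ and the electromagnetic field $[E, B]$. The estimate naturally splits into three components: dissipation for spatial derivatives of $[a, b, c]$, recovery of the full $H^K_x$ norm from the homogeneous part, and dissipation for $[E, B]$ in $H^{K-1}_x$.

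First, I would apply the projection $\bm{P}$ to \eqref{equationmacro01} and test against the basis $\{\mu, v_i \mu, (|v|^2 - 3)\mu\}$ of $\ker L$, which annihilates $L_\pm f$ and converts the equation into a symmetric hyperbolic-type fluid system for $(a_\pm, b, c)$. Differentiating the equations in $t$ and $x$ and substituting cross-terms, one finds that each spatial derivative $\partial_{x_j}$ of a fluid variable appears on the right-hand side as a moment of $\partial_t (\bm I - \bm P) f$, the Lorentz forcing $E \cdot v \mu$, and $N_\|$. Next, for each $|\alpha| \le K - 1$, I would construct interaction functionals of the schematic form
\begin{equation*}
\mathcal{I}_\alpha = \sum_{i, j} \int_{\T^3} \partial^\alpha \Phi_{ij}[(\bm I - \bm P) f] \, \partial^\alpha \partial_{x_j} [a_\pm, b, c] \, dx,
\end{equation*}
where $\Phi_{ij}$ are velocity moments against $v_i v_j \mu$-type test functions. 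Integrating by parts in $x$ and substituting the fluid system produces positive terms $\lambda \|\nabla_x [a, b, c]\|^2_{H^{K-1}_x}$, while all error terms are absorbed into $\|(\bm I - \bm P) f\|^2_{H^K_x L^2_{10}}$, $\|N_\|\|^2_{H^K_x}$, or quartic contributions $\|E\|^4_{L^2_x} + \|B\|^4_{L^2_x}$ arising from the nonlinear Lorentz force inside $N_\|$.

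Second, to upgrade $\|\nabla_x [a, b, c]\|_{H^{K-1}_x}$ to the full $\|[a, b, c]\|_{H^K_x}$, I would invoke the conservation laws \eqref{conse2}: the zero-mean combinations $\int a_\pm \, dx = 0$, $\int (a_+ + a_-) v_i \, dx = 0$ (through $b$), and the energy identity allow Poincar\'e's inequality on $\T^3$ to control the low frequencies of $a_+ + a_-$, $b$, and $c$ by their gradients. The outstanding combination $a_+ - a_-$, not directly pinned by conservation, is instead recovered from the Gauss law $\nabla_x \cdot E = a_+ - a_-$, which gives $\|a_+ - a_-\|_{H^{K-1}_x} \lesssim \|E\|_{H^K_x}$; but since we ultimately obtain only $H^{K-1}_x$ dissipation for $E$, this term integrates cleanly into the final statement.

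Third, for the dissipation of $[E, B]$ in $H^{K-1}_x$, I would construct an auxiliary interaction functional from the Maxwell equations \eqref{maxwell1}: pairing $\partial^\alpha E$ with $\partial^\alpha(\nabla_x \times B)$ for $|\alpha| \le K - 2$ and using $\partial_t E - \nabla_x \times B = -G$ together with $\partial_t B + \nabla_x \times E = 0$ produces dissipation of $\|\nabla_x \times B\|_{H^{K-2}_x}$ and $\|\nabla_x \times E\|_{H^{K-2}_x}$, which combined with $\nabla_x \cdot B = 0$ and the Gauss law yields full control of $\|[E, B]\|_{H^{K-1}_x}$ via the Hodge-type identity $\|\nabla_x F\|^2 = \|\nabla_x \cdot F\|^2 + \|\nabla_x \times F\|^2$ on $\T^3$. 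The main obstacle will be the bookkeeping: the VMB structure forces one to lose a spatial derivative on $[E, B]$ relative to $f$, so the interaction functionals must be weighted asymmetrically (with one fewer $x$-derivative on the electromagnetic part), and all cross-terms between fluid variables and $[E, B]$ must cancel or be absorbed at the correct regularity level without ever demanding $\|E\|_{H^K_x}$. Finally, the linearized statement \eqref{macro3} follows by the same procedure after discarding $N_\|$ and the quartic Maxwell terms, since the fluid system for the linearized equation retains the Lorentz forcing $E \cdot v \mu$ as a linear term that is absorbed directly into the macroscopic dissipation.
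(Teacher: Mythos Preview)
Your overall architecture---interaction functionals for $(a_\pm,b,c)$, Poincar\'e on the torus, and Maxwell cross-terms for $[E,B]$---matches the paper's. However, two of your steps do not close as written, and a third point is misattributed.

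\textbf{The $a_+-a_-$ estimate.} You propose to recover $a_+-a_-$ from the Gauss law $\nabla_x\cdot E=a_+-a_-$, obtaining $\|a_+-a_-\|_{H^{K-1}_x}\lesssim\|E\|_{H^K_x}$. But the lemma requires $\|a_+-a_-\|_{H^K_x}$ on the left, and you only produce $\|E\|_{H^{K-1}_x}$ dissipation, so Gauss law in this direction yields at best $\|a_+-a_-\|_{H^{K-2}_x}$, a loss of two orders. The paper instead takes the \emph{difference} of the momentum equations \eqref{abc2} over $\pm$, which reads $\nabla_x(a_+-a_-)=2E+\cdots$. After applying $\partial^\alpha$ and pairing with $\nabla_x\partial^\alpha(a_+-a_-)$, the $2E$ term becomes $2(\partial^\alpha E,\nabla_x\partial^\alpha(a_+-a_-))=-2\|\partial^\alpha(a_+-a_-)\|^2$ via Gauss law and integration by parts, which moves to the left and gives $\|\nabla_x\partial^\alpha(a_+-a_-)\|^2+2\|\partial^\alpha(a_+-a_-)\|^2$ for $|\alpha|\le K-1$, i.e.\ full $H^K_x$ control of $a_+-a_-$ without ever invoking $\|E\|_{H^K_x}$.

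\textbf{Dissipation for $E$.} Pairing $\partial^\alpha E$ with $\partial^\alpha(\nabla_x\times B)$ and using only Maxwell yields
\[
\partial_t(E,\nabla_x\times B)=\|\nabla_x\times B\|^2-\|\nabla_x\times E\|^2-(G,\nabla_x\times B),
\]
so one of the two curl norms appears with the wrong sign; you cannot extract both from this functional alone. The paper obtains $E$-dissipation from the \emph{kinetic} side, specifically the momentum moment of $f_+-f_-$ (equation \eqref{equationG2}): $\partial_tG+\nabla_x(a_+-a_-)-2E+\nabla_x\cdot\Theta=\cdots$, whose $-2E$ term, after pairing with $E$, produces $2\|E\|^2$ and the interaction functional $(E,G)$. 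The Maxwell cross-term $(\nabla_x\times B,E)$ is then used only for $B$-dissipation, with the $\|\nabla_x\times E\|^2$ on its right side absorbed by the already-established $E$-dissipation at a small parameter.

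\textbf{Source of the quartic terms.} The $\|E\|^4+\|B\|^4$ on the right of \eqref{macro2} do not come from the Lorentz force in $N_\|$. They arise because the energy conservation law \eqref{conse2} forces $\int_{\T^3}c\,dx\sim -(\|E\|^2+\|B\|^2)$, so Poincar\'e for $c$ reads $\|c\|\lesssim\|\nabla_x c\|+\|E\|^2+\|B\|^2$; squaring gives the quartic contribution.
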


\begin{proof}
Here we only prove the cases $K=2$ and $K=3$, since the case $K\ge 4$ case is similar as $K=3$. From $f=Pf+(I-P)f$, we directly have
\begin{equation*}
\begin{aligned}
&(\partial_t a_\pm + \partial_t b \cdot v + \partial_t c (|v|^2-3))\mu+v\cdot(\nabla_x a_\pm +\nabla_x (b \cdot v)+ \nabla_x c (|v|^2-3))\mu\pm E\cdot v\mu
\\
=&-(\partial_t+v\cdot\nabla_x)(\bm I_\pm-\bm P_\pm)f- L_\pm(\bm I_\pm-\bm P_\pm)f+N_\pm(f),
\end{aligned}
\end{equation*}
take the  inner product with $1, v_iv_j, v_i^2, |v|^2, i, j =1, 2, 3, i \neq j$, and integrate in $v\in\mathbb R^3$, then we have
\begin{equation}
\label{uni55}
\begin{aligned}
&\partial_t a_\pm+\nabla_x \cdot  b=(-(\partial_t+v\cdot\nabla_x)(\bm I_\pm-\bm P_\pm)f- L_\pm(\bm I_\pm-\bm P_\pm)f+N_\pm(f), 1)_{L^2_v}
\\
& \partial_{x_j} b_i  +  \partial_{x_i} b_j=(-(\partial_t+v\cdot\nabla_x)(\bm I_\pm-\bm P_\pm)f- L_\pm(\bm I_\pm-\bm P_\pm)f, v_iv_j)_{L^2_v}+(N_\pm(f), v_iv_j)_{L^2_v}:=\gamma_{1ij}+\gamma_{2ij}, \quad i\ne j
\\
&\partial_t a _\pm+2 \partial_t c + 2\partial_{x_i} b_i  +\nabla_x \cdot b=(-(\partial_t+v\cdot\nabla_x)(\bm I_\pm-\bm P_\pm)f- L_\pm(\bm I_\pm-\bm P_\pm)f+N_\pm(f), v_i^2)_{L^2_v}
\\
&3\partial_t a_\pm + 6\partial_t c + 5\nabla_x \cdot  b=(-(\partial_t+v\cdot\nabla_x)(\bm I_\pm-\bm P_\pm)f- L_\pm(\bm I_\pm-\bm P_\pm)f+N_\pm(f), |v|^2)_{L^2_v},
\end{aligned}
\end{equation}
and
\begin{equation}
\label{equationG2}
\partial_tG+\nabla_x(a_+-a_{-})-2E+\nabla_x\cdot\Theta((\bm I-\bm P)f\cdot [1,-1])=((N+Lf)\cdot [1,-1], v)_{L^2_v}
\end{equation}
here $\Theta_{jm}(f_\pm)=(f_{\pm}, (v_jv_m-1))_{L^2_v}$. Minus $\eqref{uni55}_3$ by $\eqref{uni55}_1$, we have
\begin{equation}
\label{abc0}
\partial_t  c+  \partial_{x_i} b_i=(-(\partial_t+v\cdot\nabla_x)(I-P)f- L_\pm(\bm I_\pm-\bm P_\pm)f, \frac{ |v_i|^2-1}  {  2  })_{L^2_v}+(N_\pm(f), \frac{ |v_i|^2-1}  {  2  })_{L^2_v}:=\gamma_{1i}+\gamma_{2i},
\end{equation}
take the vector inner product with $v, v|v|^2$ and integrate in $v\in\mathbb R^3$, then we have
\begin{equation}
\label{uni5new}
\begin{aligned}
&\partial_t b+ \nabla_x a_\pm \mp E +2\nabla_x c=(-(\partial_t+v\cdot\nabla_x)(\bm I_\pm-\bm P_\pm)f- L_\pm(\bm I_\pm-\bm P_\pm)f+N_\pm(f), v)_{L^2_v}
\\
&5\partial_t b+ 5\nabla_x a_\pm \mp  5E +20\nabla_x c= (-(\partial_t+v\cdot\nabla_x)(\bm I_\pm-\bm P_\pm)f- L_\pm(\bm I_\pm-\bm P_\pm)f+N_\pm(f), v|v|^2)_{L^2_v},
\end{aligned}
\end{equation}
so we deduce that
\begin{equation}\label{abc1}
\nabla_x c=  (-(\partial_t+v\cdot\nabla_x)(\bm I_\pm-\bm P_\pm)f- L_\pm(\bm I_\pm-\bm P_\pm)f+N_\pm(f), \frac{v(|v|^2-5)}{10})_{L^2_v},
\end{equation}
\begin{equation}\label{abc2}
\nabla_x a_\pm=- \partial_t b\pm  E  + (-(\partial_t+v\cdot\nabla_x)(\bm I_\pm-\bm P_\pm)f- L_\pm(\bm I_\pm-\bm P_\pm)f+N_\pm(f), \frac{v(10-|v|^2)}{5})_{L^2_v}.
\end{equation}
here we use the fact that
$\int_{\mathbb R^3}{|v|^2\mu dv}=3, \int_{\mathbb R^3}{|v|^4\mu dv}=15, \int_{\mathbb R^3}{|v|^6\mu dv}=105$. 
so
\begin{equation}
\label{uni8}
\begin{aligned}
&\partial_t a_\pm+\nabla_x  \cdot b=(N_\pm(f),1)_{L^2_v}
\\
&\partial_t b + \nabla_x a_\pm+2\nabla_x c\mp E=-(v\cdot\nabla_x(\bm I_\pm-\bm P_\pm)f, v)_{L^2_v}+(N_\pm(f), v)_{L^2_v}
\\
&\partial_t c+\frac{1}{3}\nabla_x \cdot  b=-\frac{1}{6}(v\cdot\nabla_x(\bm I_\pm-\bm P_\pm)f,|v|^2)_{L^2_v}+\left( N_\pm(f),\frac{|v|^2-3}{6} \right)_{L^2_v},
\end{aligned}
\end{equation}
we define 
\[
\xi_{a}=\frac{v_i(10-|v|^2)}{5}, \quad  \xi_{bi}=\frac{2v_i^2-5}{2}, \quad \xi_{c}= \frac{v(|v|^2-5)}{10},
\]
for convenience. It is easily seen that
\[
|(v \cdot \nabla_x (I-P)f,  \xi_{a})| +|(v \cdot \nabla_x (I-P)f,  \xi_{bi})| +|(v \cdot \nabla_x (I-P)f,  v_iv_j)| +|(v \cdot \nabla_x (I-P)f,  \xi_{c})| \le \Vert  \nabla_x (I-P) f\Vert_{L^2_6},
\]
and since $L = Q(\mu, f) +Q(f, \mu) $. For both Boltzmann case and Landau case,  by Lemma \ref{L55} we have
\[
|(Q(\mu, f), \xi_{a})| \le  \Vert \mu \Vert_{L^2_{10}} \Vert f \Vert_{L^2_{10}} \Vert \xi_{a}\Vert_{H^{\kappa}_{\iota}} \le C\Vert f \Vert_{L^2_{10}} , \quad  |(Q(f, \mu), \xi_{a})| \le  \Vert f \Vert_{L^2_{10}} \Vert \mu \Vert_{L^2_{10}} \Vert \xi_{a}\Vert_{H^{\kappa}_{\iota}} \le C\Vert f \Vert_{L^2_{10}} ,
\]
here $\kappa=2s, \iota=-7$ Boltzmann case, and $\kappa=2, \iota=-5$ for Landau case. Similarly we have
\begin{equation}
\label{ineqi}
|( Lf,\xi_{a})_{L^2_v}|+ |( Lf,\xi_{bi})_{L^2_v}| + |( Lf, v_iv_j)_{L^2_v}| + |( Lf,\xi_{c})_{L^2_v}| \lesssim \Vert f \Vert_{L^2_{10}}.
\end{equation}
Let $|\alpha|\le 2$. Take $\partial^\alpha $ derivative on both side of \eqref{abc1}, multiply  by $\nabla_x\partial^\alpha c$ and integrate in $x$, we have 
\begin{equation*}
\begin{aligned}
{\|}\nabla_x\partial^\alpha c{\|}^2&=-\int_{\mathbb T^3}{( (\partial_t+v\cdot\nabla_x)(\bm I_\pm-\bm P_\pm)\partial^\alpha f, \xi_{c})_{L^2_v}\cdot\nabla_x\partial^\alpha cdx}
\\
&-\int_{\mathbb T^3}{(L_\pm(\bm I_\pm-\bm P_\pm)\partial^\alpha f,\xi_{c})_{L^2_v}\cdot \nabla_x\partial^\alpha cdx}+\int_{\mathbb T^3}{( \partial^\alpha N_\pm(f)  ,\xi_{c})_{L^2_v}\cdot \nabla_x\partial^\alpha c  dx}
\\
&\le  - \int_{\mathbb T^3}{(\partial_t(\bm I_\pm-\bm P_\pm)\partial^\alpha f, \xi_{c})_{L^2_v}\cdot\nabla_x\partial^\alpha cdx}
\\
& + C  {\|}\nabla_x\partial^\alpha c{\|}\cdot({\|}\nabla_x\partial^{\alpha}(I-P)f{\|}_{L^2_xL^2_{10}}+{\|}\partial^{\alpha}(I-P)f{\|}_{L^2_xL^2_{10}}+{\|}\partial^{\alpha}N_{{\|}}{\|}_{L^2_x } )
\\
&= - \frac{d}{dt}\int_{\mathbb T^3}{((\bm I_\pm-\bm P_\pm)\partial^\alpha f, \xi_{c})_{L^2_v}\cdot\nabla_x\partial^\alpha cdx}-\int_{\mathbb T^3}{( (\bm I_\pm-\bm P_\pm)\nabla_x\partial^\alpha f, \xi_{c})_{L^2_v}\cdot\partial_t\partial^\alpha cdx}
\\
&+C{\|}\nabla_x\partial^\alpha c{\|}\cdot({\|}\nabla_x\partial^{\alpha}(\bm I_\pm-\bm P_\pm)f{\|}_{L^2_xL^2_{10}}+{\|}\partial^{\alpha}(\bm I_\pm-\bm P_\pm)f{\|}_{L^2_xL^2_{10}}+{\|}\partial^{\alpha}N_{{\|}}{\|}_{L^2_x} ),
\end{aligned}
\end{equation*}
from \eqref{uni8},
\[
\partial_t\partial^\alpha c=\frac{1}{3}\nabla_x\partial^{\alpha}b-\frac{1}{6}( v\cdot\nabla_x(\bm I_\pm-\bm P_\pm)\partial^{\alpha}f,|v|^2)_{L^2_v}+\left( \partial^{\alpha}N_\pm(f),\frac{|v|^2-3}{6}\right)_{L^2_v},
\]
so from Cauchy-Schwartz inequality, for any $\epsilon>0$ small we have
\[
-\int_{\mathbb T^3}{( (\bm I_\pm-\bm P_\pm)\nabla_x\partial^\alpha f, \xi_{ci})_{L^2_v}\cdot\partial_t\partial^\alpha cdx}\le \frac{\varepsilon}{2}{\|}\nabla_x\partial^\alpha b{\|}^2+\frac{1}{\varepsilon}{\|}\nabla_x\partial^\alpha(\bm I_\pm-\bm P_\pm)f{\|}^2_{L^2_xL^2_{10}}+C\varepsilon{\|}\partial^\alpha N_{{\|}}{\|}_{L^2_{x}}^2,
\]
and we deduce that 
\begin{equation}\label{uni11}
\begin{aligned}
{\|}\nabla_x\partial^\alpha c{\|}^2&\le -\frac{d}{dt}\int_{\mathbb T^3}{((\bm I_\pm-\bm P_\pm)\partial^\alpha f, \xi_{ci})_{L^2_v}\cdot\nabla_x\partial^\alpha cdx}+ \varepsilon{\|}\nabla_x\partial^\alpha b{\|}^2+ \varepsilon{\|}\nabla_x\partial^\alpha c{\|}^2\\
&+\frac{C}{\varepsilon}({\|}\nabla_x\partial^{\alpha}(\bm I_\pm-\bm P_\pm)f{\|}_{L^2_xL^2_{10 }} ^2+{\|}\partial^{\alpha}(\bm I_\pm-\bm P_\pm)f{\|}^2_{L^2_xL^2_{10}}+{\|}\partial^{\alpha}N_{{\|}}{\|}_{L^2_x} ^2)
\end{aligned}
\end{equation}
we next compute $\nabla_x\partial^{\alpha}b$. For fixed $i$, we use \eqref{uni55},\eqref{abc0} to compute
\begin{equation}
\label{uni14}
\begin{aligned}
\Delta_x  b_i&=\sum_{j\ne i}\partial_{x_j x_j}  b_i+\partial_{x_ix_i} b_i
\\
&=\sum_{j\ne i}\left(-\partial_{x_ix_j} b_j+\partial_{x_j}( \gamma_{1ij}+ \gamma_{2ij})\right)+\partial_{x_i} (\gamma_{1i}+\gamma_{2i})-\partial_t\partial_{x_i} c
\\
&=\sum_{j\ne i}\left(\partial_t\partial_{x_i} c-\partial_{x_i} ( \gamma_{1j}+ \gamma_{2j})\right)-\partial_t\partial_{x_i} c+\sum_{j\ne i}\partial_{x_j}  ( \gamma_{1j}+ \gamma_{2j})+\partial_{x_i} (\gamma_{1i}+\gamma_{2i})
\\
&=\partial_t\partial_{x_i}  c+\sum_{j\ne i}\left(\partial_{x_j}   ( \gamma_{1j}+ \gamma_{2j})-\partial_{x_i}(\gamma_{1j}+\gamma_{2j})\right)+\partial_{x_i}(\gamma_{1i}+\gamma_{2i})
\\
&=\sum_{j\ne i}\left(\partial_{x_j} ( \gamma_{1j}+ \gamma_{2j})-\partial_{x_i} (\gamma_{1j}+\gamma_{2j})  \right)-\partial_{x_ix_i}   b_i+2\partial_{x_i}(\gamma_{1i}+\gamma_{2i}),
\end{aligned}
\end{equation}
we can rewrite the linear terms as $\gamma_{1i}, \gamma_{1j}, \gamma_{1ij}$ as
\[
\sum_{j\ne i}\left(\partial_{x_j}\gamma_{1ij}-\partial_{x_i} \gamma_{1j}  \right)+2\partial_{x_i}\gamma_{1i}=\sum\partial_j\left(-((\partial_t+v\cdot\nabla_x)(\bm I_\pm-\bm P_\pm)f,\xi_{ij})_{L^2_v}-(L_\pm(\bm I_\pm-\bm P_\pm)f,\xi_{ij})_{L^2_v}\right),
\]
here $\xi_{ij}$ are linear combinations of the basis $\{1, v, |v|^2\}$. Similarly as \eqref{ineqi}, we have
$|(Lf, \xi_{ij})_{L^2_v}|\lesssim {\|}f{\|}_{L^2_{10}}$. Take $\partial^\alpha$ on both side of \eqref{uni14}, multiply  by $\partial^\alpha b_i$ and integrate in $x$, we deduce that
\begin{equation*}
\begin{aligned}
{\|}\nabla_x\partial^\alpha b{\|}^2&=\sum_{i, j}    \left(-\int_{\mathbb T^3}{\sum_{j\ne i}\left(\partial_{x_j}\partial^\alpha(\gamma_{1ij}+\gamma_{2ij})-\partial_{x_i}\partial^\alpha(\gamma_{1j}+\gamma_{2j})\right)\cdot\partial^\alpha b_idx}\right)\\
&- \sum_{i, j}\left(\int_{\mathbb T^3}{|\partial_{x_i}\partial^\alpha b_{i}|^2dx}-2\int_{\mathbb T^3}{\partial_{x_i}\partial^\alpha( \gamma_{1i}+ \gamma_{2i})\cdot\partial^\alpha b_idx}\right)
\\
&\le \sum_{i, j} \int_{\mathbb T^3}{(\partial_t(\bm I_\pm-\bm P_\pm)\partial_{x_j}\partial^\alpha f, v_iv_j)_{L^2_v}\cdot\partial^\alpha b_i dx}
\\
&+C{\|}\nabla_x\partial^\alpha b{\|}\cdot({\|}\nabla_x\partial^{\alpha}(\bm I_\pm-\bm P_\pm)f{\|}_{L^2_xL^2_{10}}+{\|}\partial^{\alpha}(I-P)f{\|}_{L^2_xL^2_{10}}+{\|}\partial^{\alpha}N_{{\|}}{\|}_{L^2_x})
\\
&=\sum_{i.j}\frac{d}{dt}\int_{\mathbb T^3}{( (\bm I_\pm-\bm P_\pm)\partial_{x_j}\partial^\alpha f, v_iv_j)_{L^2_v}\cdot\partial^\alpha b_idx}-\int_{\mathbb T^3}{( (\bm I_\pm-\bm P_\pm)\partial_{x_j}\partial^\alpha f, v_iv_j)_{L^2_v}\cdot\partial_t\partial^\alpha b_idx}
\\
&+C{\|}\nabla_x\partial^\alpha b{\|}\cdot({\|}\nabla_x\partial^{\alpha}(\bm I_\pm-\bm P_\pm)f{\|}_{L^2_xL^2_{10 }}+{\|}\partial^{\alpha}(\bm I_\pm-\bm P_\pm)f{\|}_{L^2_xL^2_{10 }}+{\|}\partial^{\alpha}N_{{\|}}{\|}_{L^2_x}),
\end{aligned}
\end{equation*}
from \eqref{uni8},
\[
\partial_t\partial^\alpha b_i=-\nabla_{x_i}\partial^{\alpha}a-2\nabla_{x_i}\partial^{\alpha}c-\nabla_{x_i}\partial^{\alpha}\phi-(v\cdot\nabla_x(\bm I_\pm-\bm P_\pm)\partial^{\alpha}f ,v_i)_{L^2_v}+(\partial^{\alpha}N_\pm(f),v_i)_{L^2_v},
\]
so from Cauchy-Schwartz inequality,
\begin{equation*}
\begin{aligned}
-\int_{\mathbb T^3}{( (\bm I_\pm-\bm P_\pm)\partial_{x_j}\nabla_x\partial^\alpha f, v_iv_j)_{L^2_v}\cdot\partial_t\partial^\alpha b_idx}&\le \frac{\varepsilon}{2}{\|}\nabla_x\partial^\alpha a+2\nabla_x\partial^\alpha c{\|}^2+\frac{\varepsilon}{2}{\|}\nabla_x\partial^{\alpha}\phi{\|}^2
\\
&+\frac{1}{\varepsilon}{\|}\nabla_x\partial^\alpha(\bm I_\pm-\bm P_\pm)f{\|}^2_{L^2_xL^2_{10}}+\frac{\varepsilon}{2}{\|}\partial^\alpha N_{{\|}}{\|}_{L^2_x}^2,
\end{aligned}
\end{equation*}
and we deduce that
\begin{equation}\label{uni15}
\begin{aligned}
{\|}\nabla_x\partial^\alpha b{\|}^2&\le- \sum_{i,j}\frac{d}{dt}\int_{\mathbb T^3}{((\bm I_\pm-\bm P_\pm)  \partial_{x_j}  \partial^\alpha f, v_iv_j)_{L^2_v}\cdot\partial_{x_j}\partial^\alpha b_idx}+ \varepsilon{\|}\nabla_x\partial^\alpha b{\|}^2+\varepsilon{\|}\nabla_x \partial^{\alpha}\phi{\|}^2\\
&+ \varepsilon{\|}\nabla_x\partial^\alpha a+2\nabla_x\partial^\alpha c{\|}^2+\frac{C}{\varepsilon}({\|}\nabla_x\partial^{\alpha}(\bm I_\pm-\bm P_\pm)f{\|}_{L^2_xL^2_{10}}^2+{\|}\partial^{\alpha}(\bm I_\pm-\bm P_\pm)f{\|}^2_{L^2_xL^2_{10}}+{\|}\partial^{\alpha}N_{{\|}}{\|}_{L^2_x}^2),
\end{aligned}
\end{equation}
the final step is to compute $\nabla_x\partial^{\alpha}a$. Take the addition and difference of \eqref{abc2} over $\pm$,  we have 
\begin{equation}
\label{aplus1}
\nabla_xa_++\nabla_xa_{-}=-2\partial_tb+\sum_\pm (-(\partial_t+v\cdot\nabla_x)(\bm I_\pm-\bm P_\pm)f- L_\pm(\bm I_\pm-\bm P_\pm)f+N_\pm(f), \xi_a)_{L^2_v}
\end{equation}
and
\begin{equation}
\label{aplus2}
\nabla_xa_+-\nabla_xa_{-}=2E+\sum_\pm\pm (-(\partial_t+v\cdot\nabla_x)(\bm I_\pm-\bm P_\pm)f- L_\pm(\bm I_\pm-\bm P_\pm)f+N_\pm(f), \xi_a)_{L^2_v}
\end{equation}
apply  $\partial^\alpha$ derivative on both side of \eqref{aplus1} and take inner product with $\nabla_x\partial^\alpha(a_++a_{-})$, we deduce that
\begin{equation}
\label{aplus3}
\begin{aligned}
&{\|}\nabla_x\partial^\alpha(a_++a_{-}){\|}^2=-\int{\partial_t(\partial^\alpha b+((\bm I_\pm-\bm P_\pm)\partial^\alpha f,\xi_a)_{L^2_v})\cdot\nabla_x\partial^\alpha(a_++a_{-})dx}\\
&-\int{(v\cdot\nabla_x(\bm I_\pm-\bm P_\pm)\partial^\alpha f- L_\pm(\bm I_\pm-\bm P_\pm)\partial^\alpha f+\partial^\alpha N_\pm(f), \xi_a)_{L^2_v}\cdot\nabla_x\partial^\alpha(a_++a_{-})dx}\\
\end{aligned}
\end{equation}
after integration by parts about $\partial_t$ and $\nabla_x$, and applying $\eqref{uni55}_1$ and Cauchy-Schwartz inequality, we have
\begin{equation}
\label{aplus4}
\begin{aligned}
&{\|}\nabla_x\partial^\alpha(a_++a_{-}){\|}^2+\frac d{dt}\sum_\pm\int{(\partial^\alpha b+((\bm I_\pm-\bm P_\pm)\partial^\alpha f,\xi_a)_{L^2_v})\cdot\nabla_x\partial^\alpha(a_++a_{-})dx}\\
&\lesssim\frac C\varepsilon({\|}\partial^\alpha(\bm I_\pm-\bm P_\pm)f{\|}_{H^1_xL^2_{10}}+{\|}\partial^\alpha N_{{\|}}{\|}^2_{L^2_x})+\varepsilon{\|}\nabla_x\partial^\alpha b{\|}^2_{L^2_x}
\end{aligned}
\end{equation}
next, from integration by parts,
\[
(\partial^\alpha E, \nabla_x\partial^\alpha(a_+-a_{-}))_{L^2_x}=-(\nabla_x\partial^\alpha E, \partial^\alpha(a_+-a_{-}))_{L^2_x}={\|}\partial^\alpha(a_+-a_{-}){\|}^2_{L^2_x}
\]
apply $\partial^\alpha$ derivative on both side of \eqref{aplus2} and take inner product with $\nabla_x\partial^\alpha(a_+-a_{-})$, we deduce that
\begin{equation}
\label{aplus5}
\begin{aligned}
&{\|}\nabla_x\partial^\alpha(a_++a_{-}){\|}^2+2{\|}\partial^\alpha(a_+-a_{-}){\|}^2_{L^2_x}+\frac d{dt}\sum_\mp\int{(((\bm I_\pm-\bm P_\pm)\partial^\alpha f,\xi_a)_{L^2_v})\cdot\nabla_x\partial^\alpha(a_++a_{-})dx}\\
&\lesssim\frac C\varepsilon({\|}\partial^\alpha(\bm I_\pm-\bm P_\pm)f{\|}_{H^1_xL^2_{10}}+{\|}\partial^\alpha N_{{\|}}{\|}^2_{L^2_x})+\varepsilon{\|}\nabla_x\partial^\alpha b{\|}^2_{L^2_x}
\end{aligned}
\end{equation}

We next calculate $E$. Remind \eqref{equationG2} and $Lf=L(\bm I-\bm P)f$, we have
\begin{equation*}
\begin{aligned}
2{\|}E{\|}^2_{L^2_x}&=(E, \partial_tG+\nabla_x(a_+-a_{-})+\nabla_x\cdot\Theta((\bm I-\bm P)f\cdot [1,-1]))_{L^2_x}-(E, ((N+Lf)\cdot[1,-1], v)_{L^2_v})_{L^2_x}
\\
&=\partial_t(E,G)_{L^2_x}-(\nabla_x\times B, G)_{L^2_x}+{\|}G{\|}^2_{L^2_x}+(E, \nabla_x(a_+-a_{-}))_{L^2_x}\\&+(E,\nabla_x\cdot\Theta((\bm I-\bm P)f\cdot [1,-1]))_{L^2_x}-(E, ((N+L(\bm I-\bm P)f)\cdot[1,-1], v)_{L^2_v})_{L^2_x}
\end{aligned}
\end{equation*}
so 
\begin{equation}
\label{equationE1}
\begin{aligned}
-\partial_t(E,G)_{L^2_x}+\lambda{\|}E{\|}^2_{L^2_x}&\lesssim \varepsilon{\|}\nabla_x\times B{\|}^2_{L^2_x}+\frac C\varepsilon{\|}(\bm I-\bm P)f{\|}^2_{H^1_xL^2_D}+{\|}\nabla_x(a_+-a_{-}){\|}^2_{L^2_x}+{\|}(N,\xi)_{L^2_v}{\|}^2_{L^2_x}
\end{aligned}
\end{equation}
for the first derivative of $E$, from integration by parts in $x$, we have
\[
(\nabla_x(\nabla_x\times B), \nabla_xG)_{L^2_x}=(\nabla_x\times B, \nabla_x\cdot\nabla_xG)_{L^2_x}\lesssim {\|}\nabla_x\times B{\|}_{L^2_x}{\|}(\bm I-\bm P)f{\|}_{H^2_xL^2_D}
\]
so
\begin{equation}
\label{equationE2}
\begin{aligned}
2{\|}E{\|}^2_{L^2_x}&=(\nabla_xE, \partial_t\nabla_xG+\nabla^2_x(a_{+}-a_{-})+\nabla_x\nabla_x\cdot\Theta((\bm I-\bm P)f\cdot[1,-1])-( (\nabla_x N+L\nabla_xf)\cdot [1,-1],v)_{L^2_v})_{L^2_x}\\
&=\partial_t(\nabla_xE,\nabla_xG)_{L^2_x}-(\nabla_x(\nabla_x\times B), \nabla_xG)_{L^2_x}+{\|}\nabla_x G{\|}^2_{L^2_x}+(\nabla_xE, \nabla_x^2(a_+-a_{-}))_{L^2_x}\\
&+(\nabla_xE, \nabla_x^2\cdot\Theta((\bm I-\bm P)f\cdot[1,-1]))_{L^2_x}-(\nabla_xE,  (\nabla_x N+L\nabla_xf)\cdot [1,-1],v)_{L^2_v})_{L^2_x}
\end{aligned}
\end{equation}
and we obtain that for any $\eta>0$, there exists $C_\eta>0$ that satisfies
\begin{equation}
\label{equationE3}
-\partial_t(\nabla_xE, \nabla_xG)+\lambda{\|}\nabla_xE{\|}^2_{L^2_x}\lesssim \varepsilon{\|}\nabla_x\times B{\|}^2_{L^2_x}+\frac C\varepsilon{\|}(\bm I-\bm P)f{\|}^2_{H^2_xL^2_D}+{\|}a_{+}-a_{-}{\|}^2_{H^2_x}+{\|}(N,\xi)_{L^2_v}{\|}^2_{H^1_x}
\end{equation}
for the second derivatives of $E$, we have
\begin{equation}
\label{equationE4}
\begin{aligned}
2{\|}\nabla_x^2E{\|}^2_{L^2_x}&=(\nabla^2_xE, \partial_t\nabla^2_xG+\nabla^3_x(a_+-a_{-})+\nabla^2_x\nabla_x\cdot\Theta((\bm I-\bm P)f\cdot[1,-1])
\\
&-((\nabla^2_xN+L\nabla_xf)\cdot [1,-1], v)_{L^2_v})_{L^2_x}
\end{aligned}
\end{equation}
from integration by parts, we have
\[
|(\nabla^2_x(\nabla_x\times B),\nabla^2_x G)_{L^2_v}|\lesssim {\|}\nabla^2_x B{\|}_{L^2_x}{\|}\nabla^3_x(\bm I-\bm P)f{\|}_{L^2_xL^2_D}
\]
so
\begin{equation}
\label{equationE5}
\begin{aligned}
-\partial_t(\nabla^2_xE,\nabla^2_xG)_{L^2_x}+\lambda{\|}\nabla^2_xE{\|}^2_{L^2_x}\lesssim \varepsilon{\|}\nabla^2_x B{\|}^2_{L^2_x}+\frac C
\varepsilon{\|}(\bm I-\bm P)f{\|}^2_{H^3_xL^2_D}+{\|}a_{+}-a_{-}{\|}^2_{H^3_x}+{\|}(N,\xi)_{L^2_v}{\|}^2_{H^3_x}
\end{aligned}
\end{equation}

Now let us focus on $B$. Notice that ${\|}\nabla_x\times B{\|}={\|}\nabla_x B{\|}$ because $B$ is divergence free. From integrating by parts, we have
\begin{equation}
\label{equationeb}
(\nabla_x\times B, E)=(B, \nabla_x\times E),\quad (\partial_t (\nabla_x\times B), E)=(\partial_t B, \nabla_x\times E)
\end{equation}
so we have
\begin{equation*} 
\begin{aligned}
{\|}\nabla_x\times B{\|}^2&=(\nabla_x\times B, \partial_tE+G)=\partial_t (\nabla_x\times B, E)-(\partial_t B,\nabla_x\times E)+(\nabla_x\times B, G)\\&=\partial_t (\nabla_x\times B, E)+{\|}\nabla_x\times E{\|}^2+(\nabla_x\times B, G)\\
\end{aligned}
\end{equation*}
this implies
\begin{equation}
\label{EB1}
-\partial_t (\nabla_x\times B, E)+\lambda {\|}\nabla_x\times B{\|}^2\lesssim {\|}\nabla_x\times E{\|}^2+{\|}(\bm I-\bm P)f{\|}^2_{L^2_xL^2_D}
\end{equation}
next, for the second deriavative on $B$, we have
\begin{equation*}
\begin{aligned}
\Vert \nabla_x^2  B\Vert^2 &= \Vert \nabla_x \times (\nabla_x \times B)\Vert^2 = (\nabla_x \times( \nabla_x \times B), \nabla_x \times(\partial_t E +G)) 
\\
 &= \partial_t(\nabla_x \times (\nabla_x \times B),\nabla_x \times E) - (\nabla_x\times\partial_t B), \nabla_x\times(\nabla_x \times E))  +(\nabla_x \times (\nabla_x \times B), \nabla_x \times G)
\\&
=\partial_t(\nabla_x \times (\nabla_x \times B), \nabla_x \times E)  +{\|}\nabla_x\times(\nabla_x \times E){\|}^2+(\nabla_x \times (\nabla_x \times B), \nabla_x \times G)
\end{aligned}
\end{equation*}
and similarly as \eqref{EB1}, we have
\begin{equation}
\label{EB2}
-\partial_t(\nabla_x \times (\nabla_x \times B), \nabla_x \times E)+\lambda \Vert \nabla_x^2  B\Vert^2 \lesssim {\|}\nabla^2_x E{\|}^2+ {\|}(\bm I-\bm P)f{\|}^2_{H^1_xL^2_D}
\end{equation}
moreover,  we obtain from Poincar\'e inequality that
\begin{equation}
\label{equationE6}
{\|}B{\|}_{L^2_x}\lesssim {\|}\nabla_xB{\|}_{L^2_x}\lesssim{\|}\nabla_x\times B{\|}_{L^2_x}
\end{equation}
so after taking $\eta, \kappa>0$ small enough, we take linear combination $ \eqref{equationE1}+\eqref{equationE3}+\kappa\times\eqref{EB1}$ and use \eqref{equationE6} to obtain
\begin{equation}
\label{equationE7}
\begin{aligned}
&-\partial_t(E,G)_{L^2_x}-\partial_t(\nabla_xE,\nabla_xG)_{L^2_x}-\kappa\partial_t(\nabla_x\times B, E)_{L^2_x}+\lambda {\|}(E,B){\|}^2_{H^1_x}\\&
\lesssim{\|}(\bm I-\bm P)f{\|}^2_{H^2_xL^2_D}+{\|}a_+-a_{-}{\|}^2_{H^2_x}+{\|}(N, \xi)_{L^2_v}{\|}^2_{H^1_x}
\end{aligned}
\end{equation}
moreover, take linear $\eqref{equationE1}+ \eqref{equationE3}+\eqref{equationE5}+\kappa\times\eqref{EB1}+\kappa\times\eqref{EB2}$ and apply \eqref{equationE6}, we have a refined esimate with the second derivative terms of $(E, B)$ as follows
\begin{equation}
\label{equationE8}
\begin{aligned}
&-\partial_t(E,G)_{L^2_x}-\partial_t(\nabla_xE,\nabla_xG)_{L^2_x}-\partial_t(\nabla^2_xE,\nabla^2_xG)_{L^2_x}\\
&-\kappa\partial_t(\nabla_x\times B, E)_{L^2_x}-\partial_t(\nabla_x\times(\nabla_x\times B), \nabla_x\times E)_{L^2_x}+\lambda {\|}(E,B){\|}^2_{H^2_x}\\&
\lesssim{\|}(\bm I-\bm P)f{\|}^2_{H^3_xL^2_D}+{\|}a_+-a_{-}{\|}^2_{H^3_x}+{\|}(N, \xi)_{L^2_v}{\|}^2_{H^2_x}
\end{aligned}
\end{equation}

Now we come to prove \eqref{macro2}. Note that $|a_++a_{-}|^2+|a_+-a_{-}|^2=2|a_+|^2+2|a_{-}|^2$. Recall the Poincar\'e inequality we have
\begin{equation}
\label{apoincare}
\Vert  a\Vert \lesssim \Vert  \nabla_x a \Vert,\quad \Vert  b\Vert \lesssim \Vert  \nabla_x b \Vert, \quad \Vert  c\Vert \lesssim \Vert  \nabla_x c \Vert + \int_{\T^3} c dx   \lesssim  \Vert  \nabla_x c \Vert +  \Vert  E\Vert^2+ \Vert  B\Vert^2,\quad 
\Vert  B\Vert  \lesssim \Vert  \nabla_x  B\Vert
\end{equation}
for $K=2$, we define the energy $E_{2,int}$ as
\begin{equation*}
\begin{aligned}
&\sum_{\pm}\sum_{|\alpha|\le 2}(\int {((\bm I_\pm-\bm P_\pm)\partial^\alpha f,\xi_c)_{L^2_v}\cdot \nabla_x\partial^\alpha cdx}+\sum_{i,j}\int {((\bm I_\pm-\bm P_\pm)\partial^\alpha f,\xi_{ij})_{L^2_v}\cdot \partial_{x_j}\partial^\alpha b_idx}\\
&+\lambda\int{ (\partial^\alpha b+((I-P)\partial^\alpha f,\xi_a)_{L^2_v})\cdot\nabla_x\partial^\alpha(a_++a_{-})dx})\mp \lambda\int{ (((I-P)\partial^\alpha f,\xi_a)_{L^2_v})\cdot\nabla_x\partial^\alpha(a_+-a_{-})dx})\\
&-\kappa (E,G)_{L^2_x}-\kappa (\nabla_xE,\nabla_xG)_{L^2_x}-\kappa^2 (\nabla_x\times B, E)_{L^2_x}
\end{aligned}
\end{equation*}
for $K=3$, we define the energy $E_{3,int}$ as
\begin{equation*}
\begin{aligned}
&\sum_{\pm}\sum_{|\alpha|\le 3}(\int {((\bm I_\pm-\bm P_\pm)\partial^\alpha f,\xi_c)_{L^2_v}\cdot \nabla_x\partial^\alpha cdx}+\sum_{i,j}\int {((\bm I_\pm-\bm P_\pm)\partial^\alpha f,\xi_{ij})_{L^2_v}\cdot \partial_{x_j}\partial^\alpha b_idx}\\
&+\lambda\int{ (\partial^\alpha b+((I-P)\partial^\alpha f,\xi_a)_{L^2_v})\cdot\nabla_x\partial^\alpha(a_++a_{-})dx})\mp \lambda\int{ (((I-P)\partial^\alpha f,\xi_a)_{L^2_v})\cdot\nabla_x\partial^\alpha(a_+-a_{-})dx})\\
&-\kappa (E,G)_{L^2_x}-\kappa (\nabla_xE,\nabla_xG)_{L^2_x}-\kappa (\nabla_x^2E,\nabla_x^2G)_{L^2_x}-\kappa^2 (\nabla_x\times B, E)_{L^2_x}-\kappa^2(\nabla_x\times (\nabla_x\times B), \nabla_x\times E)_{L^2_x}
\end{aligned}
\end{equation*}
so for $K=2$, we choose $\varepsilon>0$ small enough, take linear combination $\eqref{uni11}+\eqref{uni15}+\eqref{aplus4}+\eqref{aplus5}+\kappa\times\eqref{equationE7}$ and \eqref{apoincare} to obtain
\begin{equation*}
\partial_tE_{2,int}(t)+\lambda{\|}a_{\pm},b,c{\|}^2_{H^2_x}+\lambda {\|}E, B{\|}^2_{H^1_x}\lesssim {\|}(\bm I-\bm P)f{\|}^2_{H^2_xL^2_D}+{\|}(g,\xi)_{L^2_v}{\|}^2_{H^2_x}+{\|}E, B{\|}^4_{L^2_x}
\end{equation*}
for $K=3$, we choose $\varepsilon>0$ small enough, take linear combination $\eqref{uni11}+\eqref{uni15}+\eqref{aplus4}+\eqref{aplus5}+\kappa\times\eqref{equationE8}$ and \eqref{apoincare} to obtain
\begin{equation*}
\partial_tE_{3,int}(t)+\lambda{\|}a_{\pm},b,c{\|}^2_{H^3_x}+\lambda {\|}E, B{\|}^2_{H^2_x}\lesssim {\|}(\bm I-\bm P)f{\|}^2_{H^3_xL^2_D}+{\|}(g,\xi)_{L^2_v}{\|}^2_{H^3_x}+{\|}E, B{\|}^4_{L^2_x}
\end{equation*}
and the lemma is proved.
\end{proof}
\section{Semigroup of the linearized operator}
\label{sec4}
In this Section, we will derive the estimate on the semigroup generated by the linearized operator $L_1$ given by \eqref{L1}. 
To take into account both the particle distribution and the electromagnetic field, we consider the equation 
\begin{equation}
\label{equationg0}
\partial_t\bm{g} =  L_1\bm{g}, 
\end{equation}
where we have denoted 
\begin{align}
\label{equationg11}
\begin{aligned}
\bm{g}&= (f_+,f_-, E, B),\\
L_1 (\bm{g}) &= \Big(-v\cdot \nabla_x f_{+} + E \cdot v \mu +L_+ f,\,-v\cdot \nabla_x f_{-} - E \cdot v \mu +L_- f,\\&\qquad\quad\nabla_x \times B -\int_{\R^3} v (f_+-f_{-}) dv ,\, -\nabla_x \times E\Big), 
\end{aligned}
\end{align}
and denote by $\g=S_{L_1}(t)\bm{ g}_0$ the solution of the equation
\begin{equation}
\label{equationg2}
\partial_t \bm{ g} =L_1 \bm{g}, \quad \bm{g}(0)=\bm{ g}_0, \quad \nabla_x \cdot \g_E = \int_{\R^3} (f_+-f_{-})(v) dv   , \quad  \nabla_x \cdot \g_B =0.
\end{equation}
In the following, we'll use $\bm{ g}_{f}, \bm{ g}_E, \bm{ g}_B$ to denote the variables of $\bm g$, and simply write $[f, E, B]=[\bm{ g}_{f}, \bm{ g}_E, \bm{ g}_B]$ in the same line accordingly when there's no confusion. Recall also that we use the norm $\|\g\|^2_{H^k_xH^m_k}=\|f\|^2_{H^k_xH^m_k}+\|[E,B]\|^2_{H^k_x}$. 
For the estimate of semigroup $S_{L_1}(t)$ generated by $L_1$, we prove the following lemma.

\begin{lem}
\label{lemsemigroup}
Let $k\ge k_0$ and $\al\ge 0$. Then for any suitable function $\g$, we have large-time behavior:
\begin{align}\label{semidecay}
    \Vert \pa^\al S_{L_1}(t)\bm g\Vert_{H^2_xL^2_k}   &\lesssim (1+t)^{-1/2} \Vert\pa^\al\bm g \Vert_{H^3_xL^2_{4}},  
\end{align}
and we have a stronger result about integrability and regularizing behavior:
\begin{equation}
\label{spectralgapl1}
\left ( \int_0^\infty \Vert\pa^\al S_{L_1}(t)\bm g  \Vert_{H^2_xL^2_k}^2 dt \right)^{\frac 1 2}   
\lesssim  \min\Big\{\|\pa^\al\g\|_{H^3_xL^2_{k+10}},\,\Vert\pa^\al \bm g \Vert_{H^3_xH^{-s}_{k-\gamma/2}}\Big\}.
\end{equation}
moreover, for the $f$ coordinate, we have a better estimate:
\begin{equation}\label{equationl10}
\left (  \int_0^\infty \Vert\pa^\al( S_{L_1} (t)\bm g)_f  \Vert_{H^2_xL^2_k}^2 dt \right)^{\frac 1 2}  \lesssim\Vert \pa^\al\bm g \Vert_{H^2_xH^{-s}_{k-\gamma/2}}. 
\end{equation}
\end{lem}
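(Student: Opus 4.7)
The plan is to extract the three bounds from a single linear energy--dissipation inequality
\begin{equation*}
\frac{d}{dt}\mathcal E(t)+\lambda\,\mathcal D(t)\le 0
\end{equation*}
for solutions of \eqref{equationg2}, coupled with weight interpolation and an $H^{-s}$--duality argument in the spirit of \cite{CDL}. To build $\mathcal E$ and $\mathcal D$, I first apply $\partial^\alpha$ to \eqref{equationg2} and test the $f$-component against $\partial^\alpha f\,w^2(\alpha,0)$ in $L^2_{x,v}$. The transport term is antisymmetric; the linearized collision $L_\pm$ is coercive via \eqref{esL} for the Boltzmann case and Lemma \ref{L58}(1) for the Landau case; and the linear Vlasov--Maxwell coupling $\mp E\cdot v\mu$ is handled by Lemma \ref{L211}(2), whose output $\partial_t\|\partial^\alpha[E,B]\|^2$ is merged into $\mathcal E$, while the Maxwell system contributes the $\|\partial^\alpha[E,B]\|$--dissipation by the integration-by-parts identities used in the proof of that lemma. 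A small multiple of the macroscopic functional $E_{3,\mathrm{int}}$ from Lemma \ref{L420} is then added to recover dissipation on $[a_\pm,b,c]$ and on $[E,B]$. This produces $\mathcal E(t)\sim\|\partial^\alpha\bm g(t)\|_{H^2_xL^2_k}^2$ and $\mathcal D(t)\sim\|\partial^\alpha\bm g_f(t)\|_{H^2_xH^s_{k+\gamma/2}}^2+\|\partial^\alpha[E,B](t)\|_{H^1_x}^2$, at the cost of one extra $x$-derivative of data (hence the $H^3_x$ on the right-hand side).

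For the decay \eqref{semidecay} I would use the weight interpolation
\begin{equation*}
\|f\|_{L^2_k}^2\lesssim\|f\|_{H^s_{k+\gamma/2}}^{2\theta}\,\|f\|_{L^2_{k+k_1}}^{2(1-\theta)}
\end{equation*}
with $\theta=1/2$ and $k_1$ large (this is why $k\ge k_0$ with $k_0$ large is required). The higher-weight factor is propagated by running the same energy estimate at weight $k+k_1$, using that only powers of $\langle v\rangle$ up to a fixed amount are needed on the data side after absorption into $w(\alpha,\beta)$. Plugging the interpolation back into $\mathcal E'\le-\lambda\mathcal D$ yields $\mathcal E'\le-C\mathcal E^{1/\theta}$, which integrates to the stated $(1+t)^{-1/2}$ rate.

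The first bound in \eqref{spectralgapl1} follows by integrating the energy inequality on $[0,\infty)$, giving $\int_0^\infty\mathcal D\,dt\lesssim\mathcal E(0)$, and then repeating the interpolation step under the integral with $L^2_{k+10}$ as the high-weight factor. For the second bound and for \eqref{equationl10}, the key tool is the duality inequality
\begin{equation*}
\|f\|_{L^2_k}^2\le\|f\|_{H^s_{k+\gamma/2}}\,\|f\|_{H^{-s}_{k-\gamma/2}},
\end{equation*}
together with a dual energy estimate proving $\sup_t\|\bm g(t)\|_{H^{-s}_{k-\gamma/2}}\lesssim\|\bm g_0\|_{H^{-s}_{k-\gamma/2}}$. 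Cauchy--Schwarz in time then yields
\begin{equation*}
\int_0^\infty\|f\|_{L^2_k}^2\,dt\le\Bigl(\sup_t\|f\|_{H^{-s}_{k-\gamma/2}}\Bigr)\int_0^\infty\|f\|_{H^s_{k+\gamma/2}}\,dt,
\end{equation*}
and the bracketed quantities are controlled, respectively, by the dual bound on initial data and by $\mathcal E(0)$. The improved estimate \eqref{equationl10} saves one $x$-derivative for the $f$-component because the Vlasov--Maxwell source $E\cdot v\mu$ can be handled by Lemma \ref{L211}(1) alone, without invoking the macroscopic trade with $[E,B]$ that costs an $x$-derivative in Lemma \ref{L420}.

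The hardest step will be the dual coercivity in $H^{-s}_{k-\gamma/2}$: one must control the commutators of $\langle D_v\rangle^{-s}\langle v\rangle^{k-\gamma/2}$ with both $v\cdot\nabla_x$ and $L_\pm$, and handle the Maxwell coupling in this negative-regularity topology. This is exactly where the hypothesis $s\ge 1/2$ becomes essential, as flagged in the introduction: the Vlasov--Maxwell term $(E+v\times B)\cdot\nabla_v f$, paired against an $H^{-s}$-type test, produces $\|E\cdot f\|_{H^{1-s}_{12}}$, which must be majorized by $\|E\|_{H^2_x}\|f\|_{H^s_{12}}$ via a product estimate, forcing $1-s\le s$.
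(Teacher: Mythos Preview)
Your direct energy--dissipation approach misses the central obstruction that the paper's splitting is designed to overcome. In polynomial weight the linearized collision estimate \eqref{esL} reads
\[
\sum_\pm(L_\pm f,f_\pm\langle v\rangle^{2k})\le -\lambda\|f\|_{H^s_{k+\gamma/2}}^2+C_k\|f\|_{L^2_v}^2,
\]
and for soft potentials ($\gamma<0$) the remainder $C_k\|f\|_{L^2_v}^2$ cannot be absorbed by the dissipation: the macroscopic functional from Lemma~\ref{L420} only controls $\mathbf{P}f$, while the $(\mathbf{I}-\mathbf{P})$ part of $\|f\|_{L^2_v}^2$ is genuinely stronger than $\|(\mathbf{I}-\mathbf{P})f\|_{H^s_{\gamma/2}}^2$ when $\gamma<0$. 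This is precisely why the paper introduces the auxiliary operators $\mathcal A$ (with the artificial damping $-M\chi_R$ on $f$ and $-E,-B$ on the fields) and $\mathcal B$, obtains the clean coercivity \eqref{48} for $\mathcal A$, does the duality argument \eqref{intSA}--\eqref{intSA1} at the level of $S_{\mathcal A}$, and then transfers everything to $S_{\mathcal B}$ and $S_{L_1}$ by two Duhamel iterations \eqref{Duha}. The $\mu^{-1/2}$-weighted estimates for $S_{\mathcal B}$ and $S_{L_1}$ (where the bad remainder is absent) serve as the bridge. Without this enlargement machinery your $\mathcal E'+\lambda\mathcal D\le 0$ simply does not hold in $L^2_k$.

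Your $H^{-s}$ argument also does not close. Propagating $\sup_t\|\bm g(t)\|_{H^{-s}_{k-\gamma/2}}$ would require an energy estimate in a negative-regularity norm, and the commutator $[\langle D_v\rangle^{-s},v\cdot\nabla_x]$ is not bounded in that topology. Moreover, even granting the sup bound, your Cauchy--Schwarz step
\[
\int_0^\infty\|f\|_{L^2_k}^2\,dt\le\Bigl(\sup_t\|f\|_{H^{-s}_{k-\gamma/2}}\Bigr)\int_0^\infty\|f\|_{H^s_{k+\gamma/2}}\,dt
\]
needs the $L^1_t$ norm of $\|f\|_{H^s_{k+\gamma/2}}$, whereas the energy inequality only gives the $L^2_t$ norm; the gap is not recoverable without decay, which is what you are trying to prove. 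The paper instead dualizes at the level of the coercive operator $\mathcal A$: it bounds $\int_0^\infty\|S_{\mathcal A^*_l}(t)\varphi\|_{H^s_{\gamma/2}}^2\,dt$ by $\|\varphi\|_{L^2}^2$ using the dissipation integral, and then pairs $S_{\mathcal A}g$ against test functions to extract the $H^{-s}_{k-\gamma/2}$ norm of $g$ directly, without ever propagating an $H^{-s}$ norm along the flow.
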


\begin{proof}
Note that the operators $\partial_x$ and $L_1$ are commutative, so we only consider the case $\alpha=0$. The proof below also works if $H^2_x$ is replaced by $H^{k+2}_x$ and $H^3_x$ is replaced by $H^{k+3}_x$ for any $k\in\mathbb N$. 
For the function $\bm g$ defined in \eqref{equationg11}, an arbitrary weight function $m$ and the projection $\bm P$ given in \eqref{projection}, we denote $m(\bm I-\bm P) g:= (m(\bm I-\bm P) f_\pm, E, B )$, in other words, the weight function $m$ and the projection $\bm P$ act only on the variable $f$. 
Note also that for polynomial weight, we choose $k\ge k_0$ while for exponential weight, one can choose arbitrary $k\ge 0$ below.

\smallskip 
First, we define linear operators $\mathcal A, \mathcal B$, which split $L_1$. Let $M>0$ be a large constant to be chosen, $\chi\in C^\infty_c(\mathbb R)$ be the smooth truncation function satisfying $1_{[-1,1]} \le  \chi \le 1_{[-2,2]}$ and we denote $\chi_R(\cdot) :=\chi(\cdot/R)$ for $R>0$. Then we define
\begin{align*}
\mathcal A \bm g:&= \big(-v\cdot \nabla_x f_+ +L_+ f  -M \chi_R f_+, 
\,-v\cdot \nabla_x f_- +L_- f  -M \chi_R f_-,\, \nabla_x \times B -E ,\, -\nabla_x \times E -B\big),\\
 K_1 \bm g :&= \big(M\chi_R f_+,M\chi_R f_-, 0, 0\big), 
\end{align*}
and 
\begin{align*}
\mathcal B \bm g:&= \big(-v\cdot \nabla_x f_+ +L_+ f ,\,-v\cdot \nabla_x f_- +L_- f ,\, \nabla_x \times B -E ,\, -\nabla_x \times E -B\big),\\
K_2 \bm g :&=\Big( E \cdot v \mu,\, -E \cdot v \mu,\, -E - \int_{\R^3}v (f_+-f_{-}) dv,\, B\Big), 
\end{align*}
which give $\mathcal A+K_1 =\mathcal B$ and $\mathcal B+K_2= L_1$. Next, we denote by $S_{\mathcal A}(t)g_0$ the solution to the equation
\begin{equation}
\label{SAg0}
\partial_t \bm g =\mathcal A \bm g, \quad\bm g(0)=\bm g_0, \quad \nabla_x \cdot E = \int_{\R^3} (f_+-f_{-})(v) dv   , \quad  \nabla_x \cdot B =0,
\end{equation}
and denote by $S_{\mathcal B}(t)g_0$ the solution to the equation
\begin{equation}
\label{SBg0}
\partial_t\bm g =\mathcal B \bm g, \quad\bm g(0)=\bm g_0, \quad \nabla_x \cdot E = \int_{\R^3} (f_+-f_{-})(v) dv   , \quad  \nabla_x \cdot B =0.
\end{equation}
Thus, from the definition of the semigroup, we have
\begin{equation}
\label{defsemigroup}
\partial_tS_{\mathcal A}(t)\bm g-\mathcal AS_{\mathcal A}(t)\bm g=0,\quad \partial_tS_{\mathcal B}(t)\bm g-\mathcal BS_{\mathcal B}(t)\bm g=0,\quad \partial_tS_{L_1}(t)\bm g-\mathcal L_1S_{L_1}(t)\bm g=0,  
\end{equation}
and the Duhamel's principle implies 
\begin{align}
    \label{Duha}
    S_{\mathcal B}(t)=S_{\mathcal A}(t)+\int^t_0S_{\mathcal B}(t-s)K_1S_{\mathcal A}(s)\,ds,\quad S_{L_1}(t)=S_{\mathcal B}(t)+\int^t_0S_{L_1}(t-s)K_1S_{\mathcal B}(s)\,ds.
\end{align}

\smallskip\noindent{\bf Step 1: Estimate of $S_{\mathcal A}(t)$.}
First, after direct computation by choosing $M>0$ sufficiently large and using \eqref{esL}, there exists $\lambda_0>0$ such that
\begin{align}\label{48}\notag
(\mathcal A\bm g, \bm g)_{L^2_xL^2_k} &= \sum_\pm(L_\pm f-M\chi_R f_\pm, f_\pm)_{L^2_xL^2_k}   + (\nabla_x \times B,E)_{L^2_x} -\Vert E \Vert^2_{L^2_x} -(\nabla_x \times E,B )_{L^2_x}  -\Vert B \Vert^2 _{L^2_x} 
\\
&\le - \lambda \Vert f \Vert_{L^2_xH^s_{k+\gamma/2}}^2 -\Vert E \Vert^2_{L^2_x} -\Vert B \Vert^2_{L^2_x}  \le -\lambda \Vert \bm g \Vert_{L^2_xL^2_{k+\gamma/2}}^2. 
\end{align}
Thus, if $\gamma \ge 0$,  from the $L^2_xL^2_k$ energy estimate of \eqref{defsemigroup}$_1$, 
\begin{equation}
\label{equationa1}
\Vert S_\mathcal A (t)\bm g_0\Vert_{L^2_xL^2_k} \lesssim  e^{-\lambda t}\Vert  \bm g_0\Vert_{L^2_xL^2_k}; 
\end{equation}
if $-3\le\gamma<0$, 
from interpolation ${\|}f{\|}_{L^2_v}\le {\|}f\langle v\rangle^{\gamma/2}{\|}^\frac 23_{L^2_v}{\|}f\langle v\rangle^6{\|}^\frac 13_{L^2_v}$, we have $$(\mathcal A\bm g, \bm g)_{L^2_xL^2_k}\le -\lambda \Vert \bm g\Vert_{L^2_xL^2_{k+6}}^{-1}\Vert \bm g \Vert_{L^2_xL^2_{k}}^3.$$
After solving the ODE of $L^2_xL^2_k$ energy estimate of \eqref{defsemigroup}$_1$, i.e. $$\pa_t\|\g\|_{L^2_xL^2_k}^2\le -2\lam\big(\sup_{0\le s\le t}\|\g\|_{L^2_xL^2_{k+6}}\big)^{-1}\|\g\|_{L^2_xL^2_k}^3,$$ we have
\begin{equation}
\label{equationa2}
\Vert S_\mathcal A (t) \bm g_0\Vert_{L^2_xL^2_k} \lesssim (1+t)^{-2}\sup_{0\le s\le t}\|S_\mathcal A (s)\g_0\|_{L^2_xL^2_{k+6}} \lesssim  (1+t)^{-2}\Vert \bm g_0\Vert_{L^2_xL^2_{k+6}}. 
\end{equation}
Therefore, the integrability can be obtained from \eqref{equationa1} and \eqref{equationa2}: for any $\al\ge 0$, 
\begin{align}\label{inteA}
    \int^t_0\|\pa^\al S_\mathcal A (t) \bm g(s)\|^2_{L^2_xL^2_k}\,ds
    \lesssim \|\pa^\al \bm g(s)\|^2_{L^2_xL^2_{k+6}}. 
\end{align}

\smallskip 
Next, for the regularizing effect, we perform the duality argument of $S_{\mathcal A}(t)$. Consider the dual $\mathcal A^*_{l}$ of $\mathcal A_l=\<v\>^l\mathcal A(\<v\>^{-l}f)$ for any $l\in\R$,
and let $\h =\<v\>^{2k-l}\g$; note that velocity weights here and below are applied to $\g_f$ only. Then $\big(\mathcal A^*_{l}\g,\<v\>^{2k}\g\big)_{L^2_xL^2_v} = 
	\big(\<v\>^l\g,\mathcal A(\<v\>^{2k-l}\g)\big)_{L^2_xL^2_v}=
	\big(\<v\>^{2l-2k}\h,\mathcal A \h\big)_{L^2_xL^2_v}.$
Take the $L^2_xL^2_v$ inner product of $\pa_tS_{\mathcal A^*_{l}}(t)\g-\mathcal A^*_lS_{\mathcal A^*_{l}}(t)\g=0$ with $\<v\>^{2k}S_{\mathcal A^*_{l}}(t)\g$, we have 
\[
\frac{1}{2}\pa_t\|\<v\>^{k}S_{\mathcal A^*_{l}}(t)\g\|^2_{L^2_xL^2_v} + \big(\mathcal A^*_lS_{\mathcal A^*_{l}}(t)\g,\<v\>^{2k}S_{\mathcal A^*_{l}}(t)\g\big)_{L^2_xL^2_v} = 0.
\]
Hence for $l-k\ge k_0$, we have $\frac{1}{2}\pa_t\|\<v\>^{l-k}\h\|^2_{L^2_xL^2_v} + \big(\<v\>^{2l-2k}\h, \mathcal A\h\big)_{L^2_xL^2_v} = 0,$ where $\h=\<v\>^{2k-l}S_{\mathcal A^*_{l}}(t)\g$.
Integrating over $t$ and using \eqref{48}, we have 
\begin{align}\label{intSA}
\lam \int^\infty_0\|\<v\>^{l-k}\h\|_{L^2_xH^s_{\gamma/2}}^2\,dt\le \|\<v\>^{l-k}\h|_{t=0}\|^2_{L^2_xL^2_v}\,\,\Rightarrow\,\,\lam \int^\infty_0\|\<v\>^{k}S_{\mathcal A^*_{l}}(t)\g\|_{L^2_xH^s_{\gamma/2}}^2\,dt\le \|\<v\>^{k}\g\|^2_{L^2_xL^2_v}.
\end{align}
Observe that if  $\pa_t\g=\mathcal A\g$, then $\pa_t \<v\>^l\g=\mathcal A_l\<v\>^l\g$.
Thus, $\<v\>^lS_{\mathcal A}(t)\g=S_{\mathcal A_l}(\<v\>^l\g)$. 
Moreover, by duality, we have $(S_{\mathcal A_l}\g_1,\g_2)_{L^2_xL^2_v} = (\g_1,S_{\mathcal A_l^*}\g_2)_{L^2_xL^2_{v}}$ for any vector-valued functions $\g_1,\g_2$ given in \eqref{equationg11}. Therefore, 
there exists a vector-valued function (given in \eqref{equationg11}) sequence $\{\varphi_n\}$ in Schwartz space such that $\|\varphi_n\|_{L^2_xL^2_{k}}\le 1$, we have from \eqref{intSA} that 
\begin{align}\label{intSA1}\notag
	\int^\infty_0\|\<v\>^{k} S_\mathcal A(t)\partial^\alpha \g\|_{L^2_xL^2_v}^2\,dt
	&= \int^\infty_0\lim_{n\to\infty}\big|\big(\<v\>^{2k}  S_{\mathcal A}(t) \partial^\alpha \g,\varphi_n\big)_{L^2_{x,v}}\big|^2\,dt\notag\\
 &\notag= \liminf_{n\to\infty}\int^\infty_0\big|\big(\<v\>^{2k}\partial^\alpha \g, S_{\mathcal A^*_{2k}}(t)\varphi_n\big)_{L^2_{x,v}}\big|^2\,dt\\
	&\notag\le\liminf_{n\to\infty}\int^\infty_0\|\<v\>^{k}\partial^\alpha \g\|^2_{L^2_xH^{-s}_{-\gamma/2}}\|\<v\>^kS_{\mathcal A^*_{2k}}(t)\varphi_n\|_{L^2_xH^s_{\gamma/2}}^2\,dt\\
 &\lesssim \|\<v\>^{k-\gamma/2}\partial^\alpha \g\|^2_{L^2_xH^{-s}_v}, 
\end{align}
for any $k\ge k_0$. So, noting that $\pa^\al$ commutes with $S_\mathcal A (t)$, we have proved that for any $\al\ge0$, 
\begin{equation}\label{duality}
\int_{0}^\infty \Vert \pa^\al S_\mathcal A (t) \bm g\Vert_{L^2_xL^2_k}^2 dt \lesssim \Vert\pa^\al \bm g \Vert_{L^2_xH^{-s}_{k-\gamma/2}}^2.
\end{equation}

\smallskip\noindent{\bf Step 2: Estimate of $S_{\mathcal B}(t)$.}
Next, we compute $\mathcal B$ and consider linearized equation $\pa_t\g=\mathcal B\g$. The estimates of $\mathcal B$ in the norm $L^2(\langle v\rangle ^k)$ and $L^2(\mu^{-1/2})$ are respectively as follows:
\begin{multline}
\label{equationbf1}
(\mathcal B\bm g, \bm g)_{H^2_xL^2_k}=\sum_\pm(L_\pm f, f_\pm)_{H^2_xL^2_k}  + (\nabla_x \times B,E)_{H^2_x} -\Vert E \Vert^2_{H^2_x} -(\nabla_x \times E,B)_{H^2_x} -\Vert B \Vert^2_{H^2_x} 
\\
\le - \lambda \Vert f \Vert_{H^2_xH^s_{k+\gamma/2}}^2 -\Vert E \Vert^2_{H^2_x}-\Vert B \Vert^2_{H^2_x}  + C \Vert f \Vert_{H^2_x L^2_v}^2 \le -\lambda \Vert \bm g \Vert_{H^2_xH^s_{k+\gamma/2}}^2+C \Vert f \Vert_{H^2_x L^2_v}^2, 
\end{multline}

and
\begin{align}
\label{equationbf2}\notag
(\mathcal B\bm g, \bm g)_{H^2_xL^2_v(\mu^{-1/2})} &= \sum_\pm(L_\pm f, f_\pm)_{H^2_xL^2_v(\mu^{-1/2})}  + (\nabla_x \times B,E)_{H^2_x} -\Vert E \Vert^2_{H^2_x} -(\nabla_x \times E,B)_{H^2_x}-\Vert B \Vert^2_{H^2_x} 
\\
&
\le -\lambda \Vert \mu^{-\frac{1}{2}}  (\bm I-\bm P \mu^{\frac{1}{2}})\bm g \Vert_{H^2_xH^s_{\gamma/2}}^2,  
\end{align}
where we used \eqref{esL} and \eqref{esLmu}. 
For the linear equation 
$\pa_t\g+\mathcal{B}\g=0$, the conservation law is 
\[
\int_{\T^3} \int_{ \R^3}  f_+ ( v) dv dx =0, \,\,\int_{\T^3} \int_{ \R^3}  f_{-} ( v) dv dx  =0,\]\[\int_{\T^3} \int_{ \R^3} v (f_++f_{-})(v) dv dx = 0,\,\, \int_{\T^3} \int_{ \R^3} |v|^2 (f_++f_{-})(v) dv dx=0. 
\]
Here the fourth equation does not contain $E$ and $B$, which is different from \eqref{conse2}.  Using the same arguments that lead to \eqref{macro3}, for $f$ as the solution of $\partial_t\g=\mathcal B\g$, there exists a functional $E_{K, int}(t)\lesssim \|f\|_{H^K_xL^2_v}$ such that 
\[
\partial _tE_{K, int}(t) + \lambda \|[a, b, c]\|^2_{H^K_x} 
\lesssim \|(\bm I-\bm P) f\|^2_{H^K_xL^2_{10}}, 
\]
where $[a,b,c]$ are given by \eqref{projection}. 
Together with 
\eqref{equationbf2}, we deduce that there exists a functional $E_{int, \mathcal B} (t)  \sim\Vert \mu^{-1/2} S_\mathcal B(t)\bm g\Vert_{H^2_xL^2_v}^2$ such that
\[
\partial_t E_{int, \mathcal B} (t)   + \lambda \Vert   \mu^{-1/2} S_\mathcal B(t)\bm g \Vert_{H^2_xH^s_{\gamma/2}}^2 \le 0. 
\]
Similar to the analysis of $\mathcal A$ in \eqref{equationa1} and \eqref{equationa2}, we obtain that
\begin{equation}
\label{equationa3}
\Vert  \mu^{-1/2}  S_\mathcal B(t) \bm g \Vert_{H^2_xL^2_v}\lesssim  e^{-\lambda t}\Vert  \mu^{-1/2}   \bm g \Vert_{H^2_xL^2_v}\,\,\text{for}\,\, \gamma\ge 0,
\end{equation}
\begin{equation}
\label{equationa4}
\Vert  \mu^{-1/2}  S_\mathcal B(t) \bm g \Vert_{H^2_xL^2_v}\lesssim  (1+t)^{-2}\Vert  \mu^{-1/2}   \bm g \Vert_{H^2_xL^2_6}\,\,\text{for}\,\,-3\le\gamma<0.
\end{equation} 
For operator $K_1$, we easily see that for any $\alpha\in\mathbb N$ ($M>0$ is fixed now),
\begin{equation}\label{equationk1}
\Vert \mu^{-1} K_1 f  \Vert_{H^\alpha} \lesssim \Vert  f  \Vert_{H^\alpha}.
\end{equation}
Thus, from Duhamel's principle \eqref{Duha}, we could compute that
\begin{align}
\label{420}\notag
\Vert S_\mathcal B(t) \bm g \Vert_{H^2_xL^2_k}   &\lesssim \Vert S_\mathcal A(t) \bm g \Vert_{H^2_xL^2_k}    + \int_{0}^t \Vert  S_\mathcal B(t-s) K_1 S_\mathcal A(s) \bm g \Vert_{H^2_xL^2_k}\,ds
\\
&\notag\lesssim  (1+t)^{-2} \Vert\bm g \Vert_{H^2_xL^2_{k+6} }    + \int_{0}^t \Vert  \mu^{-1/2} S_\mathcal B(t-s) K_1 S_\mathcal A(s)\bm  g \Vert_{H^2_xL^2_v}\,ds
\\
&\notag\lesssim  (1+t)^{-2} \Vert\bm g \Vert_{H^2_xL^2_{k+6} }    + \int_{0}^t (1+(t-s))^{-2}  \Vert \mu^{-1/2}  K_1 S_\mathcal A(s) \bm g \Vert_{H^2_xL^2_{6}}\,ds
\\
&\notag\lesssim  (1+t)^{-2} \Vert\bm g \Vert_{H^2_xL^2_{k+6} }    + \int_{0}^t (1+(t-s))^{-2}  \Vert S_\mathcal A(s)\bm  g \Vert_{H^2_xL^2_v}\,ds
\\
&\lesssim  (1+t)^{-2} \Vert\bm g \Vert_{H^2_xL^2_{k+6} }.    
\end{align}
Next, we give a more refined estimate of $S_\mathcal B(t)\bm g$. Notice from \eqref{Duha} that
\begin{align*}
 \left (\int_0^\infty \Vert S_ \mathcal B(t)\g  \Vert_{H^2_xL^2_k}^2 dt  \right)^{\frac 1 2} \lesssim \left ( \int_0^\infty \| S_\mathcal A(t)\g  \Vert_{H^2_xL^2_k}^2 dt \right)^{\frac 1 2} +  \left ( \int_0^\infty \Big\Vert \int_0^t   S_\mathcal B(s)   K_1 S_\mathcal A(t-s) \g ds \Big\Vert_{H^2_xL^2_k}^2 dt\right)^{\frac 1 2} . 
\end{align*}
For the second term, using Minkowski's inequality twice we easily compute that 
\begin{align*}
 \left (\int_0^\infty \Big\Vert \int_0^t   S_\mathcal B(s)   K_1 S_\mathcal A(t-s) \bm g\,ds \Big\Vert_{H^2_xL^2_k}^2 dt\right)^{\frac 1 2} 
\lesssim & \left ( \int_0^\infty  \left|  \int_0^t  \Vert  S_\mathcal B(s)   K_1 S_\mathcal A(t-s)\bm  g \Vert_{H^2_xL^2_k}\,ds \right|^2 dt \right)^{\frac 1 2} 
\\
\lesssim & \int_0^\infty   \left ( \int_s^\infty  \Vert S_\mathcal B(s)   K_1 S_ \mathcal A(t-s) \bm g \Vert_{H^2_xL^2_k}^2    dt \right)^{\frac 1 2}\,ds
\\
(\text{because of \eqref{equationa3}, \eqref{equationa4}})\lesssim & \int_0^\infty   \left ( \int_0^\infty  (1+s)^{-4} \Vert  K_1 S_\mathcal A(t) \bm g \Vert_{H^2_xL^2_{6}}^2  dt \right)^{\frac 1 2}\,ds
\\
(\text{because of \eqref{equationk1}})\lesssim &   \left ( \int_0^\infty  \Vert  S_\mathcal A(t) \bm g \Vert_{H^2_xL^2_v}^2   dt \right)^{\frac 1 2} ,
\end{align*}
Thus, using \eqref{inteA} or \eqref{duality} to control $S_{\mathcal A}$, we have
\begin{align}\label{420a}
\left ( \int_0^\infty \Vert S_{\mathcal B}(t)\bm g  \Vert_{H^2_xL^2_k}^2 dt \right)^{\frac 1 2}  \lesssim \left ( \int_0^\infty \Vert S_\mathcal A(t)\bm g  \Vert_{H^2_xL^2_k}^2 dt \right)^{\frac 1 2}  \lesssim  \min\Big\{\|\bm g\|_{H^2_xL^2_{k+6}},\,\Vert\bm g \Vert_{H^2_xH^{-s}_{k-\gamma/2}}\Big\}. 
\end{align}
Moreover, we need a regularizing estimate of $(S_{\mathcal B}(t)\textbf g)_f$. Notice that $f=(S_{\mathcal B}(t)\textbf g)_f$ is the solution of the equation  $\partial_t f_\pm =-v\cdot \nabla_x f_\pm +L_\pm f$, while the terms $E,B$ do not appear. Therefore there is no dissipation in $x$ variable, $f$ depends only on its initial data, and hence,  
by the duality argument, we have
\begin{align}\label{420b}
\Vert (S_{\mathcal B}(t)\bm g)_{f}\Vert_{H^2_xL^2_5}
\lesssim  t^{-1/2 }(1+t)^{-2} \Vert \bm g_{f}\Vert_{H^2_x H^{-s}_{10}}.
\end{align}

\smallskip\noindent{\bf Step 3: Estimate of $S_{L_1}(t)$.}
Finally, we come back to $L_1$ and consider the linearized equation $\partial_t \bm g =L_1 \bm g$ by letting $\g=S_{L_1}(t)\g_0$ with $\g=(f_+,f_-,E,B)$. Noticing that
\begin{equation}
\label{equationsl1}
\frac 12\partial_t {\|}[E,B]{\|}^2_{L^2_x}=-\int_{\mathbb T^3} \int_{\mathbb R^3}E\cdot v(f_+-f_{-})dv dx,
\end{equation}
and using the energy estimate in \cite{GS}, there exists $\lambda_0>0$, such that
\begin{equation}
\label{equationsl2}
\frac 12\partial_t\sum_\pm{\|}f_\pm{\|}^2_{L^2(\mu^{-1/2})}\le -\lambda _0{\|}\mu^{-1/2}(\textbf I-\textbf P\mu^{1/2})f_\pm{\|}^2_{H^s_{\gamma/2}}+\int_{\mathbb T^3} \int_{\mathbb R^3}E\cdot v(f_+-f_{-})dv dx.
\end{equation}
Thus, taking $H^2_xL^2_v$ inner product of $\partial_t \bm g =L_1 \bm g$ with $\mu^{-1}S_{L_1}(t)\g$ over $\mathbb T^3\times \mathbb R^3$ and recalling \eqref{equationsl1}, \eqref{equationsl2}, we have
\begin{equation}
\label{equationsl3}
\partial_t{\|}\mu^{-\frac 12}f{\|}^2_{H^2_xL^2_v}+\partial_t{\|}[E,B]{\|}^2_{H^2_x}+\lambda_0{\|}\mu^{-\frac 12}(\textbf I-\textbf P\mu^\frac 12)f{\|}^2_{H^2_xH^s_{\gamma/2}}\le 0. 
\end{equation}
For the weighted version of \eqref{equationsl3}, taking $H^2_xL^2_v$ inner product with $\langle v\rangle^k\mu^{-1}S_{L_1}(t)\g$ over $\mathbb T^3\times \mathbb R^3$, we have
\begin{equation}
\label{equationsl4}
\partial_t{\|}\mu^{-\frac 12}f{\|}^2_{H^2_xL^2_k}+\partial_t{\|}[E,B]{\|}^2_{H^2_x}+\lambda_0{\|}\mu^{-\frac 12}f{\|}^2_{H^2_xH^s_{k+\gamma/2}}\le C{\|}\mu^{-\frac 12}f{\|}^2_{H^2_xL^2_{\gamma/2}}.
\end{equation}
Recalling the macroscopic estimate \eqref{macro2}, for equation $\partial_t \bm g =L_1 \bm g$, there exists functional $E_{int, L_1}\lesssim {\|}\mu^{-\frac 12}f{\|}^2_{H^2_xL^2_v}$ such that
\begin{equation}
\label{equationsl5}
\begin{aligned}
&\partial_tE_{int,L_1}(t)+\partial_t{\|}[E,B]{\|}^2_{H^2_x}+\lambda_0{\|}\mu^{-\frac 12}\textbf P\mu^\frac 12f{\|}^2_{H^2_xH^s_{\gamma/2}}+\lambda_0{\|}[E,B]{\|}^2_{H^1_x}\\&\quad\le C{\|}\mu^{-\frac 12}(\textbf I-\textbf P\mu^\frac 12)S_{L_1}(f){\|}^2_{H^2_xL^2_{\gamma/2}}+{\|}[E,B]{\|}^4_{L^2_x}.
\end{aligned}
\end{equation}
Now take linear combinations \eqref{equationsl3}+$\kappa^2\times$\eqref{equationsl4}+$\kappa\times$\eqref{equationsl5} and assume the a priori assumption \[{\|}\mu^{-\frac 12}f{\|}^2_{H^2_xL^2_{k+6}}\le\varepsilon, \quad {\|}[E,B]{\|}^2_{L^2_x}\le\varepsilon.\]
Then we obtain that there exists a functional $\mathcal E_{L_1}(t)\sim {\|}\mu^{-\frac 12}f{\|}^2_{H^2_xL^2_k}+{\|}[E,B]{\|}^2_{H^2_x}$, such that
\begin{equation}
\label{equationslf1}
\partial_t\mathcal E_{L_1}(t)+\lambda_0{\|}\langle v\rangle^k\mu^{-\frac 12}f{\|}^2_{H^2_xH^s_{\gamma/2}}+\lambda_0{\|}[E,B]{\|}^2_{H^1_x}\le 0, 
\end{equation}
which implies 
\begin{align}\label{430}
    \sup_{0\le s\le t}\mathcal E_{L_1}(s)
    +\lambda_0\int^t_0{\|}\langle v\rangle^k\mu^{-\frac 12}f(s){\|}^2_{H^2_xH^s_{\gamma/2}}\,ds+\lambda_0\int^t_0{\|}[E,B](s){\|}^2_{H^1_x}\,ds\le 2\mathcal E_{L_1}(0). 
\end{align}
To obtain the large-time and regularizing behavior, we first examine the energy estimate \eqref{equationslf1} in a manner analogous to $S_{\mathcal A}(t)$, but only with respect to the $f$ coordinate. 
Regardless of the sign of $\gamma$, integrating \eqref{equationslf1} over $[0,t]$, we have 
\begin{align*}
    \|\mu^{-\frac 12}f(t)\|^2_{H^2_xL^2_k}
    \le \|\g_0\|_{H^2_xL^2_k}^2 -\lambda_0\big(\sup_{0\le s\le t}\|f\|_{L^2_xL^2_{k+6}}\big)^{-1}\int^t_0\|\mu^{-\frac 12}f\|^3_{H^2_xL^2_{k}}\,ds. 
\end{align*}

For operator $K_2$ we can calculate that for any $\alpha\in\mathbb N$,
\begin{align}\label{estimatek2}\notag
\Vert \mu^{-1/2} K_2 \bm g \Vert_{H^\alpha_xL^2_{10}}^2&= \Big\Vert \Big(E \cdot v \mu^{1/2},\, -E \cdot v \mu^{1/2},\,  E - \int_{\R^3}v (f_+-f_{-}) dv, B\Big) \Big\Vert_{H^\alpha_xL^2_{10}}^2 \\
&\lesssim \Vert f \Vert_{H^\alpha_xL^2_4}^2   + \Vert B  \Vert^2_{H^\alpha} +  \Vert E  \Vert^2_{H^\alpha}  = \Vert  \bm g \Vert_{H^\alpha_xL^2_4}^2. 
\end{align}

Thus, by Duhamel's principle, 
\begin{multline}
\label{equationl8}
\left (  \int_0^\infty \Vert( S_{L_1} (t)\bm g)_f  \Vert_{H^2_xL^2_k}^2 dt \right)^{\frac 1 2}  \lesssim \left (  \int_0^\infty \Vert (S_{\mathcal B}(t)\bm g )_f \Vert_{H^2_xL^2_k}^2 dt\right)^{\frac 1 2} \\  + \left (  \int_0^\infty \Big\Vert \int_0^t   (S_{L_1} (s)   K_2 S_{\mathcal B}(t-s)  \bm g)_f ds \Big\Vert_{H^2_xL^2_k}^2 dt \right)^{\frac 1 2}. 
\end{multline}
For the second term, by Minkowski's inequality and \eqref{430}, we compute 
\begin{align}
\label{equationl9bb}\notag
\left ( \int_0^\infty \Big\Vert \int_0^t   (S_{L_1}(t-s)   K_2 S_{\mathcal B}(s) \bm g )_f ds \Big\Vert_{H^2_xL^2_k}^2 dt \right)^{\frac 1 2} 
\lesssim &\notag \int_0^\infty  \left (  \int_s^\infty   \Vert ( S_{L_1}(t-s)  K_2 S_{\mathcal B}(s)\bm  g)_f \Vert_{H^2_xL^2_k}^2   dt \right)^{\frac 1 2}\,ds
\\
\lesssim &\notag\int_0^\infty   \left (  \int_0^\infty  \Vert ( S_{L_1}(t)  K_2S_{\mathcal B}(s)\bm  g)_f \Vert_{H^2_xL^2_k}^2  dt \right)^{\frac 1 2}\,ds  
\\
\lesssim &\notag  \int_0^\infty  \Vert  K_2 S_{\mathcal B}(t)\bm  g \Vert_{H^2_xL^2_v (\mu^{-1/2})}\,dt 
\\
\lesssim &    \int_0^\infty  \Vert  S_{\mathcal B}(t)\bm  g \Vert_{H^2_xL^2_4}\,dt. 
\end{align}
Gathering \eqref{420b} and \eqref{equationl9bb}, we have
\begin{equation*}
\left (  \int_0^\infty \Vert( S_{L_1} (t)\bm g)_f  \Vert_{H^2_xL^2_k}^2 dt \right)^{\frac 1 2}  \lesssim \left (  \int_0^\infty \Vert (S_\mathcal B(t)\bm g )_f \Vert_{H^2_xL^2_k}^2 dt\right)^{\frac 1 2}   +\int_0^\infty  \Vert  S_B(t) \bm g \Vert_{H^2_xL^2_4}\,dt  \lesssim\Vert \bm g \Vert_{H^2_xH^{-s}_{k-\gamma/2}}. 
\end{equation*}
This implies \eqref{equationl10}. Next we come to estimate $S_{L_1}\bm g$ including  coordinates $(E,B)$.

Due to the dissipations of $E$ and $B$ in the $x$ variable,
the computations of $[E,B]=(S_{L_1}(g)\bm g)_{E.B}$ differ. Initially, for any $K\in\mathbb N^+$, employing the Gagliardo-Nirenberg interpolation inequality,
\[
{\|}[E,B]{\|}_{H^{K}_x}\lesssim {\|}[E,B]{\|}^\frac 12_{H^{K-1}_x}{\|}[E,B]{\|}^\frac 12_{H^{K+1}_x}, 
\]
which implies that ${\|}[E,B]{\|}_{H^1_x}\gtrsim {\|}[E,B]{\|}^{-1}_{H^3_x} {\|}[E,B]{\|}^2_{H^2_x}$. Next, for $f=(S_{L_1}(g)\bm g)_f$, if $\gamma\ge 0$, we directly have \[{\|}\mu^{-\frac 12}f{\|}_{H^2_xH^s_{k+\gamma/2}} \gtrsim {\|}\mu^{-\frac 12}f{\|}_{H^2_xL^2_k};\] 
while for $-3\le\gamma<0$, we obtain from interpolation that 
\[
{\|}f{\|}_{L^2_v}\lesssim {\|}f\langle v \rangle^{\gamma/2}{\|}_{L^2_v}^{\frac 12} {\|}f\langle v \rangle^{4}{\|}^{\frac 12}_{L^2_v},
\]
and thus ${\|}\mu^{-\frac 12}f{\|}_{H^2_xH^s_{k+\gamma/2}} \gtrsim {\|}\mu^{-\frac 12}f{\|}^{-1}_{H^2_xL^2_{k+4}}{\|}\mu^{-\frac 12}f{\|}^2_{H^2_xL^2_k}$. So there exists $\lambda_1>0$ such that
\[
\partial_t\mathcal E_{L_1}(t)+\lambda_0({\|}\mu^{-\frac 12}f{\|}^2_{H^2_xL^2_{k+4}}+{\|}[E,B]{\|}^2_{H^3_x})^{-1}\mathcal E_{L_1}(t)^2\le 0.
\]
Solving this ODE and using \eqref{430} to control the sup-norm, we obtain that 
\begin{align}
\label{equationsl6}\notag
{\|}\mu^{-\frac 12}f(t){\|}^2_{H^2_xL^2_k}+{\|}{[E,B]}{\|}^2_{H^2_x}&\lesssim (1+t)^{-1}\sup_{0\le s\le t}({\|}\mu^{-\frac 12}f(s){\|}^2_{H^2_xL^2_{k+4}}+{\|}[E,B](s){\|}^2_{H^3_x})\\
&\lesssim (1+t)^{-1}({\|}\mu^{-\frac 12}f(0){\|}^2_{H^2_xL^2_{k+4}}+{\|}[E,B](0){\|}^2_{H^3_x}). 
\end{align}
Finally, combining \eqref{420}, \eqref{estimatek2} and \eqref{equationsl6}, 
we have large-time behavior:
\begin{align}\label{438}\notag
\Vert S_{L_1}(t)\bm g\Vert_{H^2_xL^2_k}   &\lesssim \Vert S_{\mathcal B}(t)\bm g \Vert_{H^2_xL^2_k}    + \int_{0}^t \Vert  S_{L_1}(t-s) K_2 S_{\mathcal B}(s)\bm  g_{E,B} \Vert_{H^2_xL^2_k}\,ds
\\
&\notag\lesssim  (1+t)^{-2} \Vert \bm g\Vert_{H^2_xL^2_k}   + \int_{0}^t  (1+(t-s))^{-1/2}\Vert  \mu^{-1/2} K_2 S_{\mathcal B}(s) \bm g \Vert_{H^3_xL^2_{4}}\,ds
\\
&\notag\lesssim  (1+t)^{-2} \Vert \bm g\Vert_{H^2_xL^2_k}    + \int_{0}^t  (1+(t-s))^{-1/2} \Vert S_{\mathcal B}(s)\bm  g \Vert_{H^3_xL^2_{4}}\,ds
\\
&\lesssim  (1+t)^{-1/2} \Vert\bm g \Vert_{H^3_xL^2_{4} }. 
\end{align}
Next, for integrability and regularizing effect, we write 
\begin{multline}
\label{equationl8a}
\left (  \int_0^\infty \Vert S_{L_1} (t)\bm g  \Vert_{H^2_xL^2_k}^2 dt \right)^{\frac 1 2}  \lesssim \left (  \int_0^\infty \Vert S_{\mathcal B}(t)\bm g \Vert_{H^2_xL^2_k}^2 dt\right)^{\frac 1 2} \\  + \left (  \int_0^\infty \Big\Vert \int_0^t S_{L_1} (s)   K_2 S_{\mathcal B}(t-s)  \bm g\, ds \Big\Vert_{H^2_xL^2_k}^2\,dt \right)^{\frac 1 2}. 
\end{multline}
For the first term, we can use \eqref{420a}. 
For the second term, by Minkowski's inequality, \eqref{420}, \eqref{430} and \eqref{estimatek2}, we compute 
\begin{align}
\label{equationl9}\notag
\left ( \int_0^\infty \Big\Vert \int_0^t  S_{L_1}(t-s)   K_2 S_{\mathcal B}(s) \bm g ds \Big\Vert_{H^2_xL^2_k}^2 dt \right)^{\frac 1 2} 
\lesssim &\notag \int_0^\infty  \left (  \int_s^\infty   \Vert S_{L_1}(t-s)  K_2 S_{\mathcal B}(s)\bm  g\Vert_{H^2_xL^2_k}^2   dt \right)^{\frac 1 2}\,ds
\\
\lesssim &\notag\int_0^\infty   \left (  \int_0^\infty  \Vert S_{L_1}(t)  K_2S_{\mathcal B}(s)\bm  g \Vert_{H^3_xL^2_k}^2  dt \right)^{\frac 1 2}\,ds  
\\
\lesssim &\notag  \int_0^\infty  \Vert  K_2 S_{\mathcal B}(t)\bm  g \Vert_{H^3_xL^2_v (\mu^{-1/2})}\,dt 
\\
\lesssim & \int_0^\infty  \Vert  S_{\mathcal B}(t)\bm  g \Vert_{H^3_xL^2_4}\,dt. 
\end{align}
Therefore, using \eqref{420} or \eqref{420b}, we continue \eqref{equationl8a} to deduce 
\begin{align*}
\left ( \int_0^\infty \Vert S_{L_1}(t)\bm g  \Vert_{H^2_xL^2_k}^2 dt \right)^{\frac 1 2}   \lesssim  \min\Big\{\|\g\|_{H^3_xL^2_{k+10}},\,\Vert \bm g \Vert_{H^3_xH^{-s}_{k-\gamma/2}}\Big\}.
\end{align*}
This completes the proof of Lemma \ref{lemsemigroup}. 
\end{proof}

\section{Proof of the main theorem}
\label{sec5}
In this Section, we come to the proof of the main stability theorem.
We first define the time-decreasing weight function
\begin{equation}
\label{weight decreasing}
w(\alpha, \beta)  = \langle v \rangle^{k (1+(1+t)^{-\frac 12} )- a|\alpha|  - b |\beta|}. 
\end{equation}
Then we have
\begin{equation}
\label{maintheorem0}
\begin{aligned}
\frac d {dt} w^2(t, \alpha, \beta)  &= \frac d {dt}  e^{2 (k (1+(1+t)^{-\frac 12} )-  a|\alpha|  - b|\beta| + c)} \log \langle v \rangle \\& = - \frac {k} {(1+t)^\frac 32} \log \langle v \rangle w^2(t, \alpha, \beta)\le -k\log 2(1+t)^{-\frac 32}w^2(t, \alpha, \beta)
\end{aligned}
\end{equation}

Now we are ready to prove our main theorem. 
\begin{proof}[Proof of Theorem \ref{maintheorem}]
We define the lifespan time $T^*$ by
\begin{equation}
\label{lifespan}
T^*:=\sup_{t\ge 0}\big\{E_{6, k} (t)\le  \epsilon, E_{2, k}(t) \le \epsilon (1+t)^{-3},\, \forall t \in [0, T^*)\big\}. 
\end{equation}
The fact that $T^*>0$ follows directly from local existence Theorem \ref{thmlocal}. We next prove that $T^*=\infty$. Notice from \eqref{VMB3} that
\begin{equation}
\label{maintheorem1aa}
\partial_t \partial^\alpha_\beta f_\pm + \partial^\alpha_\beta(v \cdot \nabla_x f_\pm)  \pm \partial^\alpha_\beta ((E + v \times B) \cdot \nabla_v f _\pm)\mp \partial^\alpha_\beta ( E  \cdot v)   \mu =\partial^\alpha_\beta L_\pm f+\partial^\alpha_\beta \Gamma_\pm(f , f). 
\end{equation}
we multiply \eqref{maintheorem1aa} by $w(t,\alpha,\beta)$ and do the standard $L^2$ energy estimate. Remind \eqref{maintheorem0}, we have 
\begin{multline*}
\frac 1 2 \partial_t \Vert \partial^\alpha_\beta f_\pm w(\alpha, \beta) \Vert_{L^2_{x, v}}^2 +k\log 2\Vert \partial^\alpha_\beta f_\pm w(\alpha, \beta) \Vert_{L^2_{x, v}}^2   - ( \partial^\alpha_\beta f_\pm , \partial^\alpha_\beta f_\pm  \partial_t w^2(\alpha, \beta) )_{L^2_{x, v}} +( \partial^\alpha_\beta(v \cdot \nabla_x f_\pm) , f_\pm w^2(\alpha, \beta) )_{L^2_{x, v}} \\ \pm (\partial^\alpha_\beta ((E + v \times B) \cdot \nabla_v f_\pm ), f_\pm w^2(\alpha, \beta) )_{L^2_{x, v}}
\mp( \partial^\alpha_\beta ( E  \cdot v)\mu   , f_\pm w^2(\alpha, \beta) )_{L^2_{x, v}} \\
  = 
  + ( L_\pm\partial^\alpha_\beta f_\pm,  f_\pm w^2(\alpha, \beta) )_{L^2_{x, v}} +(\partial^\alpha_\beta \Gamma_\pm(f , f),  f_\pm w^2(\alpha, \beta) )_{L^2_{x, v}}.  
\end{multline*}
we make $\partial^\alpha $ for \eqref{VMB2} and sum up over $|\alpha|\le 7$, and after summing $\alpha, \beta$ with $|\al|+|\beta|\le 7$, collecting the Lemmas from \ref{L58} to \ref{L28}, there exist constants $\lambda_k, C_k, C> 0$, such that 
\begin{equation}
\label{eqe4}
\begin{aligned}
\partial_t E_{7, k }(t)  + k\log 2 (1+t)^{-\frac 32}  E^f_{7, k} (t)+   \lambda_k D_{7, k}(t)   \le  &\,C_k \Vert [E, B]\Vert_{H^7_x} \Vert f \Vert_{H^7_x}  + \sqrt{E_{2, k}(t) }  E^f_{7, k}(t)
\\
&+\sqrt{E_{6, k} (t) }  \sqrt{E_{7, k} (t)} \sqrt{D_{7, k} (t)} + C \Vert f \Vert_{H^7_{x, v}}^2 , 
\end{aligned}
\end{equation}
where the energy functionals are given in \eqref{functional1} and \eqref{functional2}. It is obvious that 
\begin{equation}
\label{maintheorem2}
||f,E,B||^2_{H^7_{x,v}}\le E_{7,k}(t)
\end{equation}notice that on $[0, T^*]$, we have \begin{equation}
\label{maintheorem3}\sqrt{ E_{2,k}(t)}\le\epsilon (1+t)^{-\frac 32} \le k\log 2 (1+t)^{-\frac 32}\end{equation} and \begin{equation}
\label{maintheorem4} \sqrt{E_{6, k} (t) }  \sqrt{E_{7, k} (t)} \sqrt{D_{7, k} (t)}\le  \lambda_k D_{7, k}(t)+\frac 1{4\lambda_k}E_{6,k}(t)E_{7,k}(t)\le \lambda_k D_{7, k}(t)+  E_{7,k}(t)
\end{equation}
after combining \eqref{eqe4} to \eqref{maintheorem4} together, there exists a constant $C>0$, such that $\partial_t E_{7, k }(f)  \le C E_{7, k }(f) $ for any $t\in [0,T^*]$.
By the \emph{a priori} assumption \eqref{lifespan}, 
for any $t\in [0, T^*)$, we have $E_{7, k }(f)(t)  \le e^{Ct} E_{7, k}(f_0)$. It implies that, if $\g(t)=(f_+(t),f_-(t), E(t), B(t))$ is the solution to \eqref{VMB3} and \eqref{VMB4}, then for any $|\alpha| \le 6$, we have
\begin{align}\notag
\label{star1}
&\int_0^\infty ( S_{L_1}(\tau)L_1\partial^\alpha \g(t), S_{L_1} (\tau)  \partial^\alpha \g(t) )_{L^2_x L^2_v} d\tau 
\\
=&\notag\frac 12\int_0^\infty \frac {d} {d\tau} \Vert S_{L_1}(\tau)  \partial^\alpha  \g  (t)\Vert_{L^2_x L^2_v}^2 d\tau 
\\
=&\notag \lim_{\tau \to \infty}\frac 12\Vert S_{L_1}(\tau)  \partial^\alpha \g(t)\Vert_{L^2_xL^2_v}^2  - \frac 12\Vert  \partial^\alpha \g (t)\Vert_{L^2_xL^2_v}^2 
\\
\le& \notag\lim_{\tau \to \infty}  C(1+\tau)^{-1}  \Vert \g(t ) \Vert_{H^7_xL^2_{4}}^2 -\frac 12\Vert \partial^\alpha \g (t)\Vert_{L^2_xL^2_v}^2.
\\
\le & \lim_{\tau \to \infty}  C(1+\tau)^{-1} e^{Ct}  \Vert   \g(0 ) \Vert_{H^7_xL^2_{4}}^2 -\frac 12\Vert \partial^\alpha \g (t)\Vert_{L^2_xL^2_v}^2 = -\frac 12\Vert   \partial^\alpha \g(t) \Vert_{L^2_xL^2_v}^2, 
\end{align}
where we used the large-time behavior \eqref{semidecay}. 
For $2\le K\le 6$, under the definition of $T^*$, we can deduce similarly as \eqref{eqe4} that 
\begin{equation}
\label{eqe3}
\begin{aligned}
\partial_t E_{K, k }(t)  + k\log 2 (1+t)^{-\frac 32}  E^f_{K, k} (t)+   \lambda D_{K, k}(t)   \le  &\,C_k \Vert [E, B]\Vert_{H^K_x} \Vert f \Vert_{H^K_x}  + \sqrt{E_{2, k}(t) }  E^f_{K, k}(t)
\\
&+\sqrt{E_{K, k} (t) }  {D_{K, k} (t)} + C \Vert f \Vert_{H^K_{x, v}}^2 , 
\end{aligned}
\end{equation}
remind the \emph{a priori} assumption \eqref{lifespan}. Similarly as the argument for $K=7$, we obtain that for any $K=2,3,4,5,6$,
\begin{equation}
\label{star2}
\partial_t E_{K, k }(f)   + \lambda D_{K, k}(f) \lesssim   \Vert [E, B]\Vert_{H^K_x} \Vert f \Vert_{H^K_x} +  \Vert f \Vert_{H^K_{x, v}}^2. 
\end{equation}
morevoer, for the term $\Vert f \Vert_{H^K_{x, v}}^2$, notice that for any $\alpha, \beta$ with $ |\beta| \ge 1, |\alpha|+|\beta|\le K$, and any $\eta>0$ small, by interpolation we have
\[
\Vert  \partial^\alpha_\beta  f \Vert_{L^2_{x, v}}^2 \le C \Vert \partial^\alpha f\Vert_{L^2_xH^\beta_v}^2 \le\eta  \Vert \partial^\alpha f \Vert_{L^2_xH^{\beta+s}_v}^2  + C_\eta \Vert \partial^\alpha f \Vert_{L^{2}_{x, v}}^2  \le \eta  D_{K, k} (f)+ C_{\eta} \Vert \partial^\alpha f \Vert_{L^{2}_{x, v}}^2,
\]
which implies that 
\[
C_k\Vert f \Vert_{H^K_{x, v}}^2  \le \frac {\lambda}  4  D_{K, k}(f) + C_k \Vert f \Vert_{H^K_xL^2_v}^2. 
\]
Thus for $2\le K\le 6$ we have
\begin{align}\label{star33}
\partial_t E_{K, k }(f)   + \lambda D_{K, k}(f) \lesssim  & \Vert [E, B]\Vert_{H^K_x}^2 +  \Vert f \Vert_{H^K_xL^2_v}^2. 
\end{align}
Therefore, if $\g(t)=(f_+(t),f_-(t), E(t), B(t))$ is the solution to \eqref{VMB3} and \eqref{VMB4}, we define the norm ${\vertiii}\cdot{\vertiii}_{K,k}$  for $K\ge 6$ by 
\begin{equation}
\label{revisednorm}
{\vertiii}g{\vertiii}_{K, k}^2:=E^{\bm g}_{K, k}(t) + \eta \int_{0}^\infty (S_{L_1} (\tau)\bm g, S_{L_1}(\tau)\bm g )_{H^{K}_xL^2_v}  d \tau.
\end{equation}
From Lemma \ref{lemsemigroup}, we have
\begin{equation*}
\begin{aligned}
&\int_{0}^\infty (S_{L_1} (\tau) \g(t), S_{L_1}(\tau) \g(t) )_{H^{K}_xL^2_v}  d \tau d\tau \lesssim   \Vert    \g(t) \Vert_{H^{K+1}_xL^2_{4}}^2   \int_0^\infty C(1+\tau)^{-2}d\tau  \lesssim \Vert    \g(t) \Vert_{H^{K+1}_xL^2_{4}}^2
\end{aligned}
\end{equation*}
Thus, the norm ${\vertiii}g{\vertiii}^2_{K,k}$ may not be equivalent to $E_{{K+1}, k}(t)$ because of the loss of one-order derivative, but it is larger than  $E_{K, k} (t)$. 
Recall from \eqref{equationmacro01} and \eqref{L1} that the solution $\g$ to VMB/VML system \eqref{VMB3} and \eqref{VMB4} can be written as the solution to 
\[
\partial_t \g  =L_1 \g +N\g,\quad \nabla_x \cdot E = \int_{\R^3} \big(f_+(v)-f_-(v)\big)\,dv   , \quad  \nabla_x \cdot B =0, \quad \g(0)=\g_0, 
\]
where $N(\g) = ((E + v \times B) \cdot \nabla_v f +Q(f_+ , f_+)+Q(f_-,f_+),\,-(E + v \times B) \cdot \nabla_v f +Q(f_- , f_-)+Q(f_+,f_-), 0, 0)$.
Note that $N(\g)_{E,B}=(0,0)$.
Thus, if $s \ge \frac 1 2$, we can easily compute that for any $\eta>0$,
\begin{align}\notag
\label{star3}
 \int_{0}^\infty (S_{L_1} (\tau) N \g, S_{L_1}(\tau) \g )_{H^K_xL^2_v}  d \tau 
&=  \int_{0}^\infty ((S_{L_1} (\tau) N g)_f, (S_{L_1}(\tau) g)_f )_{H^K_xL^2_v}  d \tau
\\(\text{because of}\,\, \eqref{equationl10})
&\notag\lesssim \Vert N g \Vert_{H^K_xH^{-s}_{10-\gamma/2}}\Vert f\Vert_{H^K_xH^{-s}_{ 10-\gamma/2}}
\\
&\notag\lesssim  \Vert (E + v \times B) \cdot \nabla_v f +Q(f, f) \Vert_{H^K_xH^{-s}_{10-\gamma/2}}\Vert f \Vert_{H^K_xL^{2}_{ 12}}
\\(\text{because of}\,\, \eqref{equationbols1})
&\notag\lesssim  \Vert f \Vert_{H^K_xL^{2}_{ 12}} \Vert f \Vert_{H^6_x L^{2}_{12}}( \Vert f \Vert_{H^K_xH^{s}_{12}}+  \Vert E \Vert_{H^K_x})
\\
&\lesssim \frac 1 {2 \eta} \lambda D_{K, k} + C \eta E_{6, k}( \Vert [E, B]\Vert_{H^K_x}^2 +  \Vert f \Vert_{H^K_xL^2_v}^2  ). 
\end{align}
The above proof relies on the crucial observation that the coordinates $E$ and $B$ of $N(g)$ are both set to 0. Since the dissipation is exclusively in $E$ and $B$ and not in $f$, there is no loss in the $x$ derivatives for $f$. This feature greatly simplifies the proof. So we deduce from \eqref{star1}, \eqref{star33}, \eqref{star3} that 
\[
\frac12\frac {d} {dt} {\vertiii}\g{\vertiii}_{K, k}^2   +\frac \lambda 2  D_{K, k} (f)  +\left(\frac \eta 2 -C - \eta^2 C E_{6, k}\right) (\Vert [E, B]\Vert_{H^K_x}^2 +  \Vert f \Vert_{H^K_xL^2_v}^2 ) \le 0. 
\]
Therefore, if we take $\eta\gg C$ large enough and $E_{6, k} \ll\frac 1{C^3}$ small enough, there exists $\lambda_0>0$, such that $\frac \eta 2 -C - \eta^2 C E_{6, k}\ge \lambda_0$.  Then we have that for $2\le K\le 6$,
\begin{equation}
\label{convergencestar}
\frac {d} {dt} {\vertiii}\g{\vertiii}_{K, k}^2  + \frac {\lambda_0} 2 G_{K, k} \le 0. 
\end{equation}
Choose $K=6$ in \eqref{convergencestar}, we have
$\frac {d} {dt} {\vertiii}\g{\vertiii}_{6, k}^2 \le 0$.
This implies that for any $t\in [0,T^*]$, we have
\[
E_{6, k}(t)   \le {\vertiii} \g{\vertiii}^2   \le E_{6, k}(0) + \eta \int_{0}^\infty (S_{L_1} (\tau) \g_0, S_{L_1}(\tau) \g_0 )_{H^6_xL^2_v}  d \tau \lesssim E_{7, k}(0).    
\]
Thus if we assume the initial data $E_{7, k}(0)  \le \epsilon_0 $ is small, we are able to close the first argument $E_{6, k } (t) \le \epsilon/2$ in the \emph{a priori} assumption \eqref{lifespan}. 

\smallskip 
Finally, we come to prove the convergence. For any $k\ge \frac 92\ge- \frac{3\gamma}{2}$, Take $K=2$ in \eqref{convergencestar}, we obtain
\begin{equation}
\label{conve1}
\frac {d} {dt}    {\vertiii}f{\vertiii}_{2,k-\frac 92}^2    +\frac {\lambda_0} 2 G_{2,   k -\frac 92} (f)    \le 0. 
\end{equation}
Next, from the Gagliardo-Nirenberg interpolation inequality, we have 
\begin{equation}
\label{conv1}
\Vert [E,B] \Vert_{H^3_x} \lesssim \Vert [E,B] \Vert ^\frac 1  4_{H^6_x}    \Vert [E,B] \Vert^\frac 3   4_{H^2_x}    \le E_{6}^{\frac 1 4}   D_{2}^ {\frac 3 4}. 
\end{equation}
Since $ k \ge \frac 92\ge\frac{3\gamma}{2}$, we obtain from interpolation that 
$\Vert  f \Vert_{L^2_xL^2_v} \lesssim \Vert  \langle v \rangle^{k} f \Vert_{L^2_xL^2_v} ^{\frac 1 4}    \Vert  \langle v \rangle^{\gamma/2} f \Vert_{L^2_xL^2_v} ^{\frac 3 4} $. Next, using the \emph{a priori} assumption $E_{6, k}(f)\le \varepsilon$, we obtain from \eqref{conv1} that
\[{\vertiii}f{\vertiii}_{2,k-\frac 92}^2  \lesssim  E_{3,k-\frac 92}(f) \lesssim E_{6, k}^{\frac 1 4} (f) G_{2,k-\frac 92}^{\frac 34}(f) \lesssim  G_{2,k-\frac 92}^{\frac 34}(f) .\] Together with \eqref{conve1}, and recalling that 
$E_{6, k}(f)\le \varepsilon$, there exists a constant $\lambda_1>0$ such that
$$\frac{d}{dt}  {\vertiii}f{\vertiii}_{2,k-\frac 92}^2    +\lambda_1  {\vertiii}f{\vertiii}_{2,k-3}^\frac 83 \le 0.$$
Since $E_{5, k} (f_0) \le \epsilon_0$, solving this ODE, we finally obtain that $$E_{2, k-\frac 92}(f)  \le {\vertiii}f{\vertiii}_{2,k-\frac 92}^2   \lesssim  \epsilon_0 (1+t)^{-3}.$$ 
This finally closes the \emph{a priori} assumption and implies 
that the lifespan time $T^*=\infty$. This completes the proof of Theorem \ref{maintheorem}.
\end{proof}

\medskip \noindent
{\bf Acknowledgments.} C. Cao is supported by grants from Department of applied mathematics, The Hong Kong Polytechnic University. D.-Q. Deng was partially supported by the National Research Foundation of Korea (NRF) grant funded by the Korea government (MSIT) No. RS-2023-00210484 and No. RS-2023-00212304. X.-Y. Li has been partially supported by the Basque Government through the BERC 2022- 2025 program and by the Spanish State Research Agency through BCAM Severo Ochoa excellence accreditation SEV-2017-0718 and through project PID2020-114189RB-I00 funded by Agencia Estatal de Investigaci´on (PID2020- 114189RB-I00 / AEI / 10.13039/501100011033).  He has also been supported by IMT (Institut de Mathematiqu\'es), Universit\'e de Toulouse III Paul Sabatier.\\


\bibliography{bibtex}
\bibliographystyle{amsplain}




\end{document}